\documentclass[12pt]{article}

\usepackage[utf8]{inputenc}
\usepackage{amsfonts,amssymb,amsmath,amsthm,
  mathtools}
\usepackage[english]{babel}

\usepackage{authblk}
\usepackage[mathscr]{euscript}
\usepackage{fullpage}

\numberwithin{equation}{section}

\theoremstyle{plain}
\newtheorem{theorem}{Theorem}[section]
\newtheorem{lemma}[theorem]{Lemma}
\newtheorem{proposition}[theorem]{Proposition}
\newtheorem{corollary}[theorem]{Corollary}
\newtheorem{conjecture}[theorem]{Conjecture}

\theoremstyle{definition}
\newtheorem{definition}[theorem]{Definition}
\newtheorem{example}[theorem]{Example}
\newtheorem{remark}[theorem]{Remark}

\newcommand{\Kd}{\mathscr K^d}
\newcommand{\Ktwo}{\mathscr K^2}

\newcommand{\Rd}{\mathbb R^d}

\newcommand{\R}{\mathbb R}

\newcommand{\sC}{\mathscr C}
\newcommand{\sD}{\mathcal D}
\newcommand{\sB}{\mathcal B}
\newcommand{\sY}{\mathcal Y}
\newcommand{\sI}{\mathcal I}
\newcommand{\sJ}{\mathcal J}

\newcommand{\sM}{\mathcal M}
\newcommand{\sZ}{\mathcal Z}
\newcommand{\sX}{\mathcal X}

\renewcommand{\P}{\mathbf P}
\newcommand{\salg}{\mathfrak{F}}

\newcommand{\1}{\mathbf 1}
\newcommand{\eps}{\varepsilon}

\newcommand{\Hr}{\mathop{\overset{\smash{\lower0.3ex\hbox{$\,\,
          \scriptstyle\circ$}}}{H\!}}{}}
\renewcommand{\phi}{\varphi}

\renewcommand{\emptyset}{\varnothing}
\newcommand{\dH}{\mathsf{d}_{\mathrm{H}}}

\newcommand{\Prob}[1]{\mathbf{P}\left\{#1\right\}}
\newcommand{\E}{\mathbf{E}}

\DeclareMathOperator{\conv}{conv}
\DeclareMathOperator{\essinf}{ess\,inf}

\begin{document}

\title{Random valuations}

\author[1,2]{Andrii Ilienko}
\author[1]{Ilya Molchanov}
\author[1]{Tommaso Vison\`a}

\affil[1]{University of Bern}
\affil[2]{Igor Sikorsky Kyiv Polytechnic Institute}

\maketitle

\begin{abstract}
  A valuation is a finitely additive function on the family of compact
  convex sets in $\mathbb{R}^d$. We study non-negative infinitely
  divisible random valuations, with particular emphasis on monotone,
  $\sigma$-continuous models with independent increments along nested
  families. After separating the deterministic part, we show that the
  L\'evy measure of such a valuation is generated by pairs $(F,r)$,
  where $F$ is a non-empty closed convex set and $r>0$, with each pair
  contributing $r\mathbf{1}_{F\cap K\neq\emptyset}$. This yields a Poisson
  representation and an equivalent formulation through a pure-jump
  completely random measure on the space of closed convex sets. For
  stationary valuations, we derive a cylinder-Grassmannian
  representation of the L\'evy measure. In the stationary isotropic
  case, we obtain a McMullen-type decomposition, at the level of
  one-dimensional distributions, into independent components stable
  under dilation of the argument.
\end{abstract}

\noindent \textbf{Keywords:} random valuation, stationarity,
set-indexed process, independent increments, L\'evy measure,
Grassmannian, completely random measure
\\[6pt]
\textbf{MSC2020:} 52A10 60D05 60G51 60G57

\section{Introduction}
\label{sec:introduction}

Let $\Kd$ be the family of convex bodies (i.e., compact convex sets)
in $\R^d$. While the empty set is typically not considered a convex
body, we adopt the convention that it is included in $\Kd$. A
\emph{valuation} $\phi$ is an additive map from $\Kd$ to an abelian
semigroup. Additivity means that
\begin{equation}
  \label{eq:additive}
  \phi(K \cup L) + \phi(K \cap L) = \phi(K) + \phi(L)
\end{equation}
for all compact convex sets $K$ and $L$ such that $K\cup L$ is also
convex, see \cite{MR3820854} and \cite[Chapter~6]{S14}. Additionally,
we will always assume that $\phi(\emptyset)=0$. While, in general, the
values of valuations may belong to any semigroup, most of the
literature on valuations focuses on valuations with values in the set
of real or complex numbers or in the family of compact convex sets
equipped with Minkowski addition. We consider only real-valued
valuations $\phi:\Kd\to\R$.

While valuations share with measures the finite additivity property,
their domain of definition is not an algebra. The additivity property
restricted to the case when $K\cup L$ is convex makes the study of
valuations complicated and causes a number of problems in their
characterisation. For instance, unlike measures, a non-negative
valuation is not necessarily monotone and there is no analogue of the
Jordan decomposition for signed valuations. Some valuations, such as
perimeter or surface area, are given by the Hausdorff measures of the
boundaries of sets. However, many valuations are far from being
measures, most importantly valuations given by $\phi(K)=V_d(K+A)$,
where $V_d$ is the volume and $K+A=\{x+y: x\in K, y\in A\}$ is the
Minkowski sum of $K$ and a fixed compact convex set $A$.

A common assumption in the theory of valuations is their invariance
under a group of transformations of $\R^d$. In most cases, valuations
are assumed to be translation invariant, meaning that
$\phi(K + x) = \phi(K)$ for all $x\in\R^d$, see
\cite{MR3820854}. While the family of translation-invariant valuations
is still not completely described, adding the rotation invariance
assump\-tion results in valuations invariant under all rigid motions,
which are well understood. By Hadwiger's theorem
\cite[Theorem~6.4.14]{S14}, any real-valued valuation on $\Kd$ that is
continuous in the Hausdorff metric and invariant under rigid motions
can be expressed as a weighted sum of the intrinsic volumes $V_i(K)$,
$i=0,\dots,d$. We refer to \cite{S14} for the definition of intrinsic
volumes and recall that $V_d$ is the Lebesgue measure, $V_{d-1}$ is
one half of the surface area, and $V_0(K)$ is the indicator of
$K\neq\emptyset$.

Each measure naturally defines a valuation by restricting its domain
to $\Kd$.  Random measures are the most important example of random
set functions, and the theory of random measures is rich and useful in
various applications, see \cite{kalle17}.

Random measures are additive processes indexed by (not necessarily
convex) sets.  An important family of random measures arises in the
theory of (weighted) empirical measures constructed as sums of weights
attached to locations in the space, see \cite{MR4628026} and recently
in \cite{MR4816007}. Particular problems which arise when the weights
are signed have been addressed in \cite{bass:pyke84}. Gaussian
measures on the family of convex sets have been considered in
\cite{MR346884} and \cite{MR537695}.

Dropping the additivity requirement of a set-indexed process results in
non-additive measures and capacities, which are also widely used in
mathematics, e.g., in large deviations, cooperative games, and
mathematical economics. Random non-additive measures 
appear in relation to the theory of extreme values, see
\cite{nor86b,ver97}.  Set-indexed martingales have been studied in
\cite{MR1733295}, without assuming any additivity.

In this paper we study \emph{random valuations}. In view of what has
been mentioned, one can regard them as sandwiched between random
measures and general set-indexed processes. 
From a pure mathematical perspective, random valuations can be
considered as valuations taking values in the additive group of random
variables. However, some probabilistic assump\-tions imposed later in
this paper become far more transparent if random valuations are
treated as stochastic processes on the space of convex bodies. 

Imposing invariance assumptions on random valuations \emph{pathwise}
makes it possible to utilise available representations of
deterministic valuations. In contrast, we study random valuations
without imposing any invariance on their paths.
Instead, we occasionally impose stationarity and isotropy in
distribution. These assumptions do not ensure the 
invariance of individual realisations and therefore do not permit the
direct application of deterministic characterisation results.

The family of valuations not satisfying any pathwise invariance
assumptions is very rich and is presently understood only in two
cases: valuations on the line and $\sigma$-continuous monotone
integer-valued valuations on the plane, see \cite{ilien:mol:vis25}.
Recall that $\phi$ is said to be $\sigma$-continuous if
$\phi(K_n) \to \phi(K)$ whenever $K_n \downarrow K$, see
\cite[Section~6.2]{S14}.

We recall a conjecture made in \cite{ilien:mol:vis25} concerning
deterministic valuations; it is supported by its version valid for
integer-valued $\sigma$-continuous monotone valuations on the plane.
A measure $\mu$ on $\sC$ is said to be \emph{locally finite} if $\mu$
has finite total variation on the families
\begin{equation}
  \label{eq:sCK}
  \sC_K=\{F\in\sC\colon F\cap K\ne\emptyset\}
\end{equation}
for all compact sets $K$. The valuation $\phi$ is said to be locally
finite if $|\phi(K)|$, $K\subset W$, $K\in\Kd$, is bounded for all
compact $W$.

\begin{conjecture}
  \label{conjecture:rep}
  Each locally finite $\sigma$-continuous valuation $\phi$ is
  representable as
  \begin{equation}
    \label{eq:3}
    \phi(K)=\mu(\sC_K),\quad K\in\Kd,
  \end{equation}
  for a locally finite signed measure $\mu$ on $\sC$.
\end{conjecture}

It should be noted that such a representation of a monotone $\phi$ is
not necessarily unique and may involve a signed measure on $\sC$, as
examples in \cite{ilien:mol:vis25} show. 

While a description of general random valuations is surely out of
reach, this paper studies infinitely divisible random valuations and,
in particular, those which have the property of independent
increments. It is important to stress that the independence of
increments differs from assuming that the values on disjoint sets are
independent --- we prove that the latter assumption singles out
valuations which are completely random measures. In
this connection, note that set-indexed L\'evy processes were studied in
\cite{bass:pyke84} and further in \cite{he:me13}, assuming that the
values on disjoint sets are independent.

We show that each monotone infinitely divisible random valuation
with independent increments and no Gaussian component admits a
representation in terms of a Poisson process on the product of the
family $\sC$ of non-empty closed convex sets and the positive real
half-line. 
In this way we settle a random variant of our conjecture,
providing a complete characterisation of random valuations which admit
representation \eqref{eq:3} with $\mu$ being a completely random measure on $\sC$. The
main reason why the random variant holds is the fact that the sum of
two independent random variables is not zero a.s., unless these two
random variables are degenerate.

Particular attention in this paper is devoted to \emph{stationary} random
valuations which are infinitely divisible, monotone, and have
independent increments.  While the additivity property is imposed
almost surely, stationarity is understood for distributions. It is
shown that such stationary random valuations have a representation in
terms of measures on the products of Grassmannians of different orders
and the real half-line.  McMullen’s decomposition theorem (see, e.g.,
Theorem~6.3.5 in \cite{S14}) states that every continuous
translation-invariant valuation admits a unique decomposition into the
sum of valuations homogeneous of degrees $i=0,\dots,d$.  In the random
setting, the homogeneity property is replaced by stability. We prove a
variant of McMullen's theorem for isotropic random valuations, showing
that each monotone $\sigma$-continuous
stationary isotropic infinitely divisible random valuation without Gaussian
component and vanishing deterministic part has the same one-dimensional distributions as the sum of
stable random valuations.

The paper is organised as follows. Section~\ref{sec:basic-prop-rand}
addresses general properties of additive stochastic processes on
$\Kd$, most importantly, their separability. Section~\ref{sec:supp-infin-divis} introduces the
infinite divisibility property of random valuations. The existence of
the corresponding L\'evy measure is derived by applying results from 
\cite{MR3857855} obtained for stochastic processes indexed by
arbitrary sets. The support of the L\'evy measure is crucial to
construct infinitely divisible valuations. It is shown in 
Section~\ref{sec:indep-incr} that assuming independence of increments
we can deduce that the L\'evy measure is supported by two-valued
valuations. 

While the family of two-valued valuations is still very large,
Section~\ref{sec:monot-id-valu} invokes the $\sigma$-continuity and
monotonicity assumptions to show that the L\'evy measure is supported
by valuations of the type $\psi(K)=r\1_{F\cap K\neq\emptyset}$ for
$r>0$ and closed convex sets $F$. This provides a general series
representation of $\sigma$-continuous monotone infinitely divisible
valuations with independent increments.

Section~\ref{sec:homogeneity} addresses stability properties of random
valuations, noting that the scaling can be applied either to the
values or to the argument of the valuation. Assuming 
stationarity, it is possible to further specify the representation of
random valuations as done in Section~\ref{sec:stat-rand-valu}.
Finally, Section~\ref{sec:stationary-stable-id} provides a random
variant of McMullen's theorem. 

\section{Basic properties of random valuations}
\label{sec:basic-prop-rand}

Consider the space $\Kd$ of compact convex sets in $\R^d$ (including
the empty set) equipped with the (extended) Hausdorff metric $\dH$
(assuming that $\dH(\emptyset,K)=\infty$ for each non-empty $K$).  Two
sets $K,L\in\Kd$ are said to be \emph{admissible} if their union is
convex. Let $\R^{\Kd}$ denote the space of all functions
$\phi:\Kd\to\R$, and let $\sB^{\Kd}$ be its cylindrical
$\sigma$-algebra. We fix a complete probability space
$(\Omega,\salg,\P)$.

\begin{definition}
  A \emph{random valuation} $\Phi$ is a stochastic process on $\Kd$
  such that $\Phi(\emptyset)=0$ a.s.\ and, for each fixed admissible
  $K,L\in\Kd$, we have
  \begin{displaymath}
    \Phi(K\cup L)+\Phi(K\cap L)=\Phi(K)+\Phi(L)\quad \text{a.s.}
  \end{displaymath}
  The distributions of $(\Phi(K_1),\dots,\Phi(K_n))$ for $n\ge1$,
  $K_1,\dots,K_n\in \Kd$, are called the finite-dimensional
  distributions of $\Phi$.
\end{definition}

A random valuation $\Phi$ is said to be \emph{separable} if there
exist a countable family $\sD\subset\Kd$ and a set
$\Omega_0\subset\Omega$ of full probability such that, for all
$K,L\in\Kd$ with $K\cup L\in\Kd$, there exist sequences $(K_n)$ and
$(L_n)$ from $\sD$ such that $K_n\cup L_n$ is convex,
$K_n\downarrow K$ and $L_n\downarrow L$, and $\Phi(K_n)\to \Phi(K)$,
$\Phi(L_n)\to\Phi(L)$, $\Phi(K_n\cup L_n)\to\Phi(K\cup L)$ and
  $\Phi(K_n\cap L_n)\to \Phi(K\cap L)$ as
$n\to\infty$ for all $\omega\in\Omega_0$.
The separability property implies
\emph{separability in probability}, obtained by replacing the
convergence for all $\omega\in\Omega_0$ with convergence in
probability.

\begin{lemma}
  \label{lemma:additive-functions}
  If a random valuation $\Phi$ is separable, then the realisations of
  $\Phi$ almost surely belong to the family of additive functions on
  $\Kd$.
\end{lemma}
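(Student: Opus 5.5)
The plan is to pass from the countable family $\sD$ to all of $\Kd$ by a limiting argument along sequences supplied by separability. Let $\sD$ and $\Omega_0$ be as in the definition of separability. First I enlarge the exceptional set by countably many null sets. For every pair $K',L'\in\sD$ with $K'\cup L'$ convex, the additivity identity
\begin{displaymath}
  \Phi(K'\cup L')+\Phi(K'\cap L')=\Phi(K')+\Phi(L')
\end{displaymath}
holds almost surely. There are only countably many such pairs, so the identity holds simultaneously for all of them on an event $\Omega_1$ of full probability. I also add the null set where $\Phi(\emptyset)\neq0$. Put $\Omega_2=\Omega_0\cap\Omega_1\cap\{\Phi(\emptyset)=0\}$; then $\Prob{\Omega_2}=1$, which uses the completeness of $(\Omega,\salg,\P)$ where needed.

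Now fix $\omega\in\Omega_2$ and an arbitrary admissible pair $K,L\in\Kd$. Separability gives sequences $K_n,L_n\in\sD$ with $K_n\cup L_n$ convex, $K_n\downarrow K$ and $L_n\downarrow L$. Along these sequences $\Phi(K_n)\to\Phi(K)$, $\Phi(L_n)\to\Phi(L)$, $\Phi(K_n\cup L_n)\to\Phi(K\cup L)$ and $\Phi(K_n\cap L_n)\to\Phi(K\cap L)$, and this holds at the fixed $\omega$ because $\omega\in\Omega_0$.

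The additivity identity holds for each pair $(K_n,L_n)$, but only if the sets $K_n\cup L_n$ and $K_n\cap L_n$ are covered by the countable exceptional-set argument. $K_n\cup L_n$ is convex by assumption. $K_n\cap L_n$ is convex automatically, possibly empty, in which case the value is $0$ on $\Omega_2$. However, $K_n\cup L_n$ and $K_n\cap L_n$ need not lie in $\sD$. This does not matter: the a.s.\ identity for the fixed pair $(K_n,L_n)$ involves four fixed sets, and there are countably many pairs. So I define $\Omega_1$ via all pairs from $\sD$ with convex union, regardless of whether their union and intersection belong to $\sD$. Passing to the limit in the identity for $(K_n,L_n)$ then gives additivity for $(K,L)$ at $\omega$. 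Since $\omega\in\Omega_2$ and the admissible pair were arbitrary, every realisation in $\Omega_2$ is an additive function on $\Kd$ vanishing at $\emptyset$.

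The step needing the most care is the order of quantifiers. The event $\Omega_2$ must be chosen before the arbitrary pair $(K,L)$, and that is exactly why the definition of separability requires the approximating sequences to work on the single set $\Omega_0$ for all $K,L$. The only other point to check is that the value $\Phi(K_n\cap L_n)$ appearing in the approximating identities is the same one controlled by the separability convergence. It is, because separability explicitly includes convergence of $\Phi(K_n\cap L_n)$ and $\Phi(K_n\cup L_n)$.
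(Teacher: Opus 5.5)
Your proof is correct and follows the same route as the paper. Both arguments fix one full-probability event on which additivity holds simultaneously for the countably many admissible pairs from $\sD$, and then use the separability sequences, valid on the common event $\Omega_0$, to pass to the limit for arbitrary admissible $K,L$. Your version spells out the limiting step and the order of quantifiers, which the paper compresses into ``the supremum may be reduced to one over $\sD$''.
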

\begin{proof}
  Note that $\Phi$ belongs to the family of
  additive functions if and only if
  \begin{displaymath}
    \sup_{K,L\in\Kd,K\cup L\in \Kd}
    |\Phi(K)+\Phi(L)-\Phi(K\cap L)-\Phi(K\cup L)|=0 \quad \text{a.s.}
  \end{displaymath}
  By the separability property, the supremum may be reduced to one
  over $K$ and $L$ from a countable family $\sD$.  Then use the fact
  that the expression under the supremum vanishes a.s.\ for all
  admissible $K$ and $L$ from $\sD$.
\end{proof}

Recall that there are two basic notions of continuity for
a deterministic valuation~$\phi$:
\begin{enumerate}
\item[(C1)] continuity in the Hausdorff metric;
\item[(C2)] $\sigma$-continuity, meaning that $\phi(K_n)\to\phi(K)$ whenever
  $K_n\downarrow K$. 
\end{enumerate}
It is clear that (C1) implies (C2). Furthermore, if $\phi$ is
monotone, then $\sigma$-continuity is equivalent to the upper
semicontinuity of~$\phi$.

\begin{lemma}
  If $\Phi$ is a separable random valuation, then
  $C_\Phi=\{\Phi\text{ is continuous}\}$ is an $\salg$-measurable event.
\end{lemma}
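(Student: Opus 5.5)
We plan to write the event $C_\Phi$ through countably many operations on the random variables $\Phi(D)$, $D\in\sD$, where $\sD$ is the countable family from the separability property. Continuity here means continuity in the Hausdorff metric (C1). The approach is to pass to the version of $\Phi$ determined by its values on $\sD$ and to express uniform continuity on compact pieces of $\Kd$ through moduli of continuity computed over $\sD$.

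\textbf{Step 1: reduction to $\sD$.} On $\Omega_0$, separability gives $\Phi(K)=\lim\Phi(K_n)$ for some sequence $K_n\downarrow K$ from $\sD$. Taking $L=K$ and $K_n=L_n$ is allowed, since $K\cup K$ is convex. Since $K_n\downarrow K$ implies $\dH(K_n,K)\to0$, the whole path on $\Omega_0$ is a limit of values on $\sD$ along Hausdorff-convergent sequences. Continuity of a function on $\Kd$ is equivalent to its uniform continuity on each compact subfamily
\begin{displaymath}
  \mathscr K_m=\{K\in\Kd\colon K\subset B_m\}\setminus\{\emptyset\}.
\end{displaymath}
These families are compact by the Blaschke selection theorem, and $\emptyset$ is an isolated point, so it can be ignored. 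For each rational $\delta>0$ we define
\begin{displaymath}
  w_m(\delta)=\sup\bigl\{|\Phi(D)-\Phi(D')|\colon D,D'\in\sD\cap\mathscr K_{m+1},\ \dH(D,D')<\delta\bigr\}.
\end{displaymath}
This is a countable supremum, hence $\salg$-measurable. We then claim that
\begin{displaymath}
  C_\Phi\cap\Omega_0=\Omega_0\cap\bigcap_{m\ge1}\Bigl\{\lim_{\delta\downarrow0,\,\delta\in\mathbb Q}w_m(\delta)=0\Bigr\}.
\end{displaymath}
Since $\P$ is complete and $\Omega_0$ has full probability, this identity gives measurability of $C_\Phi$.

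\textbf{Step 2: proving the identity.} For ``$\supseteq$'', take $K,K'\in\mathscr K_m$ with $\dH(K,K')<\delta/3$ and approximating $K_n\downarrow K$, $K'_n\downarrow K'$ from $\sD$. For large $n$ these lie in $\mathscr K_{m+1}$ and are within $\delta$ of each other. Passing to the limit gives $|\Phi(K)-\Phi(K')|\le w_m(\delta)$, which yields uniform continuity on $\mathscr K_m$. For ``$\subseteq$'', continuity of $\Phi$ on the compact family $\mathscr K_{m+1}$ gives uniform continuity there, and hence $w_m(\delta)\to0$.

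\textbf{Main obstacle.} The delicate point is the inclusion ``$\subseteq$''. The supremum in $w_m(\delta)$ only involves values on $\sD$, so a continuous path makes it small. However, $C_\Phi$ is a statement about the actual realisation, and outside $\Omega_0$ that realisation is not controlled by $\sD$. This is why the identity is stated only on $\Omega_0$, with the complement absorbed as a null set using completeness. A secondary point is that the approximations in separability are monotone, $K_n\downarrow K$, which ensures they stay inside a slightly larger ball. We therefore use $\mathscr K_{m+1}$ rather than $\mathscr K_m$ in the definition of $w_m$.
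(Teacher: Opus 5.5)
Your proposal is correct and takes essentially the same route as the paper. Both proofs express continuity on $\Omega_0$ as local uniform continuity over the countable family $\sD$ (your moduli $w_m(\delta)$ encode the paper's event $\tilde C_\Phi$). Both use compactness of bounded families of convex bodies for one inclusion and the separability approximations $K_n\downarrow K$ for the other, and then invoke completeness to absorb $\Omega\setminus\Omega_0$. Your bound $|\Phi(K)-\Phi(K')|\le w_m(\delta)$ is a slightly cleaner version of the paper's triangle-inequality step.
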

\begin{proof}
  Let $\tilde C_\Phi$ denote the event that $\Phi$ is locally
  uniformly continuous on $\sD$:
  \begin{displaymath}
    \tilde C_\Phi=\bigcap_{r\ge1}\bigcap_{m\ge1}\bigcup_{n\ge1}
    \bigcap_{\substack{K,L\in\sD,
        K,L\subset B_r,\dH(K,L)\le n^{-1}}}
    \bigl\{|\Phi(L)-\Phi(K)|\le m^{-1}\bigr\},
  \end{displaymath}
  where $B_r$ is the Euclidean ball of radius $r$ centred at the
  origin.  Since all intersections and unions are countable, we have
  $\tilde C_\Phi\in\salg$.

  To prove the lemma, it suffices to show that
  $\Omega_0\cap \tilde C_\Phi\subseteq \Omega_0\cap C_\Phi$. The reverse
  inclusion follows from the local compactness of $\Kd$ with respect
  to the Hausdorff metric: any continuous function on $\Kd$ is locally
  uniformly continuous there, hence, also on $\sD$.  To prove the
  direct inclusion, let $K_n\to K$. By the definition of separability,
  choose $K^i,K_n^i\in\sD$ with $K^i\downarrow K$ and $K_n^i\downarrow K_n$ such
  that $\Phi(K^i)\to\Phi(K)$ and $\Phi(K_n^i)\to\Phi(K_n)$ for all
  $\omega\in\Omega_0$. Hence,
  \begin{displaymath}
    |\Phi(K_n)-\Phi(K)|\le|\Phi(K_n)-\Phi(K_n^i)|
    +|\Phi(K_n^i)-\Phi(K^i)|+|\Phi(K^i)-\Phi(K)|,
  \end{displaymath}
  and, therefore,
  \begin{align*}
    \limsup_{n\to\infty}|\Phi(K_n)-\Phi(K)|
    =\limsup_{n\to\infty}\limsup_{i\to\infty}|\Phi(K_n)-\Phi(K)|
    \le \limsup_{n\to\infty}\limsup_{i\to\infty}|\Phi(K_n^i)-\Phi(K^i)|,
  \end{align*}
  which vanishes by the local uniform continuity of $\Phi$ on $\sD$.
  Since $\widetilde C_\Phi\in\salg$, the probability space is
  complete, and $\Omega\setminus\Omega_0$ is a null event, it follows
  that $C_\Phi\in\salg$.
\end{proof}

The following result implies that an a.s.\ $\sigma$-continuous random
valuation is separable.

\begin{lemma}
  \label{lemma:separable}
  Each random valuation whose realisations are almost surely
  $\sigma$-continuous is separable.
\end{lemma}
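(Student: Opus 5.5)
The plan is to construct an explicit countable family $\sD$ and to obtain the required approximating sequences deterministically, so that the convergence statements follow directly from $\sigma$-continuity of the paths. Let $\Omega_0$ be the full-probability event on which the realisation of $\Phi$ is $\sigma$-continuous. For $\sD$ take the family of all convex polytopes whose vertices have rational coordinates, together with $\emptyset$; this family is countable. The task is then purely geometric: for admissible $K,L\in\Kd$, find $K_n,L_n\in\sD$ such that $K_n\downarrow K$, $L_n\downarrow L$, $K_n\cup L_n$ is convex, and moreover $K_n\cup L_n\downarrow K\cup L$ and $K_n\cap L_n\downarrow K\cap L$. Once such sequences are available, $\sigma$-continuity on $\Omega_0$ gives all four convergences $\Phi(K_n)\to\Phi(K)$, $\Phi(L_n)\to\Phi(L)$, $\Phi(K_n\cup L_n)\to\Phi(K\cup L)$ and $\Phi(K_n\cap L_n)\to\Phi(K\cap L)$ simultaneously for every $\omega\in\Omega_0$.

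Decreasing approximations of a single convex body by rational polytopes are standard. One can take $P_n$ to be a rational polytope with $K+B_{1/(n+1)}\subset P_n\subset K+B_{1/n}$, and then make the sequence monotone by passing to $P_1\cap\dots\cap P_n$, which is still a rational polytope since the $P_i$ are rational H-polytopes. The intersection of the resulting sequence is $K$. If $K=\emptyset$, set $K_n=\emptyset$. Monotone intersections behave well under unions and intersections: if $K_n\downarrow K$ and $L_n\downarrow L$, then $K_n\cap L_n\downarrow K\cap L$, and also $K_n\cup L_n\downarrow K\cup L$. The second claim holds because a point lying in infinitely many $K_n\cup L_n$ lies in infinitely many $K_n$ or in infinitely many $L_n$, hence in $K$ or in $L$. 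So the only real constraint is the convexity of $K_n\cup L_n$.

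The main obstacle is preserving admissibility, and I would handle it as follows. Let $M=K\cup L$, which is convex by assumption. Choose rational-polytope approximations $M_n\downarrow M$ and $A_n\downarrow K$, $B_n\downarrow L$ as above, and set $K_n=A_n\cap M_n$ and $L_n=B_n\cap M_n$. These are rational polytopes and decrease to $K$ and $L$ respectively. However, $K_n\cup L_n$ need not be convex; it need not even cover $M_n$.

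The remedy is a hyperplane-cut construction. The classical fact here is that if $K\cup L$ is convex and $K\cap L\ne\emptyset$ (or $K$, $L$ touch), then $K$ and $L$ can be cut out of $M$ by closed halfspaces. Concretely, I would set $K_n=M_n\cap\{x: \text{dist}(x,K)\le \text{dist}(x,L)+1/n\}$, or, more simply, write $K_n=M_n\cap H_n^+$ and $L_n=M_n\cap H_n^-$ using halfspaces with rational data. The point of this form is that the union $K_n\cup L_n$ then equals $M_n$, which is convex, and $K_n\cap L_n=M_n\cap H_n^+\cap H_n^-$.

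For this to work in general, note that a convex union forces $K$ and $L$ to be separable in $M$ only up to their overlap. Since $K\cup L$ is convex, $M\setminus L\subset K$. I would therefore approximate $K$ by $M_n\cap P_n$, where $P_n\downarrow K$ are rational polytopes, and take $L_n=M_n\cap Q_n$ with $Q_n\downarrow L$, where the sequences are chosen so that $P_n\cup Q_n\supset M_n$. The covering can be arranged because $M_n$ lies within distance $1/n$ of $M=K\cup L$. It remains to check that the pieces are convex and that their union $M_n$ is convex, which holds by construction.

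The delicate point is to ensure that both $P_n$ and $Q_n$ can be chosen to shrink to $K$ and $L$ while still jointly covering $M_n$. The degenerate cases where $K$ or $L$ is empty, or where one contains the other, are handled separately.
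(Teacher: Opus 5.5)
Your final construction, $K_n=M_n\cap P_n$ and $L_n=M_n\cap Q_n$ with $M_n\subset P_n\cup Q_n$ so that $K_n\cup L_n=M_n$, is exactly the paper's. However, you leave its only non-trivial step open (your ``delicate point''), and with the approximations you fixed earlier that step actually fails. Suppose each of $M_n,P_n,Q_n$ is only required to lie between the $1/(n+1)$- and the $1/n$-neighbourhood of $M$, $K$, $L$ respectively. Then $M_n$ may reach distance $1/n$ from $M$, while $P_n\cup Q_n$ is only guaranteed to contain $M+B_{1/(n+1)}$. For instance, take $K=[0,1]^2$ and $L=[1,2]\times[0,1]$. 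Nothing prevents $P_n$ from having its top edge at height $1+\frac1{n+1}$ while $Q_n$ and $M_n$ have theirs at height $1+\frac1n$. Then $M_n\cap(P_n\cup Q_n)$ has a step and is not convex.

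The missing idea is to match the margins. Choose $\eps_n\downarrow0$ and rational polytopes with
\begin{displaymath}
  M\subset M_n\subset M+\eps_nB^d,\quad
  K+\eps_nB^d\subset P_n\subset K+2\eps_nB^d,\quad
  L+\eps_nB^d\subset Q_n\subset L+2\eps_nB^d .
\end{displaymath}
In your indexing it suffices to shift the index of $M_n$ by one. Then use $(K\cup L)+\eps_nB^d=(K+\eps_nB^d)\cup(L+\eps_nB^d)$ to obtain $M_n\subset P_n\cup Q_n$. The upper bounds force $P_n\downarrow K$ and $Q_n\downarrow L$; monotonicity is automatic if $2\eps_{n+1}\le\eps_n$, and for $M_n$ you may pass to $M_1\cap\dots\cap M_n$. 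So nothing delicate is left, and the cases $K=\emptyset$, $L=\emptyset$ or $L\subset K$ need no separate treatment.

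You should also drop the halfspace detour, because it rests on a false claim. Let $L$ be a small ball in the interior of a large ball $K$. Then $M=K\cup L=K$ is convex and $K\cap L\neq\emptyset$. Yet for every halfspace $H$ and every $M_n\supset M$, the set $M_n\cap H$, if it contains $L$, also contains a point of $\partial M$, namely a minimiser over $M$ of the linear functional defining $H$. Hence no sequence $L_n=M_n\cap H_n^-$ can decrease to $L$. The variant defined through $\mathrm{dist}(x,K)\le\mathrm{dist}(x,L)+1/n$ produces sets that are in general not polytopes, hence not in $\sD$.
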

\begin{proof}
  Since the family $\sD$ of polytopes (including $\emptyset$) with
  rational vertices is countable and dense in $\Kd$, we only need to
  show that $\sD$ approximates pairs of convex sets with convex union
  by sequences which also have convex unions.
  Let $K,L\in\Kd$ be such that $U=K\cup L\in\Kd$. Choose a sequence
  $\varepsilon_n\downarrow0$ and decreasing sequences
  $U_n,K_n,L_n\in\sD$ such that
  \begin{displaymath}
    U\subset U_n\subset U+\eps_n B^d,\quad
    K+\eps_n B^d\subset K_n\subset K+2\varepsilon_n B^d,\quad
    L+\eps_n B^d\subset L_n\subset L+2\varepsilon_n B^d.
  \end{displaymath}
  Let $K'_n=K_n\cap U_n$, $L'_n=L_n\cap U_n$.  Then $K'_n,L'_n\in\sD$
  and $K'_n\downarrow K$, $L'_n\downarrow L$,
  $K'_n\cup L'_n\downarrow K\cup L$.
	
  It remains to check that $K'_n\cup L'_n\in\sD$. By the choice of
  $\eps_n$, we have $U_n\subset K_n\cup L_n$, hence
  \begin{displaymath}
    K'_n\cup L'_n=(K_n\cap U_n)\cup(L_n\cap U_n)=
    (K_n\cup L_n)\cap U_n=U_n\in\sD. \qedhere
  \end{displaymath}
\end{proof}

The family of all valuations is very rich, and some invariance
conditions are usually necessary to arrive at meaningful
characterisation results.
If almost all realisations of $\Phi$ are continuous and translation
invariant, then, by McMullen's theorem (see, e.g., Theorem~6.3.5 in
\cite{S14}), each realisation of $\Phi$ can be decomposed into the sum
of homogeneous valuations of orders $i=0,\dots,d$. The following
result shows that the homogeneous summands can be chosen as random
valuations and establishes their measurability.

\begin{proposition}
  \label{prop:Macmullen}
  Assume that $\Phi$ is a random valuation such that almost all of its
  realisations are continuous translation-invariant valuations. Then
  \begin{equation}
    \label{eq:macmullen}
    \Phi(K)=\sum_{i=0}^d \Phi_i(K),
  \end{equation}
  where each $\Phi_i$ is a random valuation such that almost all its
  realisations are continuous, translation-invariant valuations
  which are homogeneous of degree~$i$, that is,
  $\Phi_i(cK)=c^i\Phi_i(K)$ a.s.\ for all $K\in\Kd$ and $c>0$. 
\end{proposition}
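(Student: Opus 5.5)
The plan is to make the homogeneous components explicit through polynomial interpolation in the dilation parameter. Measurability then comes for free.

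Fix a realisation $\phi$ of $\Phi$ that is continuous and translation invariant. By McMullen's theorem, $\phi(\lambda K)=\sum_{i=0}^d \lambda^i\phi_i(K)$ for all $\lambda>0$, where $\phi_i$ is the degree-$i$ homogeneous component. Choose $d+1$ distinct fixed numbers, say $\lambda_j=j+1$ for $j=0,\dots,d$. The Vandermonde matrix $V=(\lambda_j^i)_{j,i}$ is invertible; let $(a_{ij})=V^{-1}$. We then obtain the explicit formula
\begin{displaymath}
  \phi_i(K)=\sum_{j=0}^d a_{ij}\,\phi(\lambda_jK),\qquad i=0,\dots,d.
\end{displaymath}

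We therefore define, for every $\omega$ and every $K\in\Kd$,
\begin{displaymath}
  \Phi_i(K)=\sum_{j=0}^d a_{ij}\,\Phi(\lambda_jK).
\end{displaymath}
This is a finite linear combination of the random variables $\Phi(\lambda_jK)$, so each $\Phi_i(K)$ is a random variable. Hence $\Phi_i$ is a stochastic process on $\Kd$. No measurable selection problem arises, because the definition is pathwise explicit and does not depend on whether the realisation is good.

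Next we verify the required properties on the full-probability event $\Omega_1$ where $\Phi$ is continuous and translation invariant. On $\Omega_1$, the uniqueness in McMullen's decomposition together with the interpolation identity shows that $\Phi_i(\cdot)(\omega)$ equals the McMullen component of $\Phi(\cdot)(\omega)$. That component is a continuous, translation-invariant valuation, homogeneous of degree $i$. Continuity, additivity and translation invariance can also be seen directly from the formula: $K\mapsto\lambda K$ is continuous in $\dH$, commutes with unions and intersections, and maps translates to translates. In addition, $\Phi_i(\emptyset)=0$, and summing over $i$ gives $\sum_i\Phi_i(K)=\phi(1\cdot K)=\Phi(K)$ on $\Omega_1$, which is \eqref{eq:macmullen}. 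Homogeneity $\Phi_i(cK)=c^i\Phi_i(K)$ holds for all $c$ and $K$ simultaneously on $\Omega_1$, so in particular almost surely.

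The only delicate point is that the components must be measurable, i.e.\ genuine random valuations. A naive pathwise application of McMullen's theorem would only produce some function of $\omega$. The interpolation formula removes this obstacle, since it expresses each component through finitely many values of $\Phi$.
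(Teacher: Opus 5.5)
Your proof is correct and takes essentially the same approach as the paper. Both arguments invert the Vandermonde system $\Phi(\lambda_j K)=\sum_{i}\lambda_j^i\Phi_i(K)$ for $d+1$ distinct dilation factors, so each $\Phi_i(K)$ becomes a fixed linear combination of the random variables $\Phi(\lambda_j K)$; your version is slightly more careful in using this formula to define $\Phi_i$ for every $\omega$, which settles how the components are defined off the full-probability event.
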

\begin{proof}
  By McMullen's theorem, \eqref{eq:macmullen} holds for almost all
  realisations of $\Phi$. Each $\Phi_i$ is additive and homogeneous of
  degree $i$ on $\Kd$. For each $K\in\Kd$, we evaluate $\Phi$ at $rK$
  for $r>0$, which shows that
  \begin{displaymath}
    \Phi(rK)=\sum_{i=0}^d \Phi_i(r K)=\sum_{i=0}^d r^i \Phi_i(K).
  \end{displaymath}
  Thus, $\bigl(\Phi_0(K),\dots,\Phi_d(K)\bigr)$ is the solution of the
  above system of linear equations for $d+1$ distinct positive values
  $r_0,\dots,r_d$ of
  $r$. This solution is a linear transform of the values
  $\Phi(r_0 K),\dots,\Phi(r_d K)$, and so each $\Phi_i(K)$ is a random
  variable for all $K\in\Kd$.
\end{proof}

A direct consequence of the previous result and Hadwiger's theorem
(see \cite[Theorem~6.4.14]{S14}) is the following characterisation of
random valuations almost all of whose realisations are continuous and
invariant under rigid motions. In this case the random valuation is the
linear combination of intrinsic volumes $V_0,\dots,V_d$ with random
coefficients, see \cite[Section~4.1]{S14} for the definition of
intrinsic volumes.

\begin{corollary}\label{cor:hadwiger}
  Assume that $\Phi$ is a random valuation almost all of whose
  realisations are continuous and invariant under
  rigid motions. Then
  \begin{equation}
    \label{eq:hadwiger}
    \Phi(K)=\sum_{i=0}^d \xi_i V_i(K),
  \end{equation}
  where $(\xi_0,\dots,\xi_d)$ is a random vector, and $V_0,\ldots,V_d$
  are the intrinsic volumes.
\end{corollary}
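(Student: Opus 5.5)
The plan is to apply Hadwiger's theorem realisation-wise and then show that the coefficients are measurable, following the proof of Proposition~\ref{prop:Macmullen}.

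\emph{Pathwise representation.} Fix a set $\Omega_1$ of full probability on which the realisation $\Phi(\omega,\cdot)$ is a continuous valuation invariant under rigid motions. For $\omega\in\Omega_1$, Hadwiger's theorem \cite[Theorem~6.4.14]{S14} gives real numbers $\xi_0(\omega),\dots,\xi_d(\omega)$ such that
\begin{displaymath}
  \Phi(\omega,K)=\sum_{i=0}^d \xi_i(\omega)V_i(K)\quad\text{for all } K\in\Kd.
\end{displaymath}
On the null set $\Omega\setminus\Omega_1$, put $\xi_i=0$.

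\emph{Measurability of the coefficients.} The only remaining point is that the $\xi_i$ are random variables. Take the unit ball $B^d$ and evaluate at $rB^d$ for $d+1$ distinct radii $r_0,\dots,r_d>0$. By homogeneity, $V_i(rB^d)=r^iV_i(B^d)$, and $V_i(B^d)>0$ for every $i$. The system
\begin{displaymath}
  \Phi(r_jB^d)=\sum_{i=0}^d \bigl(\xi_iV_i(B^d)\bigr) r_j^i,\qquad j=0,\dots,d,
\end{displaymath}
has a Vandermonde matrix $(r_j^i)$, which is invertible. Hence each $\xi_iV_i(B^d)$, and therefore each $\xi_i$, is a fixed linear combination of the random variables $\Phi(r_0B^d),\dots,\Phi(r_dB^d)$ on $\Omega_1$.

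Since $\P(\Omega_1)=1$ and the probability space is complete, each $\xi_i$ is $\salg$-measurable, so $(\xi_0,\dots,\xi_d)$ is a random vector. Alternatively, the coefficients can be obtained from Proposition~\ref{prop:Macmullen}: each homogeneous component $\Phi_i$ is continuous, invariant under rigid motions and homogeneous of degree $i$, so Hadwiger's theorem forces $\Phi_i=\xi_iV_i$, and then $\xi_i=\Phi_i(B^d)/V_i(B^d)$ is measurable.

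There is no real obstacle. The only care needed is that Hadwiger's representation holds simultaneously for all $K$ on one full-probability event, not merely almost surely for each fixed $K$, and the pathwise assumption provides exactly this.
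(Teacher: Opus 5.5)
Your proof is correct and is essentially the paper's argument: the paper obtains the corollary directly from Proposition~\ref{prop:Macmullen} and Hadwiger's theorem (your alternative route), and your Vandermonde system at $r_0B^d,\dots,r_dB^d$ is the proof of that proposition specialised to $K=B^d$. The only point left implicit, pathwise additivity of the realisations on $\Omega_1$, follows from Lemmas~\ref{lemma:separable} and~\ref{lemma:additive-functions}, because continuous realisations are $\sigma$-continuous.
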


A rich source of random valuations is provided by random (signed)
measures on the family $\sC$ of non-empty closed convex sets in $\R^d$ equipped
with the Borel $\sigma$-algebra generated by the Fell topology, see
\cite[Section~1.2]{ma75} and \cite[Appendix~C]{mo1}. 

\begin{proposition}
  \label{prop:Phi-Z}
  Let $Z$ be a locally finite random signed measure on $\sC$ (as
  defined in \eqref{eq:sCK}). Then
  \begin{equation}
    \label{eq:Z-representation}
    \Phi(K)=Z(\{F\in\sC\colon K\cap F\neq\emptyset\}),\quad K\in\Kd,
  \end{equation}
  is a $\sigma$-continuous random valuation. 
  Furthermore, $\Phi$ is a.s.~non-negative if $Z$ is
  a.s.~non-negative. In this case, $\Phi$ is necessarily monotone.
\end{proposition}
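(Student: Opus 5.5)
The plan is to work pathwise, fixing a realisation of $Z$ which is a locally finite signed measure. The first task is measurability. For each $K\in\Kd$, the family $\sC_K$ is Fell-open when $K$ is compact, so it is Borel, and $\Phi(K)=Z(\sC_K)$ is a random variable. Local finiteness guarantees that $|Z|(\sC_K)<\infty$, so $\Phi(K)$ is finite. Also $\sC_\emptyset=\emptyset$, hence $\Phi(\emptyset)=0$.

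Additivity reduces to a set identity. Let $K,L\in\Kd$ with $K\cup L$ convex. Trivially $\sC_{K\cup L}=\sC_K\cup\sC_L$. The key claim is that $\sC_{K\cap L}=\sC_K\cap\sC_L$. The inclusion $\subseteq$ is obvious. For $\supseteq$, take $F\in\sC$ meeting $K$ at a point $x$ and $L$ at a point $y$. The segment $[x,y]$ lies in $F$ because $F$ is convex, and in $K\cup L$ because $K\cup L$ is convex. Since $[x,y]$ is connected and is covered by the two closed sets $K$ and $L$, both of which it meets, $[x,y]\cap K\cap L\neq\emptyset$. Hence $F$ meets $K\cap L$. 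Inclusion--exclusion for the finite signed measure $Z$ restricted to $\sC_{K\cup L}$ then gives
\begin{displaymath}
  \Phi(K\cup L)+\Phi(K\cap L)=\Phi(K)+\Phi(L).
\end{displaymath}
This identity holds for every realisation, so in particular almost surely.

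For $\sigma$-continuity, let $K_n\downarrow K$. Then $\sC_{K_n}\downarrow \bigcap_n\sC_{K_n}$, and by compactness this intersection equals $\sC_K$: if a closed $F$ meets every $K_n$, pick $x_n\in F\cap K_n$. These points lie in the compact set $K_1$, so a subsequence converges to some $x$, and $x\in F\cap K$ because $F$ is closed and the $K_n$ are nested compact sets. This also covers $K=\emptyset$, since then $K_n=\emptyset$ eventually. Continuity from above of the finite measures $Z^\pm$ on $\sC_{K_1}$ then yields $\Phi(K_n)\to\Phi(K)$ for every realisation.

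Finally, if $Z\ge0$, then $\Phi\ge0$ immediately. Monotonicity follows because $K\subseteq L$ implies $\sC_K\subseteq\sC_L$, so $\Phi(K)\le\Phi(L)$. The only step that needs any real care is the set identity $\sC_{K\cap L}=\sC_K\cap\sC_L$, which is where convexity of $F$ and of $K\cup L$ is used, via the connectedness argument above. The remaining steps are routine measure theory.
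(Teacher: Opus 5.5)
Your proof is correct and follows the paper's own argument: pathwise inclusion--exclusion using $\sC_K\cap\sC_L=\sC_{K\cap L}$ for admissible $K,L$, and continuity from above of the finite signed measure on $\sC_{K_1}$. You also supply the connectedness and compactness justifications that the paper leaves implicit. One small slip: for compact $K$ the family $\sC_K$ is Fell-\emph{closed}, not Fell-open, being the complement of the subbasic open set $\{F\colon F\cap K=\emptyset\}$. It is Borel either way, so measurability is unaffected.
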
 
\begin{proof}
  Since $Z$ is a random measure, $\Phi(K)$ is a random variable for
  every $K\in\Kd$ and thus $\Phi$ is a stochastic process on $\Kd$.
  Assume that $K,L\in\Kd$ are admissible.
  Since $Z$ is a random measure and its support is contained in the
  family of closed convex sets, 
  \begin{displaymath}
    \Phi(K)+\Phi(L)=Z(\sC_K)+Z(\sC_L)
    =Z(\sC_{K\cup L})+Z(\sC_{K\cap L})
    =\Phi(K\cup L)+\Phi(K\cap L)\quad\text{a.s.}
  \end{displaymath}
  Note that convexity of $K\cup L$ is essential above to conclude that
  $\sC_K\cap \sC_L=\sC_{K\cap L}$.  Since $\sC_\emptyset=\emptyset$,
  we have $\Phi(\emptyset)=Z(\emptyset)=0$ a.s.

  Assume that $(K_n)_{n\ge 1}$ is a sequence in $\Kd$ such that
  $K_n\downarrow K$. Since $Z$ is locally finite, its total variation on
  $\sC_{K_1}$ is finite. Since $\sC_{K_n}\downarrow \sC_K$ as
  $n\to\infty$, continuity from above for finite signed measures
  yields that $\Phi(K_n)=Z(\sC_{K_n})$ a.s.\ converges to
  $Z(\sC_K)=\Phi(K)$ as $n\to\infty$. 
  If $Z$ is non-negative, then $\Phi$ is also non-negative and
  monotone, since $\sC_L\subset \sC_K$ for $L\subset K$.
\end{proof}

\section{Infinitely divisible valuations}
\label{sec:supp-infin-divis}

A random valuation $\Phi$ is said to be \emph{infinitely divisible}
if, for each $n\geq 2$, $\Phi$ is equal in distribution to the sum
$\Phi_{1,n}+\cdots+\Phi_{n,n}$, where $\Phi_{1,n},\ldots,\Phi_{n,n}$
are $n$ i.i.d.~random valuations. We say that $\Phi$ is an \emph{ID
  valuation} if it is an infinitely divisible separable random
valuation. In the following we consider only non-negative $\Phi$,
which implies that its distribution does not have a Gaussian component. 

Let $\Phi$ be a non-negative ID valuation, and let
$\sI=\{K_1,\dots,K_m\}$ be a finite collection of sets from
$\Kd$. Then the random vector $\Phi_{\sI}=(\Phi(K_1),\dots,\Phi(K_m))$
is infinitely divisible. By the L\'evy--Khinchin representation (see
\cite{sato}), there exists a measure $\Lambda_\sI$ on
$\R_+^m\setminus\{0\}$ and $b_\sI\in\R_+^m$ such that, for each
$u\in\R_+^m$,
\begin{displaymath}
  \E \exp\Big\{-\big\langle u,\Phi_{\sI}\big\rangle\Big\}
  =\exp\bigg\{-\langle u,b_\sI\rangle
  -\int_{\R_+^m\setminus\{0\}} \Big(1-e^{-\langle u,x\rangle}\Big)
  \Lambda_{\sI}(dx)\bigg\}.
\end{displaymath}
The uniqueness of $\Lambda_\sI$ and $b_\sI$ implies
that for $\sI\subset\sI'$, the vector $b_{\sI'}$ restricted
to $\sI$ equals $b_\sI$, and $\Lambda_\sI$ is the projection of
$\Lambda_{\sI'}$ (on the space excluding the origin).

A measure $\Lambda$ on $(\R_+^{\Kd},\sB^{\Kd})$ is said to be a
\emph{L\'evy measure} if the following two conditions hold:
\begin{enumerate}
\item[(L1)] for all $K\in\Kd$,
  \begin{equation}
    \label{eq:Lambda-int-1}
    \int \min(1,\psi(K))\Lambda(d\psi)<\infty;
  \end{equation}
\item[(L2)] for all $A\in\sB^{\Kd}$,
  $\Lambda(A)=\Lambda_*(A\setminus \{O_{\Kd}\})$, where $\Lambda_*$ is the
  inner measure and $O_{\Kd}$ is the function $\psi\colon\Kd\to\R$ which is
  identically zero.
\end{enumerate}

Condition (L2) is satisfied if there exists a countable set
$\sJ\subset \Kd$ such that
\begin{displaymath}
  \Lambda\big(\{\psi\in\R_+^{\Kd}\colon\psi(K)=0\;\text{for all}\; K\in\sJ\}\big)=0.
\end{displaymath}

\begin{theorem}
  \label{thr:LH}
  Let $\Phi$ be a non-negative ID valuation.  Then there exists a
  unique (necessarily $\sigma$-finite) L\'evy measure $\Lambda$ on
  $\R_+^{\Kd}$ whose outer measure is concentrated on the family of valuations and a
  deterministic valuation $\phi$ such that, for any $m\ge1$, finite
  family $\sI=\{K_1,\dots,K_m\}\subset\Kd$, and $u\in\R_+^m$,
  \begin{displaymath}
    \E\exp\Big\{-\big\langle u,\Phi_{\sI}\big\rangle\Big\}
    =\exp\bigg\{-\langle u,\phi_{\sI}\rangle
    -\int_{\R_+^{\Kd}\setminus\{O_{\Kd}\}} \Big(1-e^{-\langle u,\psi_{\sI}\rangle}
    \Big)\Lambda(d\psi)\bigg\}.
  \end{displaymath}
  Furthermore, $\Phi$ is monotone if and only if $\phi$ is monotone
  and the $\Lambda$-outer measure is supported by monotone valuations.
\end{theorem}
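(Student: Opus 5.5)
The plan is to build $\Lambda$ from the consistent family of finite-dimensional L\'evy measures $(\Lambda_\sI,b_\sI)$ by invoking the general theory of L\'evy measures for infinitely divisible processes indexed by arbitrary sets from \cite{MR3857855}. Then we verify that both the drift and the L\'evy measure inherit the algebraic properties of $\Phi$. First, set $\phi(K)=b_{\{K\}}$. The projective consistency noted above guarantees that $b_\sI$ is exactly the vector $(\phi(K_1),\dots,\phi(K_m))$. Second, the L\'evy measures $\Lambda_\sI$ are projectively consistent off the origin. Since $\Phi$ is separable, we can take the countable family $\sD$ from the definition of separability. By the result of \cite{MR3857855} (which, for processes with nonnegative paths and separable in probability, produces a measure $\Lambda$ on $\R_+^{\Kd}$ satisfying (L1) and (L2) with these projections), we then obtain a unique $\Lambda$ whose finite-dimensional projections restricted away from zero are the $\Lambda_\sI$. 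Condition (L1) is simply the one-dimensional integrability of each $\Lambda_{\{K\}}$. Uniqueness and $\sigma$-finiteness come from (L2): the set where all $\psi(K)$, $K\in\sD$, vanish is null, and $\{\psi:\psi(K)>1/n\}$, $K\in\sD$, $n\ge1$, gives a countable cover by sets of finite measure.

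Next, we show the concentration on valuations. Fix admissible $K,L$ and put $\sI=\{K,L,K\cup L,K\cap L,\emptyset\}$. The vector $\Phi_\sI$ a.s.\ lies in the closed linear subspace $H=\{x:x_K+x_L=x_{K\cup L}+x_{K\cap L},\ x_\emptyset=0\}$. The key standard fact is this: if an ID vector is supported by a closed subspace (more generally, a closed convex cone), then its drift lies in $H$ and its L\'evy measure is concentrated on $H$. One can see this by noting that the Laplace transform along linear functionals vanishing on $H$ is identically $1$. Concretely, $\langle v,\Phi_\sI\rangle=0$ a.s.\ for $v\perp H$, so the one-dimensional ID law of $\langle v,\Phi_\sI\rangle$ is $\delta_0$. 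By uniqueness of one-dimensional L\'evy--Khinchin triplets, the image of $\Lambda_\sI$ under $x\mapsto\langle v,x\rangle$ is zero off $0$, and $\langle v,b_\sI\rangle=0$. Hence $\phi$ is additive with $\phi(\emptyset)=0$, and $\Lambda$ gives zero mass to $\{\psi:\psi(K)+\psi(L)\ne\psi(K\cup L)+\psi(K\cap L)\}$.

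The main obstacle is passing from ``for each fixed admissible pair'' to ``outer measure concentrated on valuations''. The family of all admissible pairs is uncountable, and the set of valuations is not cylindrical. My plan is to show that every cylindrical set $A\supseteq\{\text{valuations}\}$ has full $\Lambda$-measure. Such an $A$ depends on countably many coordinates $\sJ$, and we may enlarge $\sJ$ so that it is closed under those unions and intersections of admissible pairs that appear. Using a countable union of null sets, $\Lambda$-almost every $\psi$ is additive on all admissible pairs within $\sJ$. Since the restriction of $\psi$ to $\sJ$ then agrees with the restriction of some valuation (or at least lies in the projection that $A$ must contain), we get $\Lambda(A^c)=0$. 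Making this extension step rigorous is the delicate point; if direct extension fails, I would instead restrict the outer-measure statement to the separant $\sD$ and use separability, exactly as in Lemma~\ref{lemma:additive-functions}.

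Finally, the monotonicity statement follows the same pattern with the closed cone $\{x:x_L\le x_K\}$ for $L\subset K$. If $\Phi(K)-\Phi(L)\ge0$ a.s., then this one-dimensional ID law is supported on $\R_+$. Hence its L\'evy measure, the image of $\Lambda_{\{K,L\}}$, lives on $[0,\infty)$, and its drift is nonnegative; these give $\phi(L)\le\phi(K)$ and $\psi(L)\le\psi(K)$ for $\Lambda$-a.e.\ $\psi$. We then pass to outer-measure concentration as in the previous step. Conversely, if $\phi$ is monotone and $\Lambda$ lives on monotone $\psi$, the Laplace transform of $\Phi(K)-\Phi(L)$ (computed through the representation) is that of a nonnegative ID variable, so $\Phi(L)\le\Phi(K)$ a.s.
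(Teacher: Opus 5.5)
\paragraph{Where the gap is.} Your architecture matches the paper's: $\phi(K)=b_{\{K\}}$, the patching theorem of \cite{MR3857855}, the subspace (resp.\ half-space) argument for $(\Phi(K),\Phi(L),\Phi(K\cup L),\Phi(K\cap L))$, and separability for $\sigma$-finiteness. A small correction: the nullness of $\{\psi:\psi|_{\sD}=0\}$ comes from separability in probability of $\Phi$, which forces $\Lambda(\{|\psi(K_n)-\psi(K)|>\delta\})\to0$, not from (L2) alone.

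The real problem is the step you yourself call the crux. You need to pass from ``$\Lambda$-a.e.\ $\psi$ satisfies the identity for each fixed admissible (or nested) pair'' to ``the $\Lambda$-outer measure is concentrated on (monotone) valuations''. You leave this open, and the mechanism you sketch for it is false. Closing $\sJ$ under unions and intersections of its admissible pairs does not make within-$\sJ$ additivity imply that $\psi|_{\sJ}$ is the restriction of a valuation.

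\paragraph{Counterexample.} Join an interior point $p$ of a triangle $T$ to its vertices. This gives triangles $T_1,T_2,T_3$, segments $S_{ij}=T_i\cap T_j$, and the point $\{p\}$. Set $\sJ=\{\emptyset,T,T_i,S_{ij},\{p\}\}$. Every admissible pair in $\sJ$ is nested: $T_i\cup T_j$ has a reflex angle at $p$, and two of the segments, or a segment and the opposite triangle, have non-convex union. So $\sJ$ is closed in your sense, and every $f$ on $\sJ$ with $f(\emptyset)=0$ is additive within $\sJ$. For instance, take $f(T)=1$ and $f=0$ elsewhere, which is even non-negative and monotone. Yet every valuation satisfies
$\phi(T)=\sum_i\phi(T_i)-\sum_{i<j}\phi(S_{ij})+\phi(\{p\})$.
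To see this, cut along the line through $p$ and a vertex, which splits $T$, $T_3$ and that line segment in two, and apply additivity five times. The cylinder set defined by this identity contains all valuations, but your argument gives no control over its complement.

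Your fallback via $\sD$ fails as well. Separability is a property of the paths of $\Phi$ and says nothing about $\Lambda$-typical $\psi$ off $\sD$. The paper needs a separate one-atom Poisson argument in Lemma~\ref{lemma:psi-sigma} just to transfer $\sigma$-continuity.

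\paragraph{What is needed to close it.} Along your lines you need two facts.
\begin{enumerate}
\item Every finitely supported linear identity valid for all valuations is a finite combination of admissible-pair identities. Valuations are the annihilator in $\R^{\Kd}$ of the span $W$ of the pair functionals inside the finitely supported functions, and $W^{\perp\perp}=W$. So $\sJ$ must be enlarged by the auxiliary sets witnessing a countable spanning family of such identities supported in $\sJ$; in the example these are the two halves of $T$ and of $T_3$ and the cut segment.
\item The projection of the valuations onto $\R^{\sJ}$ is exactly the common zero set of these identities, by extension of linear functionals.
\end{enumerate}
Even this handles only the linear part. Non-negativity and monotonicity are cone constraints, and a non-negative function on a family may extend only to valuations that are negative elsewhere. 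On $\{T_i,S_{ij},\{p\}\}$, the values $1$ on the $S_{ij}$ and $0$ otherwise force $\phi(T)=-3$. So ``as in the previous step'' does not yet give the monotone statement.

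The paper dispatches all this in one sentence: it runs the construction of \cite{MR3857855} directly on the family of non-negative additive (resp.\ monotone) functions, on the grounds that these properties are determined by finite-dimensional distributions. Your proposal neither adopts that route nor supplies a working replacement.
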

\begin{proof}
  We let $\phi(K)=b_{\sI}$ with $\sI=\{K\}$ for $K\in\Kd$. Since
  $\Phi$ is additive on admissible sets, the uniqueness of $b_{\sI}$
  implies that $\phi$ is additive and so is a valuation. The
  additivity follows from the L\'evy--Khinchin formula applied to the
  vector $(\Phi(K),\Phi(L),\Phi(K\cup L),\Phi(K\cap L))$, which takes
  values from a linear subspace of $\R^4$. 

  By adapting the proof of Theorem~2.8 from \cite{MR3857855} to the
  family of all non-negative additive functions on $\Kd$, it is
  possible to patch together the measures $\Lambda_\sI$ to come up
  with a single measure on $\sB^{\Kd}$ which admits them as
  projections. The same works for the family of all monotone functions
  on $\Kd$. Indeed, all these properties of stochastic processes on
  $\Kd$ are determined by their finite-dimensional distributions.
  The $\sigma$-finiteness of $\Lambda$ follows from separability of
  $\Phi$ as in Corollary~2.18 from \cite{MR3857855} for
  general stochastic processes. 
\end{proof}

Let $N$ be a Poisson random measure on $(\R_+^{\Kd},\sB^{\Kd})$ with
intensity $\Lambda$, see \cite[Definition~15.2]{LastPenrose17}. Note
that $N\big(\{\psi\colon \psi(K)>\eps\}\big)$ has Poisson distribution
with parameter $\Lambda\big(\{\psi\colon \psi(K)>\eps\}\big)<\infty$,
and so is almost surely finite for each given $K\in\Kd$.  Since the
space of valuations with the pointwise convergence is not a standard
Borel space, it is not immediately possible to interpret the Poisson
random measure $N$ as a Poisson point process on the family of
valuations, see \cite[Section~2.1]{LastPenrose17}.  Proposition~2.10
from \cite{MR3857855} implies the following.

\begin{proposition}
  A non-negative ID valuation $\Phi$ with vanishing deterministic part
  has the same distribution as
  \begin{equation}
    \label{eq:Poisson-sum}
    \tilde{\Phi}(K)=\int \psi(K) N(d\psi),\quad K\in\Kd,
  \end{equation}
  where $N$ is the Poisson random measure on $(\R_+^{\Kd},\sB^{\Kd})$
  with intensity $\Lambda$; the outer measure associated with
  $\Lambda$ is concentrated on valuations.
\end{proposition}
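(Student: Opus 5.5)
The plan is to check three things: that the integral in \eqref{eq:Poisson-sum} defines a stochastic process on $\Kd$; that its finite-dimensional distributions coincide with those of $\Phi$, using Theorem~\ref{thr:LH} with $\phi=0$; and that the concentration of $\Lambda$ on valuations makes $\tilde\Phi$ a random valuation.

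\textbf{Well-definedness and measurability.} Fix $K\in\Kd$. The function $\psi\mapsto\psi(K)$ is $\sB^{\Kd}$-measurable and non-negative, so $\tilde\Phi(K)=\int\psi(K)\,N(d\psi)$ is a $[0,\infty]$-valued random variable. To see that it is a.s.\ finite, split the integral over $\{\psi(K)>1\}$ and $\{\psi(K)\le1\}$. By (L1), the first set has finite $\Lambda$-measure, so a.s.\ only finitely many atoms of $N$ lie there. The second part has expectation $\int\min(1,\psi(K))\Lambda(d\psi)<\infty$. Hence $\tilde\Phi(K)<\infty$ a.s.

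\textbf{Finite-dimensional distributions.} Take a finite family $\sI=\{K_1,\dots,K_m\}$ and $u\in\R_+^m$. Apply the Laplace functional of a Poisson random measure (see \cite{LastPenrose17}) to the non-negative measurable function $f(\psi)=\langle u,\psi_\sI\rangle$. This gives
\begin{displaymath}
  \E e^{-\langle u,\tilde\Phi_\sI\rangle}
  =\exp\Big\{-\int\big(1-e^{-\langle u,\psi_\sI\rangle}\big)\Lambda(d\psi)\Big\}.
\end{displaymath}
Since $1-e^{-\langle u,\psi_\sI\rangle}$ vanishes at $O_{\Kd}$, the integral may be taken over $\R_+^{\Kd}\setminus\{O_{\Kd}\}$. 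By Theorem~\ref{thr:LH} with vanishing deterministic part, this is exactly the Laplace transform of $\Phi_\sI$. Laplace transforms on $\R_+^m$ determine distributions, so $\tilde\Phi$ and $\Phi$ have the same finite-dimensional distributions.

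\textbf{Additivity.} We must show that for fixed admissible $K,L$ we have $\tilde\Phi(K\cup L)+\tilde\Phi(K\cap L)=\tilde\Phi(K)+\tilde\Phi(L)$ a.s. There are two routes.
\begin{itemize}
  \item \emph{Via equality in law.} $\Phi$ satisfies this identity a.s., and the vector of the four values has the same law for $\tilde\Phi$ as for $\Phi$. The identity is an event determined by that vector, so it also holds a.s.\ for $\tilde\Phi$. The same argument gives $\tilde\Phi(\emptyset)=0$ a.s.
  \item \emph{Atom by atom.} Directly, the set $A$ of $\psi$ violating additivity at this pair is a cylinder set in $\sB^{\Kd}$. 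Its outer $\Lambda$-measure is zero because $\Lambda$ is concentrated on valuations, hence $\Lambda(A)=0$. Therefore $N(A)=0$ a.s., and additivity holds atom by atom.
\end{itemize}

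\textbf{Main obstacle.} The delicate point is exactly this measure-theoretic subtlety: "concentrated on valuations" is only an outer-measure statement, and the space of valuations is not $\sB^{\Kd}$-measurable. I expect this to be harmless, because only countably many cylinder conditions enter any fixed finite-dimensional statement. If needed, I would invoke Proposition~2.10 of \cite{MR3857855} directly, which already gives the Poisson representation for general ID processes indexed by arbitrary sets, and then add the additivity verification above.
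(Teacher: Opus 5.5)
Your proof is correct. The paper gives no argument of its own here: it simply invokes Proposition~2.10 of \cite{MR3857855}, a Poisson representation for general infinitely divisible processes indexed by arbitrary sets. You instead prove the needed special case directly. You apply the Laplace functional of $N$ to $f(\psi)=\langle u,\psi_{\sI}\rangle$ and match the result with Theorem~\ref{thr:LH} for $\phi=0$.

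What your route buys is a self-contained argument with three inputs only:
\begin{itemize}
\item condition (L1);
\item the $\sigma$-finiteness of $\Lambda$ from Theorem~\ref{thr:LH}, which you should state explicitly, since it is what guarantees that $N$ exists;
\item the Laplace functional of a Poisson process on an arbitrary measurable space.
\end{itemize}
It also shows explicitly that no compensation is needed for non-negative processes. Thus ``vanishing deterministic part'' in the sense of Theorem~\ref{thr:LH} corresponds exactly to the uncompensated integral. Finally, it makes clear that the non-measurability of the family of valuations plays no role, because every identity involved concerns only finitely many coordinates. The citation buys brevity and generality.

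Your additivity check is not needed for equality of distributions, but both of your routes are valid. In the second route you need not speak of atoms at all. The two integrands agree outside the measurable $\Lambda$-null set $A$, hence $N$-almost everywhere almost surely. Phrasing it this way avoids any appearance of treating $N$ as a point process of valuations, which the paper explicitly cautions against.
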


The deterministic part $\phi(K)$ can be recovered as the essential
infimum $\essinf\Phi(K)$ for all $K\in\Kd$.
Unlike monotonicity and non-negativity, $\sigma$-continuity is not a
finite-dimensional constraint and therefore does not pass
immediately from $\Phi$ to its L\'evy measure.  The following result
addresses this issue.

\begin{lemma}
  \label{lemma:psi-sigma}
  If $\Phi$ is a monotone ID valuation whose realisations are almost
  surely $\sigma$-continuous, then its L\'evy measure is supported by
  $\sigma$-continuous valuations, meaning that the $\Lambda$-outer
  measure of the family of valuations that are not $\sigma$-continuous is zero.
\end{lemma}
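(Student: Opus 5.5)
The plan is to reduce $\sigma$-continuity to a countable family of conditions, and then to use monotonicity together with the Poisson representation \eqref{eq:Poisson-sum} to transfer each condition from $\Phi$ to $\Lambda$. We may assume the deterministic part $\phi$ vanishes. Since $\phi(K)=\essinf\Phi(K)$ and $\Phi\ge\Phi-\phi\ge0$, the valuation $\phi$ is itself monotone (Theorem~\ref{thr:LH}). It is also $\sigma$-continuous: for $K_n\downarrow K$ we have $\phi(K_n)\downarrow c\ge\phi(K)$, and $\Phi(K_n)\to\Phi(K)$ a.s. Comparing Laplace transforms (the drift of the limit variable dominates $\limsup$ of the drifts by monotone convergence in the Lévy–Khinchin exponent) gives $c\le\phi(K)$. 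So $\Phi-\phi$ is again a.s. $\sigma$-continuous, and we may work with $\tilde\Phi$ from \eqref{eq:Poisson-sum}.

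\textbf{Step 1 (countable reduction).} For a monotone $\psi$, $\sigma$-continuity is equivalent to upper semicontinuity. Let $\sD$ be the rational polytopes, as in Lemma~\ref{lemma:separable}. Then a monotone $\psi$ is $\sigma$-continuous if and only if, for every $K\in\Kd$, $\psi(K)=\inf\{\psi(P)\colon P\in\sD,\ K\subset\operatorname{int}P\}$. Moreover it suffices to require this for every $P_0\in\sD$ in the form
\begin{displaymath}
\psi(K)=\lim_{n}\psi(P_n)\quad\text{whenever } P_n\in\sD,\ P_n\downarrow K.
\end{displaymath}
The difficulty is that $K$ ranges over an uncountable set. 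I would handle this by defining the "upper regularisation" $\psi^*(K)=\inf\{\psi(P)\colon P\in\sD,\ K\subset\operatorname{int}P\}$, which is monotone, $\sigma$-continuous and determined by the countably many values $\psi|_{\sD}$. The set $B$ of non-$\sigma$-continuous monotone $\psi$ is $\{\psi\colon \psi^*(K)>\psi(K)\text{ for some }K\}$. We must show $\Lambda^*(B)=0$, and the natural route is to exhibit a measurable cover: show that $\Lambda$-a.e. $\psi$ coincides with $\psi^*$.

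\textbf{Step 2 (key domination argument).} Let $\Phi^*$ be the regularisation of $\Phi$ built from $\Phi|_{\sD}$. By a.s. $\sigma$-continuity, $\Phi=\Phi^*$ a.s. (as processes on $\Kd$, up to indistinguishability on each fixed $K$). In the Poisson representation, $\tilde\Phi|_{\sD}=\sum_i\psi_i|_{\sD}$, a countable sum of non-negative monotone terms, so $\tilde\Phi^*\ge\sum_i\psi_i^*\ge\sum_i\psi_i=\tilde\Phi$. For each fixed $K$, $\E e^{-\tilde\Phi^*(K)}=\E e^{-\tilde\Phi(K)}$, because $\tilde\Phi^*(K)$ is a monotone limit over a fixed decreasing sequence from $\sD$, whose law equals that of $\Phi^*(K)=\Phi(K)$. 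Hence $\sum_i(\psi_i^*(K)-\psi_i(K))=0$ a.s. By the Mecke/Campbell formula this gives $\Lambda(\{\psi\colon\psi^*(K)\ne\psi(K)\})=0$ for each fixed $K$, with measurability holding since $\psi^*(K)$ is a countable infimum of coordinates.

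\textbf{Step 3 (from fixed $K$ to all $K$), the main obstacle.} Step 2 gives the null sets only for each $K$ separately, while $B$ involves all $K$ at once. I would first try to show that the set $\{\psi\colon \psi \text{ additive monotone and } \psi=\psi^*\text{ on }\sD\}$ consists of $\sigma$-continuous valuations. This holds for any additive monotone $\psi$ that agrees with its regularisation on the countable dense class, because it is the same statement as Lemma~\ref{lemma:separable} combined with the characterisation of $\Lambda$ as the extension of the projections $\Lambda_\sI$ (Theorem~\ref{thr:LH}, following \cite{MR3857855}). Concretely, I would construct $\Lambda$ on the subspace of valuations that are determined by their restriction to $\sD$ via $\psi=\psi^*$. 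This is possible because the family $\{\Lambda_\sI\}$ is consistent with the image of $\Lambda|_{\sD}$ under the map $\psi|_{\sD}\mapsto\psi^*$, by Step 2 applied to finitely many $K$. By uniqueness of the Lévy measure in Theorem~\ref{thr:LH}, this measure equals $\Lambda$, so the outer measure of $B$ vanishes. Checking this consistency of finite-dimensional projections under the regularisation map, and that the regularisation map preserves additivity $\Lambda$-a.e. (again via Step 2 on admissible quadruples), is where I expect the real work to lie.
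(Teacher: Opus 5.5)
Your route differs from the paper's and its core is sound. However, Step~3 as written rests on a false claim and an invalid justification, and your final inference does not follow.

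\emph{Comparison with the paper.} The paper splits the Poisson representation along $A_{K_0,\eps}=\{\psi\colon\psi(K_0)\geq\eps\}$ into $\phi+X+Y$. It argues pathwise that the three monotone summands of a $\sigma$-continuous sum have non-negative defects adding up to zero. It then conditions the compound Poisson part $X$ on having exactly one atom, which transfers $\sigma$-continuity to $\lambda^{-1}\Lambda|_{A_{K_0,\eps}}$, and finishes with a countable union over $K_0\in\sD$ and rational $\eps$. This is short, but it treats the Poisson integral as a sum of whole valuations and moves the non-cylindrical property of $\sigma$-continuity through equality in law. You use the same mechanism (non-negative defects with zero sum), but one set $K$ at a time, with Campbell's formula instead of conditioning, and only on countably many coordinates. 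The uncountability is then absorbed by the explicit regularisation $S\colon\psi\mapsto\psi^*$. Step~2 is correct, provided $N$ is realised as a point process only on the Polish space $\R_+^{\sD\cup\{K\}}$, which is all it uses.

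\emph{Repairs needed in Step~3.}

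(i) It is false that a monotone valuation with $\psi=\psi^*$ on $\sD$ is $\sigma$-continuous; Lemma~\ref{lemma:separable} goes the other way. On the line, $\psi(K)=\1_{K\cap(\sqrt2,\infty)\neq\emptyset}$ equals $\psi^*$ on all intervals with rational end-points, yet $\psi([0,\sqrt2+1/n])=1\not\to0=\psi([0,\sqrt2])$. You do not need this claim: $\psi^*$ itself is monotone and $\sigma$-continuous for every $\psi$ that is monotone on $\sD$.

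(ii) The consistency check is immediate and needs no uniqueness theorem. For any countable $T\subset\Kd$, the set $\{\psi\colon\psi^*|_T\neq\psi|_T\}$ is a countable union of the null sets from Step~2. Hence $S_\#\Lambda=\Lambda$ on $\sB^{\Kd}$.

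(iii) Additivity of $\psi^*$ does not follow from Step~2 applied to admissible quadruples, since that gives a separate null set for each of uncountably many pairs. It is instead a deterministic fact on the measurable set $G$ of $\psi$ that vanish at $\emptyset$ and are monotone and additive on $\sD$; $G$ has null complement by Theorem~\ref{thr:LH}.
\begin{itemize}
\item Choose the polytopes $K_n'$, $L_n'$ from the proof of Lemma~\ref{lemma:separable} decreasing, with $K\subset\operatorname{int}K_n'$, $L\subset\operatorname{int}L_n'$ and $K\cup L\subset\operatorname{int}(K_n'\cup L_n')$ (take $U\subset\operatorname{int}U_n$ there). Then also $K\cap L\subset\operatorname{int}(K_n'\cap L_n')$.
\item If $P_n\in\sD$ decrease to $M$ with $M\subset\operatorname{int}P_n$, then every $P\in\sD$ with $M\subset\operatorname{int}P$ eventually contains $P_n$. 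Hence $\psi(P_n)\to\psi^*(M)$.
\item Applying this to the four sequences, additivity passes to the limit.
\end{itemize}

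\emph{What you actually obtain.} From $S_\#\Lambda=\Lambda$ and the fact that $S$ maps $G$ into monotone $\sigma$-continuous valuations, every set in $\sB^{\Kd}$ disjoint from the monotone $\sigma$-continuous valuations is $\Lambda$-null. In other words, these valuations carry the outer measure of $\Lambda$. This is also the form (``concentrated on'') in which the paper's conditioning step concludes.

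It does not give $\Lambda^*(B)=0$, and that literal form is unavailable in the cylindrical $\sigma$-algebra. On the line, let $\Lambda$ be the image of Lebesgue measure on $[0,1]$ under $a\mapsto\1_{\cdot\cap[a,\infty)\neq\emptyset}$; this is the L\'evy measure of a $\sigma$-continuous Poisson sum. A set in $\sB^{\mathscr K^1}$ depends on countably many intervals. If it contains every $\1_{\cdot\cap(a,\infty)\neq\emptyset}$, it also contains $\1_{\cdot\cap[a,\infty)\neq\emptyset}$ whenever $a$ is not a right end-point of one of those intervals. So it has full $\Lambda$-measure.
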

\begin{proof}
  By Theorem~\ref{thr:LH}, the deterministic part $\phi$ of $\Phi$ is
  monotone and the $\Lambda$-outer measure is concentrated on monotone
  valuations.  Fix $K_0\in\Kd$ and $\eps>0$, and let $A_{K_0,\eps}$ be
  the family of all $\psi\in\R_+^{\Kd}$ such that $\psi(K_0)\geq\eps$.
  By \eqref{eq:Lambda-int-1}, $\Lambda(A_{K_0,\eps})<\infty$.  Use the
  Poisson representation of $\Phi$ and split its Poisson random
  measure into its restrictions to $A_{K_0,\eps}$ and its
  complement. This gives, on one probability space,
  $\widetilde\Phi=\phi+X+Y$, where
  \begin{displaymath}
    X(K) = \int_{A_{K_0,\eps}}\psi(K)N(d\psi)
  \end{displaymath}
  is a compound Poisson valuation and $Y$ is the remaining independent
  Poisson integral. The valuation $\widetilde\Phi$ has the same
  distribution as $\Phi$. Hence its outer distribution is concentrated
  on $\sigma$-continuous valuations.

  The three terms $\phi$, $X$, and $Y$ are monotone. Consider a
  realisation for which $\widetilde\Phi$ is $\sigma$-continuous, and
  let $K_n\downarrow K$. By monotonicity, the limits
  $\phi(K_n)\downarrow\phi_*$, $X(K_n)\downarrow X_*$, and
  $Y(K_n)\downarrow Y_*$ exist and satisfy $\phi_*\geq\phi(K)$,
  $X_*\geq X(K)$ and $Y_*\geq Y(K)$.  On the other hand, the
  $\sigma$-continuity of $\widetilde\Phi$ yields
  \begin{math}
    \phi_*+X_*+Y_*
    =
    \phi(K)+X(K)+Y(K).
  \end{math}
  Since all three defects are non-negative, each defect vanishes.
  Thus $X$ is $\sigma$-continuous.

  Let $\lambda=\Lambda(A_{K_0,\eps})$.  If $\lambda=0$, there is
  nothing to prove. If $\lambda>0$, condition on the event that the
  compound Poisson random measure defining $X$ has exactly one
  atom. On this event, $X=\psi$, where $\psi$ has distribution
  $\lambda^{-1}\Lambda|_{A_{K_0,\eps}}$. Since $X$ is almost surely
  $\sigma$-continuous, it follows that the outer measure associated
  with $\Lambda|_{A_{K_0,\eps}}$ is concentrated on
  $\sigma$-continuous valuations.

  Finally, let $\sD$ be the countable cofinal family of rational
  polytopes. Then the family of non-zero monotone valuations is a
  subset of the union of $A_{K_0,\eps}$ over all $K_0\in\sD$ and rational
  $\eps>0$. Thus, the $\Lambda$-outer measure of the family of
  valuations that are not $\sigma$-continuous is zero.
\end{proof}

\begin{example}
  Let $\{F_i,i\geq1\}$ be a point process on the family $\sC$ of
  non-empty closed convex sets in $\R^d$. If this process is locally finite,
  that is, only finitely many of the sets $F_i$ intersect any
  given compact set, then
  \begin{equation}
    \label{eq:sum-Fi}
    \Phi(K)=\sum_i \1_{K\cap F_i\neq\emptyset} 
  \end{equation}
  is an integer-valued monotone valuation. It is ID if the point
  process $\{F_i,i\geq1\}$ is infinitely divisible, meaning that, for
  any $n\geq2$, it can be represented as the superposition of $n$
  independent copies of another point process on $\sC$. It follows
  from \cite{ilien:mol:vis25} that on the plane all monotone
  $\sigma$-continuous integer-valued valuations are given by
  \eqref{eq:sum-Fi} with coefficients $r_i\in\{-1,1\}$ in front of
  the indicators.
\end{example}

\begin{example}
  If almost all realisations of $\Phi$ are continuous and invariant
  under rigid motions, then Corollary~\ref{cor:hadwiger} implies that
  it is infinitely divisible if and only if the vector of coefficients
  $(\xi_0,\dots,\xi_d)$ in \eqref{eq:hadwiger} is infinitely
  divisible.
\end{example}

\section{Independence of increments}
\label{sec:indep-incr}

As in the theory of random measures (see
\cite{kalle17}) and set-indexed processes (see \cite{MR1935676}), it
seems to be natural to consider valuations with completely independent
values, meaning that the values of $\Phi$ on disjoint sets are
independent. However, such random valuations are not very interesting
as Theorem~\ref{thr:indep-values} below shows.  In view of this, we impose
the following condition of independent increments.

A random valuation $\Phi$ is said to have \emph{independent
  increments} if, for each $n\geq 2$ and each nested sequence
$L_1\supset\cdots\supset L_n$ of compact convex sets and
$L_{n+1}=\emptyset$, the random variables $\Phi(L_i)-\Phi(L_{i+1})$,
$i=1,\dots,n$, are jointly independent.

If the paths of a non-trivial continuous random valuation $\Phi$ are
almost surely invariant under rigid motions, then
Corollary~\ref{cor:hadwiger} implies that $\Phi$ does not have
independent increments unless $\Phi(K)=\xi\1_{K\neq\emptyset}+\phi(K)$
for a random variable $\xi$ and a deterministic valuation $\phi$ which
is given by the weighted sum of the intrinsic volumes.  The same is
the case for random valuations whose realisations are continuous and
translation invariant, as the following result shows.

\begin{proposition}
  Assume that $\Phi$ is a continuous random valuation whose
  realisations are a.s.\ translation invariant. Then $\Phi$ has
  independent increments if and only if
  $\Phi(K)=\xi\1_{K\neq\emptyset}+\phi(K)$ where $\xi$ is a random
  variable and $\phi$ is a deterministic continuous
  translation-invariant valuation.
\end{proposition}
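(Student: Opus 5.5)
The ``if'' direction is direct. If $\Phi(K)=\xi\1_{K\neq\emptyset}+\phi(K)$ and $L_1\supset\cdots\supset L_n$, $L_{n+1}=\emptyset$, then for each $i$ with $L_{i+1}\neq\emptyset$ the increment $\Phi(L_i)-\Phi(L_{i+1})=\phi(L_i)-\phi(L_{i+1})$ is deterministic. The remaining increment is of the form $\xi+\phi(L_j)$ or $0$. A family of random variables in which all but one are constants is jointly independent, so $\Phi$ has independent increments.

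For the ``only if'' direction, set $\xi=\Phi(\{0\})$. By translation invariance, $\Phi(\{x\})=\xi$ a.s.\ for every $x$. Fix a non-empty $K\in\Kd$ with $0\in K$ and put
\begin{displaymath}
  p(r)=\Phi(rK)-\xi,\qquad r\ge 0 .
\end{displaymath}
By Proposition~\ref{prop:Macmullen}, $p(r)=\sum_{i=1}^d r^i\Phi_i(K)$ a.s., using also that $\Phi_0(K)=\Phi_0(\{0\})=\xi$. The degree-$0$ term is constant on non-empty sets, since $\Phi_0(K)=\Phi_0(rK)\to\Phi_0(\{0\})$ by continuity as $r\downarrow0$. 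Hence almost all paths $r\mapsto p(r)$ are polynomials of degree at most $d$ with $p(0)=0$.

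Since $0\in K$, the sets $rK$ are nested and increasing in $r$. For $0<r_1<\dots<r_n$, apply independence of increments to the chain $r_nK\supset\cdots\supset r_1K\supset\{0\}$. This shows that $p(r_1),\,p(r_2)-p(r_1),\dots,p(r_n)-p(r_{n-1})$ are jointly independent, and also independent of $\xi$. Thus $(p(r))_{r\ge0}$ is a process with independent increments and continuous, indeed polynomial, paths.

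The key step, and the one I expect to be the main obstacle, is to conclude that $p$ is deterministic. I would use the classical fact that a stochastically continuous process with independent increments and continuous paths is Gaussian. Moreover, its quadratic variation over $[0,t]$, namely the limit of $\sum_k (p(t(k+1)/n)-p(tk/n))^2$, equals the variance function $\operatorname{Var}(p(t))$, with the convergence in probability. For polynomial paths these sums are $O(1/n)$ pathwise, so the quadratic variation vanishes. Therefore $\operatorname{Var}p(t)=0$, and $p(t)$ is a.s.\ a constant $\phi(tK)$. If one wants to avoid the Gaussian/quadratic-variation machinery, an alternative is to use characteristic functions: $\log\E e^{\mathrm{i}u(p(t)-p(s))}$ is additive in the intervals, and smallness of polynomial increments forces the L\'evy--Khinchin triplet to be degenerate. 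In particular, $\Phi(K)-\xi=\phi(K)$ is deterministic for every $K\ni0$.

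Finally, for a general non-empty $K$, translate it to contain $0$ and define $\phi(K)=\phi(K-x)$ for $x\in K$; this is consistent and a.s.\ equal to $\Phi(K)-\xi$ by translation invariance of $\Phi$. Set $\phi(\emptyset)=0$. Then $\phi=\Phi-\xi\1_{\cdot\neq\emptyset}$ holds a.s.\ at every fixed $K$. Since $\Phi$ and $\xi\1_{K\neq\emptyset}$ are valuations, additivity of $\phi$ follows for each admissible pair, and $\phi$ is deterministic. Continuity and translation invariance of $\phi$ follow by evaluating along a single realisation in the full-probability event where $\Phi$ is continuous and translation invariant. This realisation-wise argument applies pointwise for each fixed sequence $K_n\to K$, since $\phi$ is non-random.
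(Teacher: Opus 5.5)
Your proof is correct, but the key step takes a different route from the paper's. The converse direction (all increments deterministic except the single one ending at $\emptyset$) is the same as in the paper.

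\textbf{The paper's route.} For necessity, the paper also starts from the McMullen decomposition. It then removes the random coefficients one degree at a time by a scaling argument at the origin. For $m=1$, the increments $\Phi(tK)-\Phi(\{0\})$ and $\Phi(2tK)-\Phi(tK)$ along the chain $\{0\}\subset tK\subset 2tK$ are independent. After division by $t$, both converge a.s.\ to $\Phi_1(K)$ as $t\downarrow 0$, so $\Phi_1(K)$ is independent of itself and hence constant. Induction on $m$ handles $\Phi_m(K)$: subtract the already deterministic lower-order terms and rescale by $t^{-m}$. This uses only two increments near the origin and the fact that a random variable independent of itself is degenerate.

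\textbf{Your route.} You use independence of increments along the whole ray $r\mapsto rK$. You invoke L\'evy's theorem that a process with independent increments and continuous paths is Gaussian. You then kill the variance through the vanishing quadratic variation of polynomial (locally Lipschitz) paths. This treats all degrees at once and would work for any paths of zero quadratic variation. The cost is heavier classical machinery.

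\textbf{One point to tighten.} For a continuous Gaussian process with independent increments, the sums of squared increments converge to $\operatorname{Var}p(t)$ only if the mean function has zero quadratic variation; a deterministic drift can contribute otherwise. In your setting this holds. Each $\Phi_i(K)$ is a linear combination of the Gaussian variables $p(r_1),\dots,p(r_d)$ for distinct $r_j>0$, so $\E p(t)$ is a polynomial in $t$. Alternatively, run the argument for $p-p'$ with $p'$ an independent copy, which is centred.

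The characteristic-function alternative you sketch is too vague to stand on its own, but your main argument does not need it.
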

\begin{proof}
  By Proposition~\ref{prop:Macmullen}, $\Phi$ admits the
  representation \eqref{eq:macmullen}.  Since $\Phi_0$ in
  \eqref{eq:macmullen} is homogeneous of degree zero, we have
  $\Phi_0(K)=\Phi_0(cK)$ for all $K\in\Kd$ and $c>0$. Letting
  $c\downarrow 0$ and using continuity of $\Phi_0$, we obtain that
  $\Phi_0(K)=\Phi_0(\{0\})=\xi$ if $K\neq\emptyset$. Furthermore,
  $\Phi_i(\{0\})=0$ for $i=1,\dots,d$. 
  
  Fix $K\in\Kd$ and, by translation invariance, assume that $0\in
  K$. For $t>0$ amd $m\in\{1,\dots,d\}$, set
  \begin{gather*}
    X_t= \frac{1}{t^m} \Bigl( \Phi(tK)-\Phi(\{0\})
    -\sum_{i=1}^{m-1}t^i\Phi_i(K) \Bigr),\\
    Y_t= \frac{1}{t^m} \Bigl( \Phi(2tK)-\Phi(tK)
    -\sum_{i=1}^{m-1}(2^i-1)t^i\Phi_i(K) \Bigr).
  \end{gather*}
  Let $m=1$. 
  Since $\{0\}\subset tK\subset 2tK$, the two corresponding increments
  of $\Phi$ are independent if $m=1$. Hence, $X_t$ and $Y_t$ are
  independent. By homogeneity, $X_t\to\Phi_1(K)$ and
  $Y_t\to\Phi_1(K)$ a.s.\ as $t\downarrow0$, and so $\Phi_1(K)$
  is deterministic. This holds for all $K$ from a
  countable dense family, hence, for all $K\in\Kd$ by continuity. For
  $m\geq2$, we apply the induction argument. 

  Conversely, if $\Phi(K)=\xi\1_{K\neq\emptyset}+\phi(K)$ with
  deterministic $\phi$, then along any decreasing chain all increments
  of $\phi$ are deterministic, while at most one increment of the
  first term is random. Hence the increments are jointly independent.
\end{proof}

The following result shows that for infinitely divisible random
valuations it suffices to check the independence of two successive
increments associated with every nested family of four sets.

\begin{proposition}
  \label{prop:indep-increments}
  An infinitely divisible random valuation has independent increments
  if and only if $\Phi(K_2)-\Phi(K_1)$ and $\Phi(K_4)-\Phi(K_3)$ are
  independent for all compact
  convex sets $K_1\subset K_2\subset K_3\subset K_4$. 
\end{proposition}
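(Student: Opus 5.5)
The plan is to reduce the statement to a classical fact about infinitely divisible random vectors: if the components of such a vector are pairwise independent, then they are jointly independent. This holds because, for an infinitely divisible vector, independence of the components is encoded in the L\'evy--Khinchin triplet. The Gaussian covariance matrix must be diagonal, and the L\'evy measure must be concentrated on the coordinate axes. Both conditions are checked one pair of coordinates at a time.

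The ``only if'' direction is immediate. Given $K_1\subset K_2\subset K_3\subset K_4$, consider the nested chain $K_4\supset K_3\supset K_2\supset K_1$ followed by $\emptyset$. Independent increments give the joint independence of the consecutive differences of $\Phi$ along this chain. In particular, $\Phi(K_4)-\Phi(K_3)$ and $\Phi(K_2)-\Phi(K_1)$ are independent.

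For the ``if'' direction, fix $L_1\supset\cdots\supset L_n$ and $L_{n+1}=\emptyset$, and set $D_i=\Phi(L_i)-\Phi(L_{i+1})$.
\begin{itemize}
\item Since $\Phi$ is infinitely divisible, the vector $(\Phi(L_1),\dots,\Phi(L_n))$ is infinitely divisible: its $n$-th roots are the corresponding marginals of $\Phi_{1,n},\dots,\Phi_{n,n}$. Its linear image $D=(D_1,\dots,D_n)$ is therefore infinitely divisible as well. Let $(A,\nu,b)$ be its L\'evy--Khinchin triplet with respect to a fixed truncation function (no non-negativity is assumed here, so I use characteristic functions).
\item For $i<j$, set $K_1=L_{j+1}\subset K_2=L_j\subset K_3=L_{i+1}\subset K_4=L_i$; this requires $j\ge i+1$, which holds, and allows $K_1=\emptyset$. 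The hypothesis then says that $D_i$ and $D_j$ are independent.
\item The pair $(D_i,D_j)$ is infinitely divisible. Its triplet is obtained from $(A,\nu,b)$ by projection: the covariance block $(A_{kl})_{k,l\in\{i,j\}}$, the image of $\nu$ under $x\mapsto(x_i,x_j)$ restricted to $\R^2\setminus\{0\}$, and a modified drift.
\item On the other hand, the product of the one-dimensional laws of $D_i$ and $D_j$ has a characteristic function of L\'evy--Khinchin form. In that triplet the covariance is diagonal and the L\'evy measure lives on the two axes; the drift adjustment caused by the truncation function is irrelevant.
\item By uniqueness of the triplet, $A_{ij}=0$ and $\nu(\{x\colon x_i\neq0,\,x_j\neq0\})=0$.
\end{itemize}

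Taking the union over all pairs $i<j$ shows that $\nu$ is concentrated on $\bigcup_i\{x\colon x_k=0 \text{ for } k\neq i\}$ and that $A$ is diagonal. The exponent in the L\'evy--Khinchin formula then splits into a sum of functions of the single coordinates $u_i$, so the characteristic function of $D$ factorises and $D_1,\dots,D_n$ are jointly independent.

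The main obstacle is the careful bookkeeping in the uniqueness step. One has to check that the projection of the L\'evy measure to two coordinates, after discarding the mass that lands at the origin, is exactly the L\'evy measure of the projected vector. One must also see that the truncation-dependent drift terms do not interfere. Both points are standard consequences of the uniqueness of the L\'evy--Khinchin representation (see \cite{sato}), and this is the step I would write out in detail.
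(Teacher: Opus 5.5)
Your proposal is correct and takes the same route as the paper, which simply cites the fact that pairwise independence implies joint independence for infinitely divisible random vectors (Exercise~12.10(ii) in \cite{sato}). Your triplet argument (diagonal Gaussian part and L\'evy measure concentrated on the axes, obtained pair by pair from uniqueness of the projected L\'evy--Khinchin triplet) is exactly the proof of that exercise, and your reduction of a pair of chain increments to the quadruple $L_{j+1}\subset L_j\subset L_{i+1}\subset L_i$ is correct.
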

\begin{proof}
  The result follows from the fact that for infinitely divisible
  random vectors joint independence is equivalent to pairwise
  independence, see Exercise~12.10(ii) from \cite{sato}. 
\end{proof}

\begin{lemma}
  \label{lemma:inf-div-indep}
  Let $\Phi$ be an ID valuation with the L\'evy measure $\Lambda$, and
  let $a_1,\dots,a_n$ and $b_1,\dots,b_m$ be real numbers and
  $K_1,\dots,K_n$ and $L_1,\dots,L_m$ be compact convex sets. Then the
  finite sums $\sum a_i\Phi(K_i)$ and $\sum b_j\Phi(L_j)$ are
  independent if and only if the $\Lambda$-outer measure vanishes on the
  complement of the set of valuations $\psi$ such that $\sum a_i\psi(K_i)=0$ or
  $\sum b_j\psi(L_j)=0$.
\end{lemma}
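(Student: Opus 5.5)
Since $\sI=\{K_1,\dots,K_n,L_1,\dots,L_m\}$ is a finite family, the pair $(X,Y)=(\sum a_i\Phi(K_i),\sum b_j\Phi(L_j))$ is a linear image of the infinitely divisible vector $\Phi_\sI$. It is therefore an infinitely divisible random vector in $\R^2$ without Gaussian part, since $\Phi$ is non-negative. My plan is to reduce everything to the classical criterion for independence of the components of an infinitely divisible vector: the components of an ID vector in $\R^2$ are independent if and only if the Gaussian covariance vanishes and the L\'evy measure is concentrated on the union of the two coordinate axes (cf.\ Sato, Exercise~12.10, already used in Proposition~\ref{prop:indep-increments}).

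The first step is to identify the L\'evy measure of $(X,Y)$. Let $T\colon\R^\sI\to\R^2$ be the linear map $x\mapsto(\sum a_ix_{K_i},\sum b_jx_{L_j})$. The characteristic function of $T\Phi_\sI$ is obtained from that of $\Phi_\sI$ by composing with $T^\top$. Working with characteristic functions, since the $a_i,b_j$ may be signed, and using that $\Lambda_\sI$ integrates $\min(1,|x|)$, the L\'evy--Khinchin triple of $(X,Y)$ has the following form. Its L\'evy measure is the image $T_*\Lambda_\sI$ restricted to $\R^2\setminus\{0\}$, and its drift is $T\phi_\sI$ adjusted by a compensating term, which is irrelevant for the criterion. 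By Theorem~\ref{thr:LH}, $\Lambda_\sI$ is the projection of $\Lambda$ onto the coordinates in $\sI$. Hence the L\'evy measure of $(X,Y)$ is the image of $\Lambda$ under $\psi\mapsto(\sum a_i\psi(K_i),\sum b_j\psi(L_j))$, with the origin removed.

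The second step applies the criterion. The vector $(X,Y)$ has independent components if and only if $T_*\Lambda_\sI$ gives no mass to $\{(x,y)\colon x\neq0,\,y\neq0\}$. Pulling this back, the condition becomes
\begin{displaymath}
  \Lambda\Bigl(\Bigl\{\psi\colon \sum a_i\psi(K_i)\neq0,\ \sum b_j\psi(L_j)\neq0\Bigr\}\Bigr)=0 .
\end{displaymath}
This set is cylindrical, i.e.\ it lies in $\sB^{\Kd}$, so $\Lambda$ is defined on it. Its measure coincides with the outer measure, and the condition is exactly that the $\Lambda$-outer measure vanishes on the complement of $\{\psi\colon\sum a_i\psi(K_i)=0 \text{ or } \sum b_j\psi(L_j)=0\}$. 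The origin $O_{\Kd}$ lies in the latter set, so (L2) causes no issue.

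The only point needing care, and the one I expect to be the main obstacle, is the first step. I must justify that pushing forward the L\'evy--Khinchin representation of $\Phi_\sI$ under a signed linear map yields an honest L\'evy--Khinchin representation. The image measure is finite away from any neighbourhood of $0$ and integrates $\min(1,|z|)$, because $|Tx|\le C|x|$, so the triple is well defined. Uniqueness of the triple then guarantees that the criterion may be applied to this pushed-forward measure. The proof of the criterion itself is to factor the characteristic function into the contributions from the two axes and from their complement. The implication that mass on the axes gives independence is immediate from this factorisation. The converse uses uniqueness: independence forces the characteristic function to equal the product of the marginal ones, whose L\'evy measure lives on the axes.
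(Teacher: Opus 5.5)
Your proof is correct and takes the same route as the paper, which simply cites Sato's Exercise~12.10. You additionally spell out what the paper leaves implicit: the L\'evy measure of the linear image $(X,Y)$ is the pushforward of $\Lambda_\sI$ with the origin removed, and the projection property from Theorem~\ref{thr:LH} pulls this back to $\Lambda$ of the cylindrical set $\{\psi\colon\sum a_i\psi(K_i)\neq0,\ \sum b_j\psi(L_j)\neq0\}$. (Strictly, the criterion only needs the off-diagonal Gaussian covariance to vanish, but this is moot since there is no Gaussian part.)
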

\begin{proof}
  The statement follows from the already mentioned Exercise~12.10 from
  \cite{sato}. 
\end{proof}

The following result shows that it is possible to weaken the
independence of increments property if $\Phi$ is assumed to be
monotone. 

\begin{proposition}
  \label{prop:weaker-indep}
  Let $\Phi$ be a monotone ID 
  valuation. Then $\Phi$ has independent increments if and only if
  $\Phi(K)-\Phi(L)$ and $\Phi(L)$ are independent for all $K,L\in\Kd$
  such that $L\subset K$.
\end{proposition}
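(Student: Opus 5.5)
The forward implication is immediate. If $\Phi$ has independent increments, apply the definition to the chain $L_1=K\supset L_2=L$, $L_3=\emptyset$. This shows that $\Phi(K)-\Phi(L)$ and $\Phi(L)-\Phi(\emptyset)=\Phi(L)$ are independent.

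For the converse, by Proposition~\ref{prop:indep-increments} it suffices to prove one thing: $\Phi(K_2)-\Phi(K_1)$ and $\Phi(K_4)-\Phi(K_3)$ are independent for every chain $K_1\subset K_2\subset K_3\subset K_4$. We translate both the hypothesis and the goal into statements about the L\'evy measure via Lemma~\ref{lemma:inf-div-indep}.

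The hypothesis, applied to the pair $L\subset K$, says that the $\Lambda$-outer measure is concentrated on valuations $\psi$ with $\psi(K)-\psi(L)=0$ or $\psi(L)=0$. By Theorem~\ref{thr:LH}, $\Lambda$ is also concentrated on monotone non-negative valuations. We use the hypothesis on the three pairs $(K_1,K_2)$, $(K_2,K_3)$ and $(K_3,K_4)$. A finite intersection of sets of full outer measure has full outer measure: each complement is outer-null, and a finite union of outer-null sets is outer-null. Hence $\Lambda$-almost every $\psi$ (in the outer sense) is monotone and satisfies, for each $i=1,2,3$, either $\psi(K_{i+1})=\psi(K_i)$ or $\psi(K_i)=0$.

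Fix such a $\psi$ and suppose $\psi(K_2)-\psi(K_1)\neq0$. We must show $\psi(K_4)-\psi(K_3)=0$.
\begin{itemize}
\item Since $\psi(K_2)\neq\psi(K_1)$, the alternative for $i=1$ forces $\psi(K_1)=0$, and therefore $\psi(K_2)>0$.
\item By monotonicity, $\psi(K_3)\ge\psi(K_2)>0$.
\item So the alternative for $i=3$ forces $\psi(K_4)=\psi(K_3)$.
\end{itemize}
Only the pair $(K_3,K_4)$ together with monotonicity is really needed; the pair $(K_2,K_3)$ is not used. Thus, outside an outer-null set, $\psi(K_2)-\psi(K_1)=0$ or $\psi(K_4)-\psi(K_3)=0$. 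Lemma~\ref{lemma:inf-div-indep} then yields the required independence, and Proposition~\ref{prop:indep-increments} gives independent increments.

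The main obstacle is the measure-theoretic bookkeeping with outer measures. We must check that the conditions from different pairs can be combined, and that monotonicity holds simultaneously on the relevant sets. This works because only finitely many sets $K_i$ are involved, so everything reduces to finite-dimensional projections $\Lambda_\sI$, where ordinary measures apply. In particular, I would phrase the argument directly for $\Lambda_\sI$ with $\sI=\{K_1,\dots,K_4\}$. There the supports are genuine closed subsets of $\R_+^4$, and monotonicity reads $x_1\le x_2\le x_3\le x_4$ for $\Lambda_\sI$-almost all $x$.
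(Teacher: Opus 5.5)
Your proof is correct and follows the paper's route. You translate the hypothesis into a support condition on $\Lambda$ via Lemma~\ref{lemma:inf-div-indep}, combine it with the monotonicity of $\Lambda$-almost all $\psi$ from Theorem~\ref{thr:LH}, and conclude with Proposition~\ref{prop:indep-increments}. The only difference is which pair you feed into the hypothesis. The paper uses the single pair $K_2\subset K_4$, so $\psi(K_2)=0$ or $\psi(K_2)=\psi(K_4)$, and each alternative kills one increment by monotonicity. You use $K_3\subset K_4$ together with $\psi(K_2)>\psi(K_1)\ge 0$, which, as you observe, makes the other pairs unnecessary.
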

\begin{proof}
  We need to prove only sufficiency. Let
  $K_1\subset K_2\subset K_3\subset K_4$.
  By assumption, $\Phi(K_4)-\Phi(K_2)$ and $\Phi(K_2)$ are independent.
  By Theorem~\ref{thr:LH} and
  Lemma~\ref{lemma:inf-div-indep}, $\Lambda$ is concentrated on
  monotone valuations $\psi$ such that $\psi(K_2)=0$ or
  $\psi(K_2)=\psi(K_4)$. 
  In the first case, monotonicity yields $\psi(K_2)-\psi(K_1)=0$,
  while in the second case
  $\psi(K_4)-\psi(K_3)=0$.
  Lemma~\ref{lemma:inf-div-indep} therefore implies that
  $\Phi(K_2)-\Phi(K_1)$ and $\Phi(K_4)-\Phi(K_3)$ are independent.
  The claim follows from Proposition~\ref{prop:indep-increments}.
\end{proof}

Next, we characterise ID valuations with independent values. Recall
that a random measure is said to be \emph{completely random} if its
values on disjoint sets are independent, see
\cite[Section~15.3]{LastPenrose17}. 

\begin{theorem}
  \label{thr:indep-values}
  Assume that $\Phi$ is a $\sigma$-continuous monotone ID random
  valuation with vanishing deterministic part and such that $\Phi(K)$
  and $\Phi(L)$ are independent for disjoint $K,L\in\Kd$. Then
  $\Phi$ has the same finite-dimensional distributions
  as the restriction to $\Kd$ of
  a completely random measure $\eta$ on $\R^d$.
\end{theorem}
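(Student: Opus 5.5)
The plan is to work at the level of the L\'evy measure $\Lambda$ from Theorem~\ref{thr:LH}, show that it lives on valuations of the form $r\1_{x\in K}$, and then build $\eta$ from a Poisson process of weighted points.

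\textbf{Step 1: support of $\Lambda$.} By Lemma~\ref{lemma:inf-div-indep}, independence of $\Phi(K)$ and $\Phi(L)$ for disjoint $K,L$ means the $\Lambda$-outer measure is carried by valuations $\psi$ with $\psi(K)\psi(L)=0$ for every such pair. Theorem~\ref{thr:LH} and Lemma~\ref{lemma:psi-sigma} add that $\psi$ may be taken monotone and $\sigma$-continuous. To avoid outer-measure issues, first impose these conditions only on the countable family $\sD$ of rational polytopes. Then extend to all of $\Kd$ by approximating from above, using $\sigma$-continuity and monotonicity of $\psi$.

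\textbf{Step 2: each such $\psi$ is a point mass.} Fix a nonzero $\psi$ with these properties, and let $\mathcal S$ be the family of $K\in\Kd$ with $\psi(K)>0$.
\begin{itemize}
\item Any two members of $\mathcal S$ intersect.
\item Cut $K\in\mathcal S$ by a hyperplane into two convex pieces $K',K''$ whose union is $K$. Additivity gives $\psi(K)=\psi(K')+\psi(K'')-\psi(K'\cap K'')$, and monotonicity forces at least one of $K',K''$ to lie in $\mathcal S$.
\item Bisecting repeatedly in all coordinate directions gives a decreasing sequence of boxes intersected with $K$, all in $\mathcal S$, with diameters tending to zero. They shrink to a point $x$, and $\sigma$-continuity gives $\psi(\{x\})=\lim\psi(K_n)>0$.
\item Since two disjoint singletons cannot both carry positive mass, $x$ is unique. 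Every $K\in\mathcal S$ must meet $\{x\}$, so $\psi(K)=0$ whenever $x\notin K$.
\item For $x\in K$, set $r=\psi(\{x\})$. Then $\psi(K)-r$ is the $\psi$-value of the ``increment'', which I expect to vanish by repeated bisection and $\sigma$-continuity: pieces of $K$ not containing $x$ contribute nothing, and the pieces containing $x$ shrink to $\{x\}$.
\end{itemize}
The last point is the main obstacle. The bisection must be organised so that every piece is convex and every union used in additivity is admissible. I would cut by hyperplanes through $x$, for which $K=K'\cup K''$ is admissible with $K'\cap K''\ni x$, so that $\psi(K)=\psi(K')+\psi(K'')-\psi(K'\cap K'')$. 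Iterating this gives $\psi(K)=\psi(K\cap B_\eps(x)\text{-box})$, and letting $\eps\downarrow0$ yields $\psi(K)=r$. Hence $\psi=r\1_{x\in K}$.

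\textbf{Step 3: construction of $\eta$.} The map $(x,r)\mapsto r\1_{x\in\cdot}$ is injective, so $\Lambda$ corresponds to a measure $\nu$ on $\R^d\times(0,\infty)$. Condition~(L1) gives $\int\min(1,r)\nu(dx\,dr)<\infty$ over $K\times(0,\infty)$ for compact $K$. Let $\Pi$ be a Poisson process with intensity $\nu$ and set $\eta=\int r\delta_x\,\Pi(dx\,dr)$. This is a completely random measure, locally finite by the integrability condition. Its restriction to $\Kd$ has Laplace functionals matching those in Theorem~\ref{thr:LH} with $\phi=0$, so the finite-dimensional distributions coincide.
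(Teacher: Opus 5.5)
Your overall route matches the paper: restrict the support of $\Lambda$ via Lemma~\ref{lemma:inf-div-indep} and Lemma~\ref{lemma:psi-sigma}, identify each $\psi$ as $r\1_{x\in\cdot}$, then use a Poisson process of weighted points. However, Step~2 has a genuine gap.

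\textbf{The gap.} Bisection only guarantees that one of $K',K''$ lies in $\mathcal S$, i.e.\ $\psi(K_n)>0$ for each $n$. It gives no lower bound: from $\psi(K)\le\psi(K')+\psi(K'')$ you only get $\max(\psi(K'),\psi(K''))\ge\psi(K)/2$, so $\psi(K_n)$ may decrease to $0$. Hence ``$\psi(\{x\})=\lim\psi(K_n)>0$'' does not follow. Note that this bullet uses only additivity, monotonicity and $\sigma$-continuity, never the disjointness hypothesis. Lebesgue measure restricted to $\Kd$ has all those properties and no atoms, so disjointness must enter exactly here. Your next bullet (uniqueness of $x$, and $\psi(K)=0$ for $x\notin K$) relies on $\psi(\{x\})>0$, so it is not established either.

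\textbf{The paper's fix.} The paper cuts $K$ by two parallel hyperplanes into $L_1,L_2,L_3$ with $L_1\cap L_3=\emptyset$. One of $\psi(L_1),\psi(L_3)$ vanishes. Additivity and monotonicity then give $\psi(L_2\cup L_3)=\psi(K)$ or $\psi(L_1\cup L_2)=\psi(K)$. So the full value $c=\psi(K)$ is kept along the nested sequence, and the limit is $c>0$.

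\textbf{A fix within your bisection.} Alternatively, keep your bisection but reorder the argument. Members of $\mathcal S$ pairwise intersect, so every $L\in\mathcal S$ meets every $K_n$. By nested compactness, $x\in L$. Hence $\psi(M)=0$ whenever $x\notin M$, with no positivity of $\psi(\{x\})$ needed.

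\textbf{The last bullet.} Cutting by hyperplanes \emph{through} $x$ removes nothing. All of $K',K'',K'\cap K''$ contain $x$, and $\psi(K)=\psi(K')+\psi(K'')-\psi(K'\cap K'')$ does not shrink $K$. You need hyperplanes \emph{missing} $x$. If $x\in K'$ and $x\notin K''$, then $\psi(K'')=\psi(K'\cap K'')=0$, so $\psi(K)=\psi(K')$. Finitely many such cuts give $\psi(K)=\psi\bigl(K\cap(x+[-\eps,\eps]^d)\bigr)$. Then $\sigma$-continuity yields $\psi(K)=\psi(\{x\})$. Applied to $K\in\mathcal S$, this also supplies the missing positivity $\psi(\{x\})>0$. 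This is the paper's ``cut off convex pieces not containing $x$''.

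\textbf{A minor point in Step~3.} Injectivity of $(x,r)\mapsto r\1_{x\in\cdot}$ is not enough to transport $\Lambda$ to a measure on $\R^d\times(0,\infty)$. You need bimeasurability with respect to the cylindrical $\sigma$-algebra, as the paper asserts. This matters especially because $\Lambda$ is only outer-concentrated on these valuations.
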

\begin{proof}
  By Lemma~\ref{lemma:psi-sigma}, the L\'evy measure
  $\Lambda$ of $\Phi$ is supported by $\sigma$-continuous
  valuations. By Lemma~\ref{lemma:inf-div-indep}, outside of a set of
  zero $\Lambda$-outer measure, $\psi$ is monotone and
  $\sigma$-continuous and $\psi(K)=0$ or
  $\psi(L)=0$ for every disjoint pair from the countable family $\sD$
  of rational polytopes. This extends to arbitrary disjoint
  $K,L\in\Kd$ by taking their outer approximations. 
  
  For each $\psi$,   
  there exists $K\in\Kd$ such that
  $c=\psi(K)>0$. Without loss of generality, assume that $K$ is not a
  singleton. By splitting $K$ into three parts $L_1,L_2,L_3$ using two
  parallel hyperplanes, we see that $L_1$ and $L_3$ are disjoint. If
  $\psi(L_1)=0$, then the additivity and monotonicity imply
  $\psi(L_2\cup L_3)=\psi(K)$. If $\psi(L_1)>0$, then $\psi(L_3)=0$,
  consequently $\psi(L_1\cup L_2)=\psi(K)$. Continuing this procedure,
  we obtain a decreasing family of sets which shrinks to a point
  $x\in K$ with $\psi(\{x\})=\psi(K)=c$, also taking into account the
  $\sigma$-continuity of $\psi$. Then $\psi(L)=0$ for every $L\in\Kd$
  with $x\notin L$.

  Let $L\in\Kd$ contain $x$. By monotonicity, $\psi(L)\geq c$.
  Successively cut off convex pieces not containing $x$, retaining at
  each step a convex set $L_n\ni x$ with $\psi(L_n)=\psi(L)$. By
  $\sigma$-continuity, $\psi(L)=c$. Consequently,
  $\psi(L)=c\1_{x\in L}$ for all $L\in\Kd$. Thus, the $\Lambda$-outer
  measure is supported by valuations $\psi$ with this property. 

  Each $\psi$ of this form can be represented as a pair
  $(x,c)\in \R^d\times(0,\infty)$ and the map
  $(x,r)\mapsto r\1_{x\in K}$ and its inverse are measurable with
  respect to the cylindrical $\sigma$-algebra.  Thus, the L\'evy
  measure of $\Phi$ is the image of a measure on
  $\R^d\times(0,\infty)$ under this map.  Therefore, the Poisson
  random measure $N$ in \eqref{eq:Poisson-sum} can be realised as the
  pushforward of a Poisson point process $\{(x_i,r_i): i\ge1\}$
  on $\R^d\times(0,\infty)$, and thus $\Phi$ has the same distribution
  as  
  \begin{equation}
    \label{eq:points}
    \eta(K)=\sum_{i} r_i\1_{x_i\in K}, \quad K\in\Kd,
  \end{equation}
  which is a completely random measure
  restricted to $\Kd$, see \cite[Section~15.3]{LastPenrose17}.
\end{proof}

While the complete independence assumption reduces $\sigma$-continuous
monotone ID valuations to completely random measures, the independence
of increments property identifies a richer family of random
valuations, as the following section shows.

\section{Monotone $\sigma$-continuous ID valuations with independent
  increments}
\label{sec:monot-id-valu}

If $\Phi$ is monotone, then $\Phi(K)\geq \Phi(\emptyset)=0$ for all
$K\in\Kd$ and so $\Phi$ is non-negative.  For deterministic
valuations, the non-negativity property does not imply monotonicity,
e.g., for compact convex sets $M\subset L$, the valuation
$\phi(K)=\1_{K\cap L\neq\emptyset}-\1_{K\cap M\neq\emptyset}$ is
non-negative, but not monotone. However, such a situation is
impossible for ID valuations with independent increments.

\begin{proposition}
  If an ID valuation $\Phi$ with independent increments is
  non-negative and has vanishing deterministic part, then $\Phi$ is
  necessarily monotone.
\end{proposition}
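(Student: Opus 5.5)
The plan is to show that, for every pair $L\subset K$ in $\Kd$, the increment $\Phi(K)-\Phi(L)$ is almost surely non-negative. Since $\Phi$ is separable, it then suffices to prove this for $K,L$ in the countable family $\sD$ of rational polytopes and to pass to general sets via the approximating sequences in the separability property. Monotonicity along nested pairs is exactly monotonicity of the valuation.

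Fix $L\subset K$ and apply the independent increments property to the nested chain $K\supset L\supset\emptyset$. This gives that $X=\Phi(K)-\Phi(L)$ and $Y=\Phi(L)$ are independent. Both are components of the infinitely divisible vector $(\Phi(K),\Phi(L))$, so each is infinitely divisible. Because the deterministic part vanishes, Theorem~\ref{thr:LH} gives Laplace transforms with zero drift. Thus $\Phi(L)$ is an infinitely divisible non-negative variable with no drift, so its essential infimum is $0$ (its law has $\Lambda$-jumps only, and $\essinf\Phi(L)=\phi(L)=0$). Likewise $\essinf \Phi(K)=0$.

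Now suppose $\Prob{X<0}>0$, and take $c>0$ with $\Prob{X<-c}>0$. Independence then gives
\begin{displaymath}
  \Prob{\Phi(K)<-c/2}\ge \Prob{X<-c}\,\Prob{Y<c/2}.
\end{displaymath}
The second factor is positive because $\essinf Y=0$. This contradicts $\Phi(K)\ge0$ a.s., so $X\ge0$ a.s., which is the required monotonicity in distribution for each fixed pair.

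The main subtlety I expect is justifying $\Prob{\Phi(L)<\delta}>0$ for all $\delta>0$, that is, that a drift-free non-negative ID variable has essential infimum zero. I would argue this from the Poisson representation \eqref{eq:Poisson-sum}. With positive probability there are no atoms $\psi$ with $\psi(L)>\delta/2$, and the remaining small-jump integral has mean $\int\psi(L)\1_{\psi(L)\le\delta/2}\Lambda(d\psi)$. This mean can be made small by lowering the truncation level, using (L1) and dominated convergence. Markov's inequality together with the independence of the two Poisson restrictions then gives the claim.

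A cleaner alternative is to use the remark that the deterministic part equals $\essinf\Phi(L)$, which yields $\essinf\Phi(L)=0$ directly. Finally, to upgrade from a.s.\ for each pair to pathwise monotonicity, I would use separability: take the countable family $\sD$, intersect the countably many full-probability events, and approximate general nested sets by decreasing sequences from $\sD$ along which $\Phi$ converges.
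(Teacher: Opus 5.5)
Your proposal is correct and follows the paper's proof essentially step by step. You get independence of $\Phi(K)-\Phi(L)$ and $\Phi(L)$ from the chain $K\supset L\supset\emptyset$, take $\essinf\Phi(L)=0$ from the vanishing deterministic part, bound $\Prob{\Phi(K)<0}$ from below by a product to reach a contradiction, and use separability to pass to pathwise monotonicity; your extra Poisson-representation argument for $\essinf\Phi(L)=0$ is sound but not needed, since the paper simply uses that the deterministic part equals the essential infimum.
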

\begin{proof}
  For $L\subset K$ put $X=\Phi(K)-\Phi(L)$ and $Y=\Phi(L)$. By
  independent increments, $X$ and $Y$ are independent. Since the
  deterministic part vanishes, the essential infimum of $Y$ is
  zero. Moreover, $Y\geq 0$ and $X+Y\geq 0$. If $X<-\eps$ with a
  positive probability, then $\Prob{Y<\eps}>0$ and independence would
  give that $\Prob{X+Y<0}>0$, a contradiction. Hence $X\geq 0$
  a.s. Separability then gives pathwise monotonicity on the common
  full-probability event. 
\end{proof}

The following result provides a complete description of
$\sigma$-continuous monotone ID valuations with independent
increments. Recall that $\sC$ denotes the family of all non-empty closed convex
sets in $\R^d$.

\begin{theorem}
  \label{thr:2}
  A monotone $\sigma$-continuous ID valuation $\Phi$ has independent
  increments if and only if its L\'evy measure $\Lambda$ is the
  pushforward of a Borel measure $\nu$ on $\sC\times(0,\infty)$ by the
  map
  \begin{equation}
    \label{eq:LM-indicators}
    (F,r)\mapsto \psi(K)=r \1_{K\cap F\neq\emptyset},\quad K\in\Kd.
  \end{equation}
  Furthermore, $\Phi$ has the same distribution as
  \begin{equation}
    \label{eq:positive-sum}
    \tilde{\Phi}(K)=\phi(K)
    +\sum_{i} r_i\1_{K\cap F_i\neq\emptyset},
    \quad K\in\Kd,
  \end{equation}
  where $\{(F_i,r_i),i\geq1\}$ is the Poisson point process on
  $\sC\times(0,\infty)$ with intensity $\nu$ and $\phi$ is a
  deterministic monotone $\sigma$-continuous valuation.  The
  measure $\nu$ necessarily satisfies
  \begin{equation}
    \label{eq:nu-levy}
    \int_{\sC\times(0,\infty)} \min(1,r)\1_{F\cap K\neq\emptyset}\,\nu(d(F,r))<\infty,
    \quad K\in\Kd. 
  \end{equation}
\end{theorem}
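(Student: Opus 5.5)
The plan is to prove sufficiency first, since it is routine. If $\Lambda$ is the pushforward of $\nu$ under \eqref{eq:LM-indicators}, then every $\psi$ in its support has the form $r\1_{K\cap F\neq\emptyset}$. Take $L\subset K$. Either $F\cap L\neq\emptyset$, so $\psi(K)-\psi(L)=0$, or $F\cap L=\emptyset$, so $\psi(L)=0$. By Lemma~\ref{lemma:inf-div-indep}, $\Phi(K)-\Phi(L)$ and $\Phi(L)$ are then independent, and Proposition~\ref{prop:weaker-indep} gives independent increments. The deterministic part $\phi$ plays no role in independence.

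For necessity I will proceed as follows.
\begin{enumerate}
\item By Theorem~\ref{thr:LH} and Lemma~\ref{lemma:psi-sigma}, outside a set of $\Lambda$-outer measure zero, $\psi$ is a monotone $\sigma$-continuous valuation.
\item Lemma~\ref{lemma:inf-div-indep}, applied to $L\subset K$ taken from the countable family $\sD$ of rational polytopes, shows that for $\Lambda$-almost all $\psi$ we have $\psi(L)=0$ or $\psi(L)=\psi(K)$ for every such pair.
\item Extend this to all $L\subset K$ in $\Kd$ using $\sigma$-continuity. Approximate $K$ and $L$ from outside by decreasing rational polytopes $L_n\subset K_n$; this is possible by taking intersections as in Lemma~\ref{lemma:separable}. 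The dichotomy passes to the limit because one of the two alternatives holds for infinitely many $n$.
\item Set $r=\sup_K\psi(K)$. Any two nonzero values are comparable after intersecting with a large ball, and the dichotomy forces all nonzero values of $\psi$ restricted to a ball $B_R$ to coincide, equal to $\psi(B_R)$. These values are nondecreasing in $R$ and, again by the dichotomy, constant once positive. Hence $\psi=r\1_{\psi>0}$ is two-valued.
\end{enumerate}

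The main step is to identify the set on which $\psi>0$ as a hitting family. Define $F=\{x\in\R^d:\psi(K)>0\text{ for every }K\in\Kd\text{ with }x\in\operatorname{int}K\}$, or more safely $F=\bigcap\{\text{closed convex }C:\psi(K)=0\text{ whenever }K\cap C=\emptyset\}$. I would proceed through the following claims.
\begin{itemize}
\item If $\psi(K)=r$, cutting $K$ by a hyperplane into $K_1\cup K_2$ and using additivity gives $\psi(K_1)+\psi(K_2)=r+\psi(K_1\cap K_2)$. By the two-valuedness, some piece retains the value $r$.
\item Iterating this cut and using $\sigma$-continuity together with compactness shows that $\psi(\{x\})=r$ for some point $x$ whenever $\psi(K)=r$. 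So $\psi(K)=r$ if and only if $K$ contains a point $x$ with $\psi(\{x\})=r$.
\item Let $F=\{x:\psi(\{x\})=r\}$. It is closed by $\sigma$-continuity: if $x_n\to x$ and $x\notin F$, small balls around $x$ have value $0$, so they contain no $x_n$.
\item $F$ is convex: for $x,y\in F$, a point $z$ of the segment $[x,y]$ satisfies, by additivity on the admissible pair $[x,z]$, $[z,y]$, the identity $r+r=r+\psi(\{z\})$. Hence $\psi(\{z\})=r$.
\end{itemize}
Thus $\psi=r\1_{K\cap F\neq\emptyset}$, with $F\neq\emptyset$ since $\psi\neq O_{\Kd}$. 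This geometric step is the one I expect to be the main obstacle, in particular handling the case where only unbounded $F$ work, and being careful with outer measure.

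Finally, measurability must be settled. The map $(F,r)\mapsto r\1_{K\cap F\neq\emptyset}$ is measurable from the Fell–Borel $\sigma$-algebra to the cylindrical one, since $\{F:F\cap K\neq\emptyset\}$ is Fell-closed. Its inverse on the image is measurable because $F$ is recovered through countably many hitting events on $\sD$, and $r=\sup_{K\in\sD}\psi(K)$. Therefore $\Lambda$ is the pushforward of a measure $\nu$. Condition \eqref{eq:nu-levy} is exactly (L1). The Poisson representation \eqref{eq:positive-sum} then follows from \eqref{eq:Poisson-sum} by mapping a Poisson process on $\sC\times(0,\infty)$, exactly as in Theorem~\ref{thr:indep-values}, adding back $\phi$. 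The deterministic part $\phi$ is monotone by Theorem~\ref{thr:LH}, and $\sigma$-continuous by the defect argument in Lemma~\ref{lemma:psi-sigma}.
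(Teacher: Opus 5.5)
Your proposal is correct and follows the paper's proof essentially step by step. The shared steps are: the dichotomy $\psi(L)=0$ or $\psi(L)=\psi(K)$ for $L\subset K$ from Proposition~\ref{prop:weaker-indep} and Lemma~\ref{lemma:inf-div-indep}; two-valuedness by comparing with large containing sets; identification of $F=\{x:\psi(\{x\})=r\}$ as a non-empty closed convex set; bimeasurability of $(F,r)\mapsto r\1_{K\cap F\neq\emptyset}$; and the Poisson representation, with (L1) giving \eqref{eq:nu-levy}. The only differences are that you prove directly, by hyperplane bisection and $\sigma$-continuity, that such a two-valued valuation is determined by its point values (as the paper itself argues in Theorem~\ref{thr:indep-values}, whereas here it cites \cite{ilien:mol:vis25}), and that you spell out the passage of the dichotomy from $\sD$ to all of $\Kd$, which the paper only sketches.
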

\begin{proof}
  Since the deterministic part $\phi(K)=\essinf\Phi(K)$ is a
  deterministic valuation, we replace $\Phi$ by $\Psi=\Phi-\phi$ and
  assume that $\Phi$ does not have a deterministic part.  If
  $K_n\downarrow K$, then 
  \begin{displaymath}
    [\phi(K_n)-\phi(K)]+[\Psi(K_n)-\Psi(K)]=\Phi(K_n)-\Phi(K)\to 0,
  \end{displaymath}
  so each of the non-negative terms on the left-hand side converges to
  zero, meaning that $\phi$ and $\Psi$ are $\sigma$-continuous.

  By Proposition~\ref{prop:weaker-indep} and
  Lemma~\ref{lemma:inf-div-indep}, $\Phi$ has independent increments
  if and only if $\Lambda$-outer measure is supported by $\psi$ such that
  \begin{equation}
    \label{eq:choice}
    \psi(K)=\psi(L) \quad \text{or}\quad \psi(L)=0
  \end{equation}
  for all $K,L\in\Kd$ with $L\subset K$. Let $K\in\Kd$ with
  $c=\psi(K)\neq 0$. If $K\subset W$ for $W\in\Kd$, then
  $\psi(W)=c$. For each $L\in\Kd$,
  take $W\in\Kd$ such that $(K\cup L)\subset W$. Then $\psi(L)$ is
  either zero or $\psi(L)=\psi(W)=c$. By applying this to a countable
  dense collection $K,L\in\sD$ and using $\sigma$-continuity of
  $\psi$, we obtain that the $\Lambda$-outer measure is supported by
  non-decreasing valuations $\psi$ with two values $\{0,c\}$ with
  $c>0$ depending on $\psi$. By Lemma~\ref{lemma:psi-sigma}, all such
  $\psi$ are $\sigma$-continuous.

  It follows from \cite{ilien:mol:vis25} that a non-negative
  $\sigma$-continuous valuation $\psi$ with two values is uniquely
  determined by its support, which is the set $F$ of all points $x$
  such that $\psi(\{x\})>0$. Hence,
  $\psi(K)=c\1_{F\cap K\neq\emptyset}$ for some $c>0$.  This set $F$
  is necessarily nonempty, since otherwise $\psi$ would have empty
  support and so would be zero. By $\sigma$-continuity of $\psi$ we
  immediately obtain that $F$ is closed. Indeed, if $x_n\in F$ and
  $x_n\to x$, we let $K_n$ be the convex hull of $\{x\}$ and
  $\{x_i,i\geq n\}$, so that $c=\psi(K_n)\to\psi(\{x\})$ and so
  $x\in F$.
  
  Let $L=\conv(\{x,y\})$ for any $x,y\in F$, and choose $z\in
  L$. Since $\psi$ is non-decreasing and takes only two values $0$ and
  $c$, we have $\psi(L)=c$. Similarly, $\psi(L_1)=\psi(L_2)=c$ for the
  segments with end-points $x,z$ and $y,z$, respectively. Additivity
  therefore yields that $\psi(\{z\})=c$, so $z\in F$.  Thus, $F$ is a
  convex set. The map between $\psi(K)=r\1_{F\cap K\neq\emptyset}$,
  $K\in\Kd$, with its cylindrical $\sigma$-algebra and the pair
  $(F,r)\in(\sC,(0,\infty))$ with the product $\sigma$-algebra
  is bimeasurable. Hence, the L\'evy measure of $\Phi$ corresponds to
  a measure on $\sC\times(0,\infty)$.
  
  Property \eqref{eq:nu-levy} follows from \eqref{eq:Lambda-int-1}.
  The space $\sC\times(0,\infty)$ is a standard Borel space and so a
  Poisson random measure with intensity $\nu$ can be realised
  as a Poisson point process $\eta$ on this space, see
  \cite{LastPenrose17}. Thus, 
  the integral in \eqref{eq:Poisson-sum} can be represented as the sum
  \eqref{eq:positive-sum}.

  In the reverse direction, the increments of $\tilde{\Phi}$ are
  determined by the Poisson process on $\sC\times(0,\infty)$ over
  disjoint sets and so are independent. 
\end{proof}

The Poisson process $\{(F_i,r_i),i\geq1\}$ on $\sC\times(0,\infty)$
from Theorem~\ref{thr:2} defines a pure-jump random measure
$Z$ on the family $\sC$ of closed convex sets in $\R^d$ by letting
\begin{displaymath}
  Z(\sM)=\sum_{i} r_i \1_{F_i\in\sM}
\end{displaymath}
for all measurable $\sM\subset\sC$. This random measure on $\sC$ is
completely random, meaning that its values on disjoint subsets are
independent, see \cite[Section~15.3]{LastPenrose17}. In contrast to
Theorem~\ref{thr:indep-values}, which relates monotone ID valuations with
independent values to a completely random measure on $\R^d$, the
random measure $Z$ is defined on the family $\sC$ of closed
convex sets in $\R^d$. Then, the representation in Theorem~\ref{thr:2}  can be
formulated as follows.

\begin{corollary}
  \label{cor:measure-Z}
  A non-negative $\sigma$-continuous random valuation $\Phi$ with
  $\essinf\Phi(K)=0$ for all $K\in\Kd$ is ID with independent
  increments if and only if $\Phi$ has the same distribution as
  $Z(\sC_K)$ for a non-negative pure-jump completely random measure
  $Z$ on $\sC$ with intensity $\nu$ satisfying \eqref{eq:nu-levy}.
\end{corollary}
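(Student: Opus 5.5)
The plan is to prove the two directions separately, and in both to go through Theorem~\ref{thr:2} and Proposition~\ref{prop:Phi-Z}.

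\emph{Sufficiency.} Let $Z$ be a non-negative pure-jump completely random measure on $\sC$ whose intensity $\nu$ satisfies \eqref{eq:nu-levy}. I would realise $Z$ as $Z(\sM)=\sum_i r_i\1_{F_i\in\sM}$, where $\{(F_i,r_i)\}$ is a Poisson process on the standard Borel space $\sC\times(0,\infty)$; this is the standard representation of pure-jump completely random measures from \cite[Section~15.3]{LastPenrose17}. Condition \eqref{eq:nu-levy} gives that $Z(\sC_K)<\infty$ a.s.\ for each $K$. Taking $K$ from a countable cofinal family, $Z$ is then a.s.\ locally finite, so Proposition~\ref{prop:Phi-Z} shows that $\Phi(K)=Z(\sC_K)$ is a non-negative, monotone, $\sigma$-continuous random valuation.

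It remains to identify the remaining properties.
\begin{enumerate}
\item Infinite divisibility: splitting $\nu$ into $n$ equal parts splits the Poisson process into $n$ i.i.d.\ superposed processes, and $\Phi$ becomes the sum of the corresponding $n$ i.i.d.\ valuations.
\item Separability: this follows from Lemma~\ref{lemma:separable}.
\item Independent increments: for nested $L_1\supset\dots\supset L_n$, the increments are $Z(\sC_{L_i}\setminus\sC_{L_{i+1}})$ on disjoint families, hence independent. This is exactly the reverse direction of Theorem~\ref{thr:2}.
\item Vanishing essential infimum: $\Prob{Z(\sC_K)<\eps}\ge \Prob{\text{no atom with }F\cap K\ne\emptyset,\ r\ge\eps}>0$, by \eqref{eq:nu-levy}.
\end{enumerate}

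\emph{Necessity.} Assume $\Phi$ is ID with independent increments, non-negative, $\sigma$-continuous, and $\essinf\Phi(K)=0$. I first need monotonicity. By Lemma~\ref{lemma:separable}, $\Phi$ is separable. Theorem~\ref{thr:LH} gives that the deterministic part is $\phi(K)=\essinf\Phi(K)=0$. The proposition preceding Theorem~\ref{thr:2} then shows that $\Phi$ is monotone. Now Theorem~\ref{thr:2} applies with $\phi=0$, and $\Phi$ has the same distribution as $\sum_i r_i\1_{K\cap F_i\ne\emptyset}$ for a Poisson process with intensity $\nu$ satisfying \eqref{eq:nu-levy}. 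Setting $Z=\sum_i r_i\delta_{F_i}$ gives $\Phi\overset{d}{=}Z(\sC_K)$, and $Z$ is a pure-jump completely random measure because Poisson counts on disjoint sets are independent.

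\emph{Main obstacle.} The delicate point is checking the hypotheses of the earlier results precisely. First, the identification of $\phi$ with $\essinf\Phi(K)$ must be justified for each $K$. I would do this from the Lévy–Khinchin form: the Poisson integral has essential infimum zero by the same small-ball argument as in item~4 above. Second, equality in distribution has to be understood as equality of finite-dimensional distributions; this is harmless because all the earlier statements are formulated in those terms.
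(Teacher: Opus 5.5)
Your plan is correct and follows the paper's route. The paper states the corollary as a direct reformulation of Theorem~\ref{thr:2}. Necessity comes from the proposition preceding Theorem~\ref{thr:2} (non-negativity plus vanishing deterministic part forces monotonicity). Sufficiency comes from Proposition~\ref{prop:Phi-Z} together with the reverse direction of Theorem~\ref{thr:2}.

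One step, however, is false as written: the inequality in item~4,
\[
\Prob{Z(\sC_K)<\eps}\ge\Prob{\text{no atom with } F\cap K\neq\emptyset,\ r\ge\eps}.
\]
On the event on the right there may still be infinitely many atoms with $r<\eps$ hitting $K$, and their total can exceed $\eps$. For instance, take $\nu=\delta_{\{0\}}\otimes r^{-1}e^{-r}\,dr$ and $0\in K$. Then $Z(\sC_K)$ is standard exponential, so the left side is $1-e^{-\eps}\sim\eps$, while the right side is $\exp\{-\int_\eps^\infty r^{-1}e^{-r}\,dr\}\sim e^{\gamma}\eps$, where $\gamma$ is Euler's constant.

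The repair is to cut at a smaller level $\delta$.
\begin{enumerate}
\item With positive probability no atom with $r\ge\delta$ hits $K$, since $\nu(\sC_K\times[\delta,\infty))<\infty$.
\item The sum $S_\delta$ of the atoms with $r<\delta$ hitting $K$ is independent of these, and has mean $\int_{(0,\delta)}r\,\nu(\sC_K\times dr)$. This mean tends to $0$ as $\delta\downarrow0$ by \eqref{eq:nu-levy}.
\item Markov's inequality then gives $\Prob{S_\delta<\eps}>0$ for small $\delta$, hence $\Prob{Z(\sC_K)<\eps}>0$.
\end{enumerate}

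In the sufficiency direction this check is superfluous anyway, because $\essinf\Phi(K)=0$ is a hypothesis there. It is genuinely needed only in the necessity direction. There you must identify the drift $\phi(K)$ from Theorem~\ref{thr:LH} with $\essinf\Phi(K)$, a fact the paper simply asserts. For that, run the corrected argument on the one-dimensional L\'evy measure of $\Phi(K)$, since Theorem~\ref{thr:2} is not yet available at that stage.
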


Corollary~\ref{cor:measure-Z} settles a random variant of
Conjecture~\ref{conjecture:rep} for non-negative ID valuations with
independent increments. If $\Phi$ is integrable, it implies that
$\E\Phi(K)=\mu(\sC_K)$ for a measure $\mu$ on $\sC$. Given that, in
general, the measure $\mu$ representing the deterministic valuation
$\E\Phi(K)$ may be signed (see
Examples~3.3 and 3.4 in \cite{ilien:mol:vis25}), we conclude that not
every deterministic monotone $\sigma$-continuous valuation can be
obtained as the expectation of an ID valuation with independent
increments.

\begin{remark}[Special cases of $\nu$]
  If $\nu(\sC_K\times (0,\infty))<\infty$, then the sum in
  \eqref{eq:positive-sum} involves a finite number of terms for any
  given $K$. If $\nu(\sC\times(0,\infty))<\infty$, then the sum in
  \eqref{eq:positive-sum} involves only a finite number of terms
  simultaneously for all $K$.

  The ID valuation $\Phi$ with independent increments and vanishing
  deterministic part is a restriction to $\Kd$ of a random measure on
  $\R^d$ if and only if $\nu$ is supported by singletons, that is, by
  the family $\{(\{x\},r)\colon r\in(0,\infty), x\in\R^d\}$. In this
  case, the values of $\Phi$ on disjoint sets are independent.
\end{remark}

\begin{remark}[Uniqueness of $\nu$]
  In general, a measure $\nu$ on $\sC\times(0,\infty)$ is not uniquely
  determined by its values on $\sC_K\times A$ for all $K\in\Kd$ and
  Borel $A\subset(0,\infty)$, that is, by the valuation it
  generates. For example, on the line, let $\nu_1$ be the sum of Dirac
  measures at $([0,3],1)$ and $([1,2],1)$, and $\nu_2$ at $([0,2],1)$
  and $([1,3],1)$.  Then
  $\nu_1(\sC_K\times\cdot)=\nu_2(\sC_K\times\cdot)$ for all $K$.
  
  However, in case of ID valuations, the measure $\nu$ is determined
  uniquely by the distribution of the random valuation if one takes
  into account the finite-dimensional distributions of $\Phi$. First,
  for any $K\in\Kd$, the measure $\nu(\sC_K\times\cdot)$ is the L\'evy
  measure $M_K$ of the infinitely divisible random variable
  $\Phi(K)$. Second, by comparing the L\'evy measure $M_{K,L}$ of the
  random vector $(\Phi(K),\Phi(L))$ with the L\'evy measure of
  $(\sum_i r_i\1_{K\cap F_i\neq\emptyset},\sum_i r_i \1_{L\cap
    F_i\neq\emptyset})$, we see that the restriction of $M_{K,L}$ on
  the $x$-axis gives us $\nu((\sC_K\setminus\sC_L)\times\cdot)$, on
  the $y$-axis $\nu((\sC_L\setminus\sC_K)\times\cdot)$, and on the
  diagonal $\{(t,t): t>0\}$ (equivalently, to $(0,\infty)^2$)
  we obtain $\nu((\sC_K\cap\sC_L)\times\cdot)$. In this
  way, finite-dimensional distributions of $\Phi$ yield the values of
  the L\'evy measure $\nu$ on
  $(\sC_{K_1}\cap\cdots\cap\sC_{K_m})\times A$ for all
  $K_1,\dots,K_m\in\Kd$, $m\geq1$, and Borel
  $A\subset(0,\infty)$. Since this family of sets is closed under
  finite intersections, generates the Borel $\sigma$-algebra, and
  $\nu$ is finite on it outside the origin,
  the values of $\nu$ on this family determine
  uniquely $\nu$ on the whole Borel $\sigma$-algebra on the product
  space $\sC\times(0,\infty)$.
\end{remark}

\begin{example}
  \label{ex:M+x}
  Let $\nu$ be the pushforward of the product of the Lebesgue measure
  $V_d$ and a L\'evy measure $\theta$ on $(0,\infty)$ under the map
  $(x,r)\mapsto (x+M,r)$, where $M$ is a deterministic compact convex
  set. Furthermore, let $\phi(K)=0$ in \eqref{eq:positive-sum}. Then
  \begin{displaymath}
    \Phi(K)=\sum_i r_i\1_{K\cap (x_i+M)\neq\emptyset},
  \end{displaymath}
  where $\{(x_i,r_i),i\geq1\}$ is the Poisson point process on
  $\R^d\times(0,\infty)$ of intensity $V_d\otimes\theta$. Since 
  $K\cap M+x\neq\emptyset$ if and only if $x\in K+(-M)$,
  where $(-M)=\{-x:x\in M\}$, we have
  \begin{displaymath}
    \E e^{-u\Phi(K)}
    =\exp\Big\{-V_d(K+(-M))\int_0^\infty
    \big(1-e^{-ur}\big) \theta(dr)\Big\}.
  \end{displaymath}
  If $\theta=\delta_1$ is the Dirac measure at $1$, then $r_i=1$ for
  all $i$ and $\Phi(K)=\eta(K+(-M))$,
  where $\eta$ is a homogeneous Poisson random measure on
  $\R^d$.
  In analogy with the deterministic setting, where valuations
  $V_d(K+M)$, $M\in\Kd$, span a dense subset of the family of all
  continuous translation-invariant valuations (see
  \cite[Corollary~3.3]{MR3289841}), it is tempting to conjecture that
  linear combinations of random valuations $\eta(K+M)$, $M\in\Kd$, are
  dense (in the topology of pointwise a.s.\ convergence) in the family
  of all stationary $\sigma$-continuous monotone ID valuations with
  independent increments.
\end{example}

If the independence of increments is relaxed as described in
Appendix~A, then the L\'evy measure is supported by valuations
$\psi(K)$ given by suprema of upper semicontinuous quasi-concave
functions over $K$.
In the square-integrable case, the absence of correlation between
increments is equivalent to the fact that the variance of $\Phi$ is a
valuation itself, see Appendix~B.

\section{Homogeneity and stability properties}
\label{sec:homogeneity}

Below we consider only $\sigma$-continuous monotone valuations. 
The scaling of a valuation $\phi$ by a positive number $t$ can be
defined either by multiplying its values by $t$, which yields the
valuation $t\phi$, or by dilating the argument $K$ of $\phi$. The
latter scaling transforms the valuation $\phi$ into the valuation
$T_t\phi$ given by
\begin{displaymath}
  (T_t\phi)(K)=\phi(t^{-1}K), \quad K\in\Kd.
\end{displaymath}
A random valuation $\Phi$ is said to be \emph{$\beta$-homogeneous (in
  distribution)} with some $\beta\geq 0$ if $t^{-\beta}\Phi$ has the
same distribution as $T_t\Phi$ for all $t>0$.
Note that this concept of homogeneity is different from the
pathwise homogeneity, which appears in McMullen's representation from
Proposition~\ref{prop:Macmullen}.

A random valuation $\Phi$ is said to be \emph{$\alpha$-stable (in
  argument)} with some $\alpha\neq0$ if, for all $n\ge2$, the
valuation $T_{n^{-1/\alpha}}\Phi$ has the same distribution as the sum
of $n$ independent copies of $\Phi$. This immediately implies that
$\Phi$ is ID.  This $\alpha$-stability property is different from the
fact that the values $\Phi(K)$ are $\alpha$-stable random variables
for all $K\in\Kd$; the valuation $\Phi$ is said to be
\emph{$\alpha$-stable (in value)} if the sum of its $n$ independent
copies has the same distribution as $n^{1/\alpha}\Phi$.

\begin{proposition}
  \label{prop:stable}
  Let $\Phi$ be a non-trivial $\sigma$-continuous non-negative ID valuation with
  independent increments and vanishing deterministic part. Assume that
  the distribution of $\Phi$ is determined by the measure $\nu$ on
  $\sC\times(0,\infty)$, see Theorem~\ref{thr:2}.
  \begin{enumerate}
  \item[(i)] $\Phi$ is $\alpha$-stable in argument if and only if
    \begin{equation}
      \label{eq:nu-homogeneous}
      \nu(t^{1/\alpha}\sM\times A)=t \nu(\sM\times A)
    \end{equation}
    for all rational $t>0$, measurable $\sM\subset\sC$ and Borel
    $A\subset(0,\infty)$, and then $\alpha\in(0,\infty)$. Here the
    scaling of $\sM$ is applied to all its elements. 
  \item[(ii)] $\Phi$ is $\alpha$-stable in value if and only if 
    \begin{equation}
      \label{eq:nu-scaling-value}
      \nu(\sM\times tA)=t^{-\alpha}\nu(\sM\times A)
    \end{equation}
    for all $t>0$, measurable $\sM\subset\sC$ and Borel
    $A\subset(0,\infty)$,
    and then $\alpha\in(0,1)$.
  \item[(iii)] $\Phi$ is $\beta$-homogeneous if and only if 
    $\nu(t\sM\times A)=\nu(\sM\times (t^{-\beta} A))$ for all $t>0$,
    measurable $\sM\subset\sC$ and Borel $A\subset(0,\infty)$.
  \end{enumerate}
  In each assertion, it suffices to verify the corresponding identity for 
  $\sM=\sC_L$, where $L$ ranges over finite unions of compact convex sets
  and $A\subset[\eps,\infty)$ for any $\eps>0$. 
\end{proposition}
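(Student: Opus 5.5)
The plan is to reduce each distributional identity to an identity of L\'evy measures, and then to an identity for $\nu$, using the uniqueness of $\nu$ established in the remark on uniqueness after Corollary~\ref{cor:measure-Z}. By Theorem~\ref{thr:2}, $\Phi$ has the same distribution as $\sum_i r_i\1_{K\cap F_i\neq\emptyset}$ for a Poisson process $\{(F_i,r_i)\}$ with intensity $\nu$.

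\emph{Effect of the three operations on $\nu$.}
\begin{itemize}
\item Dilation of the argument. We have $(T_t\Phi)(K)=\sum_i r_i\1_{t^{-1}K\cap F_i\neq\emptyset}=\sum_i r_i\1_{K\cap tF_i\neq\emptyset}$. So $T_t\Phi$ has the same form with intensity $\nu_t(\sM\times A)=\nu(t^{-1}\sM\times A)$, the pushforward of $\nu$ under $(F,r)\mapsto(tF,r)$.
\item Sum of $n$ independent copies. This has intensity $n\nu$, by superposition of independent Poisson processes.
\item Multiplication of the values by $s$. This has intensity $\nu(\sM\times s^{-1}A)$.
\end{itemize}
Each transformed valuation is again an ID valuation with independent increments and vanishing deterministic part. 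The uniqueness remark shows that its distribution determines the intensity on the $\pi$-system $\{(\sC_{K_1}\cap\dots\cap\sC_{K_m})\times A\}$, hence everywhere.

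\emph{The three identities.}
\begin{itemize}
\item[(i)] The condition $T_{n^{-1/\alpha}}\Phi\overset{d}{=}\sum_{j=1}^n\Phi_j$ becomes $\nu(n^{1/\alpha}\sM\times A)=n\nu(\sM\times A)$. Applying this with $m$ and $n$ and composing the scalings gives the identity for all rational $t=n/m$; conversely, the identity at integer $t$ is exactly stability.

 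To see that $\alpha>0$, take $\sM=\sC_K$ with $K$ a ball around a point and $A$ such that $\nu(\sC_K\times A)>0$; such $K,A$ exist because $\Phi$ is non-trivial. Note $t\sC_K=\sC_{tK}$. If $\alpha<0$, then for $t\to\infty$ through integers, $t^{1/\alpha}\to0$, and shrinking balls $\sC_{t^{1/\alpha}K}$ have $\nu$-mass $t\,\nu(\sC_K\times A)\to\infty$. This contradicts local finiteness together with $\sigma$-continuity: by \eqref{eq:nu-levy} and monotone convergence, $\nu(\sC_{K_n}\times A)$ is decreasing and bounded by $\nu(\sC_{K_1}\times A)<\infty$ for $A\subset[\eps,\infty)$. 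I would center $K$ at the origin, or adjust for the dilation center, taking $K\ni 0$.
\item[(ii)] The condition becomes $n\nu(\sM\times A)=\nu(\sM\times n^{-1/\alpha}A)$ for integers $n$, hence for rational scalings. Extending to all $t>0$ needs the monotonicity of $t\mapsto\nu(\sM\times[t,\infty))$, which passes from rational to real $t$, and then a $\pi$-$\lambda$ argument in $A$. The resulting scaling $\nu(\sM\times dr)\propto r^{-1-\alpha}dr$ must satisfy \eqref{eq:nu-levy}, namely $\int\min(1,r)r^{-1-\alpha}dr<\infty$, which forces $\alpha\in(0,1)$.
\item[(iii)] Here $t^{-\beta}\Phi\overset{d}{=}T_t\Phi$ gives $\nu(\sM\times t^{\beta}A)=\nu(t^{-1}\sM\times A)$, which after reparametrisation is the stated identity.
\end{itemize}

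\emph{Reduction to $\sC_L$.} For the final sentence, the sets $\sC_L\times A$ with $L$ a finite union of convex bodies and $A\subset[\eps,\infty)$ have finite measure by \eqref{eq:nu-levy}. Since $\sC_{L_1\cup L_2}=\sC_{L_1}\cup\sC_{L_2}$, inclusion–exclusion recovers the measures of the intersections $\sC_{K_1}\cap\dots\cap\sC_{K_m}$. The scalings map this class into itself. So equality of two $\sigma$-finite measures on this generating $\pi$-system implies equality everywhere.

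The main obstacle I expect is the careful $\sigma$-finiteness and $\pi$-system bookkeeping. The relevant measures are infinite near $r=0$, which is why one restricts to $A\subset[\eps,\infty)$. The same restriction is needed for the sign argument for $\alpha$ in (i), which relies on the continuity from above of $\nu(\sC_K\times A)$.
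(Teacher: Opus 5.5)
Your proposal is correct and follows the paper's route. You read off how $T_t$, summation of independent copies, and value scaling act on the intensity $\nu$ of the Poisson representation from Theorem~\ref{thr:2}, and then use uniqueness of $\nu$ on the $\pi$-system $(\sC_{K_1}\cap\cdots\cap\sC_{K_m})\times A$. The deviations are minor: you exclude $\alpha<0$ in (i) directly at the level of $\nu$, using $\sC_{sK}\subset\sC_K$ for $s\le 1$ and convex $K\ni 0$, where the paper argues that $\Phi(n^{1/\alpha}K)$ stays tight while the $n$-fold sum diverges; in (ii) you derive $\alpha\in(0,1)$ from the $r^{-1-\alpha}$ tail together with \eqref{eq:nu-levy} instead of citing facts on positive strictly stable laws, and you also spell out the rational-to-real extension that the paper leaves implicit.
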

\begin{proof}
  All mentioned properties of $\Phi$ can be equivalently formulated
  for $\nu$.  If
  \begin{displaymath}
    \Phi(K)=\sum_i r_i\1_{K\cap F_i\neq\emptyset}, \quad K\in\Kd,
  \end{displaymath}
  then 
  \begin{displaymath}
    T_t\Phi(K)=\sum_i r_i\1_{K\cap tF_i\neq\emptyset}
    \quad\text{and}\quad
    t\Phi(K)=\sum_i tr_i\1_{K\cap F_i\neq\emptyset}.
  \end{displaymath}
  The intensity measure of the Poisson point process
  $\{(tF_i,r_i),i\geq1\}$ is given by
  $\nu((t^{-1}\sM)\times A)$ and of the Poisson point process
  $\{(F_i,tr_i),i\geq1\}$  by
  $\nu(\sM\times t^{-1}A)$. Furthermore, the L\'evy
  measure of the sum of $n$ i.i.d.\ copies of $\Phi$ is $n\nu$.

  \noindent
  (i) \textsl{Sufficiency} is obvious, since \eqref{eq:nu-homogeneous}
  says that the L\'evy measure of $T_{n^{-1/\alpha}}\Phi$ is exactly
  the L\'evy measure of $\Phi$ multiplied by $n$.  Hence,
  $\Phi(n^{1/\alpha}K)$ has the same distribution as the sum of $n$
  independent copies of $\Phi(K)$, and the same holds for
  finite-dimensional distributions. Applying this for $K$ which
  contains the origin and using the monotonicity of $\Phi$ show that
  $\alpha>0$.
  
  \noindent
  \textsl{Necessity.}  The stability property implies that the L\'evy
  measure $M_K$ of the random variable $\Phi(K)$ is homogeneous under
  scaling of $K$, namely, $M_{n^{1/\alpha}K}=nM_K$. Thus,
  $\nu(\sC_{n^{1/\alpha}K}\times A)=n\nu(\sC_K\times A)$ for all
  $K\in\Kd$ and Borel $A\subset(0,\infty)$.
  Note that
  \begin{displaymath}
    \sC_{n^{1/\alpha}K}
    =\{F\in\sC: F\cap n^{1/\alpha}K\neq\emptyset\}
    =\{n^{1/\alpha}F: F\cap K\neq\emptyset\}
    =n^{1/\alpha}\sC_K.
  \end{displaymath}
  Furthermore, $\nu((\sC_{K_1}\cap\cdots\cap\sC_{K_m})\times A)$ is
  recovered from the restriction of the L\'evy measure
  $M_{K_1,\dots,K_m}$ of $(\Phi(K_1),\dots,\Phi(K_m))$ onto
  $(0,\infty)^m$ and so inherits the homogeneity property of
  $M_{K_1,\dots,K_m}$ and satisfies
  \begin{displaymath}
    \nu((\sC_{n^{1/\alpha}K_1}\cap\cdots\cap\sC_{n^{1/\alpha}K_m})\times
    A)=n\nu((\sC_{K_1}\cap\cdots\cap\sC_{K_m})\times A).
  \end{displaymath}
  Since finite intersections of $\sC_{K_i}$ generate the
  $\sigma$-algebra on $\sC$, the measure $\nu$ is uniquely determined
  and satisfies $\nu(n^{1/\alpha}\sM\times A)=n \nu(\sM\times A)$
  for all $n\geq2$ on the whole $\sigma$-algebra on $\sC$. By a
  standard argument, \eqref{eq:nu-homogeneous} holds
  for all rational $t$.

  Let $\{x_i,i\geq1\}$ be a Poisson point process on $\R^d$ with
  intensity measure having density $\|x\|^{\alpha-d}$ for
  any $\alpha\in(0,\infty)$. The corresponding intensity measure
  satisfies $\mu(tL)=t^{\alpha}\mu(L)$ for each compact set
  $L$. Therefore, the valuation given by \eqref{eq:points}
  is a random measure which is
  $\alpha$-stable in argument, so that all values
  $\alpha\in(0,\infty)$ are possible.

  Next, $\alpha<0$ is not possible if $\Phi$ is $\sigma$-continuous
  and monotone, since then
  $(T_{n^{-1/\alpha}}\Phi)(K)=\Phi(n^{1/\alpha}K)\to \Phi(\{0\})$
  a.s.\ as $n\to\infty$ if $0\in K$. On the other hand, stability
  gives that $\Phi(n^{1/\alpha}K)$ coincides in distribution with 
  $\Phi_1(K)+\cdots+\Phi_n(K)$ for independent copies of
  $\Phi$. If $\Prob{\Phi(K)>0}>0$, then the sum on the right-hand side
  tends to $\infty$ in probability, which is a contradiction. If
  $0\notin K$, replace $K$ with a sufficiently large ball and use
  monotonicity. 

  \noindent
  (ii) \textsl{Sufficiency} is obvious, since \eqref{eq:nu-scaling-value}
  means that the L\'evy measure of $n^{1/\alpha}\Phi$ is exactly
  the L\'evy measure of $\Phi$ multiplied by $n$.

  \noindent
  \textsl{Necessity.}  The stability property implies that the L\'evy
  measure $M_K$ of the random variable $\Phi(K)$ is
  homogeneous. Furthermore,
  $\nu((\sC_{K_1}\cap\cdots\cap\sC_{K_m})\times A)$ is recovered from
  the restriction of $M_{K_1,\dots,K_m}$ onto 
  $(0,\infty)^m$ and so inherits the homogeneity property of
  $M_{K_1,\dots,K_m}$. Since finite intersections of $\sC_{K_i}$
  generate the $\sigma$-algebra on $\sC$, the measure $\nu$ is
  uniquely determined and is homogeneous on the whole
  $\sigma$-algebra.

  Since $\Phi$ is non-trivial, there exists $K$ for which $\Phi(K)$
  is a non-degenerate non-negative strictly $\alpha$-stable random
  variable, and hence necessarily $\alpha\in(0,1)$. Existence for every
  $\alpha\in(0,1)$ follows by taking $\Phi$ to be the restriction to
  $\Kd$ of a non-negative independently scattered $\alpha$-stable
  random measure with a locally finite control measure. 

  \noindent
  (iii) The valuation $\Phi$ is $\beta$-homogeneous if and only if
  the distribution of the Poisson point process $\{(tF_i,r_i),i\geq1\}$
  coincides with the distribution of
  $\{(F_i,t^{-\beta}r_i),i\geq1\}$ for all $t>0$. This is exactly
  the formulated homogeneity property of $\nu$.
\end{proof}

\section{Stationary random valuations}
\label{sec:stat-rand-valu}

A random valuation $\Phi$ is said to be \emph{stationary} if all its
finite-dimensional distributions are translation invariant, that is,
the joint distribution of $(\Phi(K_1),\dots,\Phi(K_m))$ is the same as
the joint distribution of $(\Phi(K_1+x),\dots,\Phi(K_m+x))$ for all
$x\in\R^d$, all $K_1,\dots,K_m\in\Kd$, and all $m\ge1$. Similarly,
$\Phi$ is \emph{isotropic} if all its finite-dimensional distributions
are invariant under rotations applied to the argument. We assume that
random valuations do not have a deterministic part.

\begin{example}[Stationary pure-jump completely random measure]
  \label{ex:poisson-r-m}
  Let $\{(x_i,r_i),i\geq1\}$ be the Poisson point process on
  $\R^d\times(0,\infty)$ with intensity measure equal to the product
  of the Lebesgue measure on $\R^d$ and a measure $\mu$ on
  $(0,\infty)$ such that $\int \min(1,r)\mu(dr)<\infty$. Then $\eta$
  given by \eqref{eq:points}
  is a stationary pure-jump completely random measure. If
  restricted to compact convex sets, it defines a stationary
  $\sigma$-continuous ID valuation with independent increments and
  independent values, see Theorem~\ref{thr:indep-values}.
\end{example}

If $\Phi$ is an ID valuation, then the uniqueness of the L\'evy
measure implies that $\Phi$ is stationary (isotropic) if and only if
its L\'evy measure $\Lambda$ is invariant under translations
(rotations).  The stationarity property of a $\sigma$-continuous
monotone ID valuation with inde\-pen\-dent increments is equivalent to
translation invariance of the measure $\nu$ on $\sC\times(0,\infty)$
(see Theorem~\ref{thr:2}) with respect to translations of the
first component, that is,
\begin{displaymath}
  \nu((\sC_{K_1}\cap\cdots\cap\sC_{K_m})\times A)
  =\nu((\sC_{K_1+x}\cap\cdots\cap\sC_{K_m+x})\times A)
\end{displaymath}
for all $x\in\R^d$, $K_1,\dots,K_m\in\Kd$, $m\geq1$, and Borel
$A\subset(0,\infty)$. 

The family of translation-invariant locally
finite measures on $\sC$ has been characterised in Theorem~5.4.1 from
\cite{ma75}. Below we present its variant for measures on
$\sC\times(0,\infty)$. Denote by $G(d,k)$ the Grassmannian in $\R^d$
that consists of linear subspaces of dimension $k$. For $E\in G(d,k)$,
denote by $\Pi_{E^\perp} L$ the projection of a compact set $L$ onto
the orthogonal complement $E^\perp$ of $E$.

The following result characterises translation invariant L\'evy
measures and so describes monotone stationary $\sigma$-continuous ID
random valuations which have independent increments and vanishing
deterministic part. 

\begin{theorem}
  \label{thr:ma}
  Let $\nu$ be a measure on $\sC\times(0,\infty)$ which satisfies
  \eqref{eq:nu-levy} and such that
  $\nu(\{\emptyset\}\times(0,\infty))=0$. Then $\nu$ is invariant
  under translations on $\sC$ if and only if, for each 
  compact set $L\subset\R^d$ and Borel $A\subset(0,\infty)$, we have
  \begin{equation}
    \label{eq:sum}
    \nu(\sC_L\times A)
    =\sum_{k=0}^{d} \int_A \int_{G(d,k)}
      \E(V_{d-k}(Y_{E,r}+\Pi_{E^\perp}L))G_{k,r}(dE) \mu(dr),
  \end{equation}
  where, for each $k=0,\dots,d$, $r\mapsto
  G_{k,r}$ is a measurable map from
  $(0,\infty)$ to the space of finite measures on
  $G(d,k)$ with the weak topology, and $\mu$ is a $\sigma$-finite measure on
  $(0,\infty)$. Furthermore, for each $E\in G(d,k)$ and $r>0$,
  $Y_{E,r}$ is a non-empty random compact convex set in $E^\perp$ such
  that
  \begin{equation}
    \label{eq:sum-levy}
    \int_{(0,\infty)}\min(1,r)
    \int_{G(d,k)}\E(V_{d-k}(Y_{E,r}+\Pi_{E^\perp}B)) G_{k,r}(dE)\mu(dr)<\infty
  \end{equation}
  for all $k=0,\dots,d$, where $B$ is the unit Euclidean ball in
  $\R^d$, and, for every compact set $L$ in $\R^d$, the map
  \begin{equation}
    \label{eq:map-Er}
    (E,r)\mapsto \E(V_{d-k}(Y_{E,r}+\Pi_{E^\perp}L))
  \end{equation}
  is measurable. 
\end{theorem}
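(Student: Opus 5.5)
The plan is to reduce the statement to Matheron's characterisation of translation-invariant locally finite measures on $\sC$ (Theorem~5.4.1 in \cite{ma75}), applied for each fixed jump size $r$, and then to compute $\nu(\sC_L\times A)$ explicitly on cylinders. For sufficiency, assume $\nu$ is translation invariant. Fix the unit ball $B$ and set
\begin{displaymath}
  \mu(dr)=\min(1,r)^{-1}\,\nu\bigl((\sC\times dr)\cap(\sC_B\times(0,\infty))\bigr)+\mu_0(dr),
\end{displaymath}
with an auxiliary finite $\mu_0$ if needed. Then $\mu$ is $\sigma$-finite by \eqref{eq:nu-levy}. Since $\sC\times(0,\infty)$ is standard Borel, I would disintegrate
\begin{displaymath}
  \nu(d(F,r))=\nu_r(dF)\,\mu(dr).
\end{displaymath}
By translation invariance of $\nu$ and countable generation of the $\sigma$-algebra on $\sC$ (finite intersections of $\sC_{K}$, $K\in\sD$), $\nu_r$ is translation invariant for $\mu$-a.e.\ $r$. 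Local finiteness of $\nu_r$ for a.e.\ $r$ follows from \eqref{eq:nu-levy} together with translation invariance, since each compact set is covered by finitely many translates of $B$.

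Next I apply Matheron's theorem to each $\nu_r$. It says that a translation-invariant locally finite measure on $\sC$ is concentrated on cylinders $F=E\oplus Y$, where $E\in G(d,k)$ is the lineality space of $F$ and $Y=F\cap E^\perp$ is compact. The mechanism is this: if $F$ has a recession direction that is not a line, then the translates of the family of such sets hitting $B$ along that direction are disjoint yet carry equal mass. This contradicts local finiteness. Stratify by $k=\dim E$ and write $Y=x+Y_0$ with $x=s(Y)$, the Steiner point, so that $s(Y_0)=0$. The map $F\mapsto(E,x,Y_0)$ is measurable. Translation invariance in directions of $E$ is trivial, and translation invariance in $E^\perp$ forces, via uniqueness of Haar measure after disintegrating in $(E,Y_0)$, the form
\begin{displaymath}
  \nu_r=\sum_k\int \lambda_{E^\perp}(dx)\,\rho_{k,r}(dE,dY_0),
\end{displaymath}
where $\lambda_{E^\perp}$ is Lebesgue measure on $E^\perp$. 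A cylinder $E+x+Y_0$ hits a compact $L$ if and only if
\begin{displaymath}
  x\in \Pi_{E^\perp}L+(-Y_0),
\end{displaymath}
so
\begin{displaymath}
  \nu_r(\sC_L)=\sum_k\int V_{d-k}\bigl(-Y_0+\Pi_{E^\perp}L\bigr)\,\rho_{k,r}(dE,dY_0).
\end{displaymath}
Because $V_{d-k}(-Y_0+\Pi_{E^\perp}B)\ge\kappa_{d-k}>0$, the total mass of $\rho_{k,r}$ is at most $\nu_r(\sC_B)/\kappa_{d-k}<\infty$. So the $E$-marginal $G_{k,r}$ is a finite measure. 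Disintegrating $\rho_{k,r}$ with respect to $E$ and letting $Y_{E,r}$ be a random set distributed as $-Y_0$ under the conditional law gives \eqref{eq:sum}. Integrating against $\min(1,r)\mu(dr)$ with $L=B$ gives \eqref{eq:sum-levy}. Measurability of $r\mapsto G_{k,r}$ and of \eqref{eq:map-Er} comes from measurable versions of the disintegrations, which exist because all spaces involved are standard Borel.

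For necessity, the data $(\mu,G_{k,r},Y_{E,r})$ define the measure
\begin{displaymath}
  \tilde\nu=\sum_k\int\!\!\int\!\!\int \delta_{(E+x+Y_{E,r},\,r)}\,\lambda_{E^\perp}(dx)\,\P(dY_{E,r})\,G_{k,r}(dE)\,\mu(dr).
\end{displaymath}
This $\tilde\nu$ is manifestly translation invariant, and by the computation above it has the values \eqref{eq:sum} on $\sC_L\times A$. It remains to show $\nu=\tilde\nu$. Apply \eqref{eq:sum} with $L$ ranging over finite unions of compact convex sets. Inclusion--exclusion then recovers the values on all finite intersections $\sC_{K_1}\cap\cdots\cap\sC_{K_m}$, which form a $\pi$-system generating the $\sigma$-algebra. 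Both measures are finite on $\sC_B\times[\eps,\infty)$ and its translates, so the uniqueness argument from the remark on uniqueness of $\nu$ gives $\nu=\tilde\nu$, and hence $\nu$ is translation invariant.

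The main obstacle is the step establishing that each $\nu_r$ is concentrated on cylinders with compact cross-section, together with the measurable selection of the disintegration in $r$. If citing Matheron's theorem fiberwise turns out to be delicate, I would instead reprove it directly for $\nu$ on $\sC\times[\eps,\infty)$ using the disjoint-translates argument, and perform the disintegration afterwards.
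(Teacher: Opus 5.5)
Your proposal is correct and is essentially the paper's proof: Matheron's reduction to cylinders, the Steiner-point parametrisation, uniqueness of Haar measure on the fibres $E^\perp$, the hitting identity, the bound $V_{d-k}(Y+\Pi_{E^\perp}B)\ge\kappa_{d-k}$, disintegration, and uniqueness on hitting classes for the converse. The main difference is that the paper disintegrates in $r$ last: it applies Matheron's theorem to the invariant, locally finite measures $\nu(\cdot\times(1/n,\infty))$ and factors out Lebesgue measure on $E^\perp$ at the level of $\nu$, so that the $r$-marginals $\theta_k$ are $\sigma$-finite. This is exactly your fallback, and it spares you two things your ordering needs: handling the possibly non-$\sigma$-finite marginal $\nu(\sC\times\cdot)$, and upgrading the invariance of $\nu_r$ from countably many translations to all of $\R^d$ for $\mu$-a.e.\ $r$.

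In the converse the paper simply reads off $\nu(\sC_{L+x}\times A)=\nu(\sC_L\times A)$ from \eqref{eq:sum} instead of building $\tilde\nu$, which also avoids needing a jointly measurable law kernel for $Y_{E,r}$. There is a sign slip in your $\tilde\nu$: it must use the cylinders $E+x-Y_{E,r}$. By your own hitting computation, $E+x+Y_{E,r}$ produces $\E V_{d-k}(\Pi_{E^\perp}L-Y_{E,r})$ rather than the integrand of \eqref{eq:sum}.
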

\begin{proof}
  For $k=0,\dots,d$, let $\sZ_k$ denote the
  family of closed convex cylinders with $k$-dimensional direction
  space, namely, each $F\in\sZ_k$ can be written in the form $F=E+C$,
  where $E\in G(d,k)$ and $C$ is a non-empty compact convex subset of
  $E^\perp$. As in the proof of \cite[Theorem~5.4.1]{ma75}, the
  translation-invariant measure $\nu$ is concentrated on
  $\bigcup_{k=0}^d\sZ_k\times(0,\infty)$. This follows from the
  mentioned theorem for $\nu(\cdot\times(1/n,\infty))$ and then taking
  the union over $n$. 

  Denote by $s(C)$ the Steiner point of $C$, see
  \cite[Eq.~(1.31)]{S14}, and put $Y=C-s(C)$ and $x=s(C)$.  Then every
  $F\in\sZ_k$ has a unique representation
  \begin{equation}
    \label{eq:centered-cylinder}
    F=E+Y+x,
  \end{equation}
  where $E\in G(d,k)$, $Y\subset E^\perp$, $s(Y)=0$, and
  $x\in E^\perp$.  Let
  \begin{displaymath}
    \sY_k=\{(E,Y):E\in G(d,k),\;
    Y\subset E^\perp\text{ is non-empty compact convex},\;
    s(Y)=0\}.
  \end{displaymath}
  Equipped with the Borel $\sigma$-algebra inherited from
  $G(d,k)\times\Kd$, the space $\sY_k$ is a standard Borel space.
  Indeed, the map $C\mapsto s(C)$ is continuous in the Hausdorff
  metric, and the conditions $Y\subset E^\perp$ and $s(Y)=0$ are
  Borel. The correspondence in \eqref{eq:centered-cylinder} and its
  inverse are Borel measurable.

  Restrict $\nu$ to $\sZ_k\times(0,\infty)$ and transport this
  restriction under the map $(F,r)\mapsto(E,Y,x,r)$ determined by
  \eqref{eq:centered-cylinder}. Denote the resulting measure by
  $\widetilde\nu_k$. It is a measure on
  \begin{displaymath}
    \sX_k=
    \{(E,Y,x,r):(E,Y)\in\sY_k,\ x\in E^\perp,\ r>0\}.
  \end{displaymath}
  The transported measure $\widetilde\nu_k$ is invariant under
  $(E,Y,x,r)\mapsto (E,Y,x+\Pi_{E^\perp}z,r)$.
  Let $B_{E^\perp}$ denote the unit Euclidean ball in
  $E^\perp$ and, for Borel $H\subset\sY_k\times(0,\infty)$, define
  \begin{displaymath}
    \Theta_k(H)
    =
    \frac{1}{\kappa_{d-k}}
    \widetilde\nu_k
    \bigl(
    \{(E,Y,x,r):(E,Y,r)\in H,\ x\in B_{E^\perp}\}
    \bigr),
  \end{displaymath}
  where $\kappa_{d-k}$ is the volume of the unit Euclidean ball in $\R^{d-k}$.
  For $k=d$, we let $\kappa_0=1$. By translation invariance and the
  uniqueness of Haar measure on each fibre $E^\perp$, a monotone-class
  argument yields
  \begin{equation}
    \label{eq:fibre-decomposition}
    \int_{\sX_k} h(E,Y,x,r)\,\widetilde\nu_k(d(E,Y,x,r))
    =
    \int_{\sY_k\times(0,\infty)}
    \int_{E^\perp} h(E,Y,x,r)\; dx\;
    \Theta_k(d(E,Y,r))
  \end{equation}
  for every non-negative Borel function $h$.

  Let $L\subset\R^d$ be compact. For a cylinder $F=E+Y+x$, we have
  $F\cap L\neq\emptyset$ if and only if
  $(Y+x)\cap\Pi_{E^\perp}L\neq\emptyset$.  Hence
  \begin{align*}
    \int_{E^\perp}
    \1_{{(E+Y+x)\cap L\neq\emptyset}}\,dx
    &=
      V_{d-k}\bigl(\Pi_{E^\perp}L+(-Y)\bigr).
  \end{align*}
  Applying the measurable involution $Y\mapsto-Y$ to $\Theta_k$, and
  denoting the resulting measure again by $\Theta_k$, we obtain from
  \eqref{eq:fibre-decomposition}
  \begin{equation}
    \label{eq:Theta-representation}
    \nu(\sC_L\times A)
    =
    \sum_{k=0}^d
    \int_{\sY_k\times A}
    V_{d-k}(Y+\Pi_{E^\perp}L)\,
    \Theta_k(d(E,Y,r)).
  \end{equation}
  It remains to disintegrate the measures $\Theta_k$. Let
  $\theta_k(A)=\Theta_k(\sY_k\times A)$ for Borel $A\subset(0,\infty)$.
  Each $\theta_k$ is $\sigma$-finite. Indeed, since $Y$ is non-empty,
  \begin{displaymath}
    V_{d-k}(Y+\Pi_{E^\perp}B)\geq\kappa_{d-k},
  \end{displaymath}
  and therefore, for every $n\geq1$,
  \begin{displaymath}
    \kappa_{d-k}\theta_k([n^{-1},\infty))
    \leq
    \nu(\sC_B\times[n^{-1},\infty))
    <\infty
  \end{displaymath}
  by \eqref{eq:nu-levy}. Set $\mu=\theta_0+\cdots+\theta_d$, which is
  a $\sigma$-finite measure on $(0,\infty)$. Since all spaces
  involved are standard Borel, the disintegration theorem yields
  finite measurable kernels $H_{k,r}$ on $\sY_k$ such that
  \begin{displaymath}
    \Theta_k(d(E,Y,r))
    =H_{k,r}(d(E,Y))\,\mu(dr).
  \end{displaymath}
  Let $G_{k,r}$ be the $E$-marginal of $H_{k,r}$, that is, $G_{k,r}(D)
  =H_{k,r}\bigl(\{(E,Y)\in\sY_k:E\in D\}\bigr)$ for Borel $D\subset G(d,k)$.
  Then $r\mapsto G_{k,r}$ is a measurable kernel and
  $G_{k,r}$ is a finite measure for $\mu$-almost every $r$.
  Define the joint measure
  \begin{displaymath}
    \bar{H}_k(dr,dE,dY)=H_{k,r}(d(E,Y))\mu(dr)
  \end{displaymath}
  and its $(r,E)$-marginal
  \begin{displaymath}
    \bar{G}_k(dr,dE)=G_{k,r}(dE)\mu(dr).
  \end{displaymath}
  Redefining $G_{k,r}=0$ on a $\mu$-null set if necessary, we may
  assume that $G_{k,r}$ is finite for every $r>0$. 
  Since these are measures on standard Borel spaces, disintegration
  gives a jointly measurable probability kernel
  $Q_k(r,E,dY)$ such that
  \begin{displaymath}
    \bar{H}_k(dr,dE,dY)=Q_k(r,E,dY)G_{k,r}(dE)\mu(dr).
  \end{displaymath}
  On the set where $G_{k,r}$ vanishes, the kernel $Q_k$ may be
  chosen arbitrarily, so that the assertion holds for every $r$.
  
  Let $Y_{E,r}$ denote a random compact convex set with distribution
  $Q_k(r,E,\cdot)$. Then \eqref{eq:Theta-representation} becomes
  \begin{displaymath}
    \nu(\sC_L\times A)
    =
    \sum_{k=0}^{d}
    \int_A\int_{G(d,k)}
    \E V_{d-k}(Y_{E,r}+\Pi_{E^\perp}L)
    G_{k,r}(dE)\mu(dr),
  \end{displaymath}
  which is \eqref{eq:sum}. Notice that the construction also proves
  the measurability of \eqref{eq:map-Er},
  since
  $(E,Y)\mapsto V_{d-k}(Y+\Pi_{E^\perp}L)$ is Borel and
  $Q_k$ is a measurable probability kernel.
  Finally, taking $L=B$ in \eqref{eq:sum} and using
  \eqref{eq:nu-levy} gives
  \begin{align*}
    \infty
    &>
      \int_{(0,\infty)}
      \min(1,r)\,\nu(\sC_B\times dr)\\
    &=
      \sum_{k=0}^d
      \int_{(0,\infty)}\min(1,r)
      \int_{G(d,k)}
      \E V_{d-k}(Y_{E,r}+\Pi_{E^\perp}B)
      G_{k,r}(dE)\mu(dr).
  \end{align*}
  Since all summands are non-negative, each of them is finite, which
  yields \eqref{eq:sum-levy}.

  Conversely, a measure satisfying \eqref{eq:sum} is translation
  invariant.
  Indeed, for $x\in\R^d$,
  \begin{displaymath}
    \Pi_{E^\perp}(L+x)
    =
    \Pi_{E^\perp}L+\Pi_{E^\perp}x,
  \end{displaymath}
  and translation invariance of Lebesgue measure gives
  \begin{displaymath}
    V_{d-k}
    (Y_{E,r}+\Pi_{E^\perp}(L+x))
    =
    V_{d-k}(Y_{E,r}+\Pi_{E^\perp}L).
  \end{displaymath}
  Thus
  $\nu(\sC_{L+x}\times A)=\nu(\sC_L\times A)$ for all compact
  $L$, all $x\in\R^d$, and all Borel $A\subset [n^{-1},\infty)$ with any
  $n\geq 1$. Letting $n$ grow and using the fact that 
  compact hitting sets form a measure-determining class on $\sC$, this
  implies translation invariance of $\nu$.
\end{proof}

It is important to stress that \eqref{eq:sum} provides a
representation of the values of $\nu$ on $\sC_L\times A$ for all (not
necessarily convex) compact sets $L$ and Borel $A\subset[n^{-1},\infty)$
with any sufficiently large $n$. In particular, \eqref{eq:sum}
yields the values of $\nu$ on
$(\sC_{K_1}\cup\cdots\cup\sC_{K_m})\times A=\sC_{K_1\cup\cdots\cup
  K_m}\times A$ for all collections $K_1,\dots,K_m\in\Kd$ and then by
the inclusion-exclusion formula for all
$(\sC_{K_1}\cap\cdots\cap\sC_{K_m})\times A$, hence for all measurable
$\sM\subset\sC$.

It is instructive to separate out in \eqref{eq:sum} the terms
corresponding to the trivial Grassmannians (for $k=0$ and $k=d$), and
then
\begin{align*}
  \nu(\sC_L\times A)
  &=\int_A \E V_d(Y_{0,r}+L)g_0(r)\mu(dr)
    +\1_{L\neq\emptyset}\int_A g_d(r)\mu(dr)\\
  &\quad +\sum_{k=1}^{d-1} \int_A \int_{G(d,k)}
  \E(V_{d-k}(Y_{E,r}+\Pi_{E^\perp}L))G_{k,r}(dE) \mu(dr),
\end{align*}
where $g_0$ and $g_d$ are two non-negative measurable functions.  This
representation shows that the Poisson point process $\{(F_i,r_i), i\geq1\}$
with the intensity measure $\nu$ can be obtained as the superposition
of the stationary Poisson point process of compact convex sets with
marks in $(0,\infty)$ for
$k=0$, the Poisson process $\{(\R^d,r_i),i\geq1\}$ for $k=d$, and
the Poisson processes of random cylinders with $k$-dimensional
direction spaces and compact convex bases in $E^\perp$, $k=1,\dots,d-1$.

\begin{example}
  The term with $k=d$ in \eqref{eq:sum} corresponds to
  $\nu(\sC_L\times A)=\1_{L\neq\emptyset}\theta(A)$ for a L\'evy
  measure $\theta$ on $(0,\infty)$. Then $\Phi(K)=\xi$ for all
  non-empty $K\in\Kd$, where $\xi$ is the infinitely divisible random
  variable with the L\'evy measure $\theta$.

  If $\mu=\delta_1$, then the term corresponding to $k=0$ results in
  the L\'evy measure
  \begin{displaymath}
    \nu(\sC_L\times A)=c\E V_d(Y_{0}+L)\delta_1(A)
  \end{displaymath}
  for $c=G_{0,1}(\{0\})$.
  The corresponding ID valuation $\Phi$ is given by 
  \begin{displaymath}
    \Phi(K)=\sum_{i} \1_{(Y_i+x_i)\cap K\neq\emptyset},
  \end{displaymath}
  where $\{(Y_i,x_i),i\geq1\}$ is the Poisson point process on
  $\sC\times\R^d$ with the intensity being $c$ times the product of the
  distribution of a random compact convex set $Y_0$ (called the
  typical grain) 
  and the Lebesgue measure. In other words,
  $\Phi(K)$ is the number of sets from the germ-grain model which
  intersect $K$, see \cite[Section~4.3]{schn:weil}.  The case of all
  $Y_i=M$ with $s(M)=0$ appears in Example~\ref{ex:M+x}.
\end{example}

In the stationary isotropic case we get the following result. Denote
\begin{displaymath}
  c_{i}^{k}=\frac{\Gamma((k+1)/2)}{\Gamma((i+1)/2)}
  \quad \text{and}\; c_{i,j}^{k,m}=c_{i}^{k}c_{j}^{m}.
\end{displaymath}

\begin{theorem}
  \label{thr:isotropic}
  If $\Phi$ is a $\sigma$-continuous non-negative stationary isotropic
  ID valuation with independent increments, then the corresponding
  L\'evy measure on $\sC\times(0,\infty)$ is given by
  \begin{equation}
    \label{eq:sum-isotropic}
    \nu(\sC_L\times A)
    =\sum_{k=0}^{d} \int_A \int_{G(d,k)}
    \E(V_{d-k}(Y_{E,r}+\Pi_{E^\perp}L))\gamma_k(dE) a_{k,r}\mu(dr)
  \end{equation}
  for each compact set $L$ in $\R^d$ and Borel $A\subset(0,\infty)$,
  where $\gamma_k$ is the normalised Haar measure on $G(d,k)$,
  $Y_{E,r}$ is a random compact convex set in $E^\perp$ such that
  $Y_{RE,r}$ has the same distribution as $RY_{E,r}$ for each
  deterministic rotation $R$ and for almost every relevant $(E,r)$, and $a_{k,r}$ is a measurable non-negative
  function of $r>0$. If $K\in\Kd$, then
  \begin{equation}
    \label{eq:isotropic-K}
    \nu(\sC_K\times A)=\sum_{k=0}^{d} \sum_{j=0}^{d-k}
    c_{0,d}^{d-k-j,k+j} 
    V_{d-k-j}(K) \int_A \E V_j(Y_{E,r})a_{k,r}\mu(dr).
  \end{equation}
\end{theorem}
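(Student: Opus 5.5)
Since a stationary isotropic $\Phi$ is in particular stationary, we start from Theorem~\ref{thr:ma}. This gives the representation \eqref{eq:sum} with kernels $G_{k,r}$ and random sets $Y_{E,r}$. Isotropy of $\Phi$ is equivalent to rotation invariance of its L\'evy measure, and hence, by the uniqueness of $\nu$ discussed in the remark on uniqueness, to invariance of $\nu$ under $(F,r)\mapsto(RF,r)$ for all $R\in SO(d)$.

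\emph{Step 1: isotropise the kernels.} We go back to the intermediate measures $\Theta_k$ on $\sY_k\times(0,\infty)$ from the proof of Theorem~\ref{thr:ma}. These are constructed canonically from $\nu$: the Steiner point is rotation equivariant, and the normalisation by $B_{E^\perp}$ is rotation invariant. Consequently each $\Theta_k$ is invariant under $(E,Y,r)\mapsto(RE,RY,r)$. Disintegrate $\Theta_k$ first with respect to $r$, giving $H_{k,r}$, and then with respect to $E$. Then $H_{k,r}$ is rotation invariant for $\mu$-a.e.\ $r$; here one uses a countable dense set of rotations and the essential uniqueness of disintegrations. Its $E$-marginal $G_{k,r}$ is therefore a finite rotation-invariant measure on $G(d,k)$. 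Since $SO(d)$ acts transitively on $G(d,k)$, uniqueness of the invariant measure gives $G_{k,r}=a_{k,r}\gamma_k$ with $a_{k,r}=G_{k,r}(G(d,k))$, which is measurable in $r$. Invariance of $H_{k,r}$ under the joint action then forces the conditional law $Q_k(r,RE,\cdot)$ to be the image of $Q_k(r,E,\cdot)$ under $R$ for $\gamma_k$-a.e.\ $E$. This yields \eqref{eq:sum-isotropic} and the stated equivariance $Y_{RE,r}\overset{d}{=}RY_{E,r}$ for almost every $(E,r)$.

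\emph{Step 2: the formula for convex $K$.} Fix $k$ and $r$, and for $E\in G(d,k)$ apply the Steiner-type mixed-volume expansion inside $E^\perp$:
\begin{displaymath}
  V_{d-k}(Y+\Pi_{E^\perp}K)=\sum_{j=0}^{d-k}\binom{d-k}{j}V(Y[j],\Pi_{E^\perp}K[d-k-j])
\end{displaymath}
(mixed volumes in $E^\perp$). We then integrate over $E$ with respect to $\gamma_k$, using the equivariance $Y_{E,r}\overset{d}{=}R_EY_{E_0,r}$. This amounts to averaging over rotations $R$ of $V_{d-k}(RY_0+\Pi_{E_0^\perp}R^{-1}K)$, where $Y_0\subset E_0^\perp$ is a fixed set. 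By the rotational principal kinematic / Crofton-type formulas for projections, the rotation average of the mixed volume of $Y$ with the projection of $K$ factorises into $V_j(Y)\,V_{d-k-j}(K)$ times a constant. The constant comes from combining the Cauchy--Kubota formula (intrinsic volumes of a projected body averaged over $G(d,d-k)$) with the rotation mean-value formula for mixed volumes within $E^\perp$. Tracking the flag coefficients gives $c_{0,d}^{d-k-j,k+j}$, possibly after absorbing normalisation into $a_{k,r}$ or $\mu$. Finally, integrate against $a_{k,r}\mu(dr)$ over $A$; Fubini is justified by \eqref{eq:sum-levy} after restricting to $A\subset[n^{-1},\infty)$ and letting $n\to\infty$.

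\emph{Main obstacle.} The main obstacle is Step 2, specifically obtaining the precise constants $c_{0,d}^{d-k-j,k+j}$. The approach I would try first is to avoid full mixed-volume machinery. Because $\E V_{d-k}(Y_{E,r}+\Pi_{E^\perp}K)$ averaged over $E$ is a rigid-motion-invariant continuous valuation in $K$, Hadwiger's theorem makes it a combination of $V_i(K)$. The coefficients can then be identified by testing on balls $K=sB$, where everything is explicit via Steiner's formula in $E^\perp$ and the ratios $\kappa_{m}$ reduce to the stated Gamma quotients. The secondary subtlety is the measure-theoretic point in Step 1: that invariance of $\Theta_k$ passes to a.e.\ invariance of the disintegration kernels.
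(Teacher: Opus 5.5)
Your proposal is correct. Step~1 is the paper's argument spelled out. The paper simply invokes Matheron's treatment of invariant measures on cylinders to get $G_{k,r}=a_{k,r}\gamma_k$ and the equivariance of $Y_{E,r}$, after modification on a $\mu$-null set. You instead derive both from rotation invariance of the measures $\Theta_k$ and uniqueness of disintegrations, which is fine.

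The first version of your Step~2 is also essentially the paper's route. The paper averages \eqref{eq:sum} over rotations of $K$, which is legitimate because $\nu$ is rotation invariant. It then applies the rotation sum formula inside $E^\perp$ to obtain $\sum_j c_{0,d-k}^{j,d-k-j}\E V_j(Y_{E,r})V_{d-k-j}(\Pi_{E^\perp}K)$. Finally it uses Kubota's formula $\int_{G(d,k)}V_{d-k-j}(\Pi_{E^\perp}K)\gamma_k(dE)=c_{d,j}^{d-k,k+j}V_{d-k-j}(K)$. The product of the two constants telescopes to $c_{0,d}^{d-k-j,k+j}$.

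Your preferred alternative, Hadwiger plus testing on balls, is genuinely different and is the cheaper way to pin down the constants. For $K=sB$, the Steiner formula in $E^\perp$ gives $\sum_j\kappa_{d-k-j}s^{d-k-j}\E V_j(Y_{E,r})$. Comparing with $\sum_m\alpha_m s^mV_m(B)$, where $V_m(B)=\binom{d}{m}\kappa_d/\kappa_{d-m}$, yields $\alpha_m=\frac{\kappa_m\kappa_{d-m}}{\binom{d}{m}\kappa_d}\,\E V_{d-k-m}(Y_{E,r})$. By the duplication formula, $\kappa_m\kappa_{d-m}/(\binom{d}{m}\kappa_d)$ is exactly $c_{0,d}^{m,d-m}$.

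So your hedge about absorbing normalisations into $a_{k,r}$ or $\mu$ is unnecessary. That option would not be available anyway, since $a_{k,r}$ and $\mu$ already appear in \eqref{eq:sum-isotropic} and are shared by all $j$.

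The Hadwiger route does require three checks:
\begin{itemize}
\item $K\mapsto V_{d-k}(Y+\Pi_{E^\perp}K)$ is a continuous valuation. This holds because projection and Minkowski addition of a fixed body preserve unions and intersections of admissible pairs.
\item There is an integrable majorant, which comes from \eqref{eq:sum-levy}.
\item The $E$-average is rotation invariant. This can be bypassed by applying Hadwiger to the rotation average in $K$, as the paper does, since balls are rotation invariant.
\end{itemize}

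One small slip: in your rotation average the integrand should be $V_{d-k}(Y_0+\Pi_{E_0^\perp}R^{-1}K)$, not $V_{d-k}(RY_0+\Pi_{E_0^\perp}R^{-1}K)$. Both summands must lie in the same $(d-k)$-dimensional subspace.
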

\begin{proof}
  The measure $\nu$ is invariant under rotations and
  translations. Referring to \cite[Section~5.4]{ma75} or examining the
  proof of Theorem~5.4.1 ibid.\ shows that $G_{k,r}=a_{k,r}\gamma_k$,
  where $a_{k,r}\ge0$ and $\gamma_k$ is the Haar measure on $G(d,k)$,
  and that the random compact convex set $Y_{E,r}$ for $E\in G(d,k)$
  is such that $Y_{RE,r}$ has the same distribution as $RY_{E,r}$ for
  each deterministic rotation $R$. For this, we can redefine the
  kernels on a $\mu$-null set. This yields
  \eqref{eq:sum-isotropic}.

  If $L=K$ is convex, averaging the right-hand side of \eqref{eq:sum}
  with the normalised Haar measure on the family of all rotations
  applied to $K$ and referring to Theorem~6.1.1 from \cite{schn:weil}
  yield that
  \begin{displaymath}
    \nu(\sC_K\times A)
    =\sum_{k=0}^{d} \int_A \int_{G(d,k)} \sum_{j=0}^{d-k}
    c_{0,d-k}^{j,d-k-j} \E V_j(Y_{E,r}) V_{d-k-j}(\Pi_{E^\perp}K)
    \gamma_k(dE) a_{k,r} \mu(dr).
  \end{displaymath}
  By \cite[Theorem~6.2.2]{schn:weil},
  \begin{displaymath}
    \int_{G(d,k)} V_{d-k-j}(\Pi_{E^\perp}K) \gamma_k(dE)
    =c_{d,j}^{d-k,k+j} V_{d-k-j}(K). \qedhere
  \end{displaymath}
\end{proof}

\begin{proposition}
  Assume that $\Phi$ is a stationary $\sigma$-continuous monotone ID
  valuation with independent increments and vanishing deterministic
  part which is also almost surely
  simple, that is, $\Phi(K)=0$ a.s.\ for all $K\in\Kd$ of dimension at
  most $d-1$. Then $\Phi$ is a stationary pure-jump completely random
  measure.
\end{proposition}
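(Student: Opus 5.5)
By Theorem~\ref{thr:2} and Theorem~\ref{thr:ma}, the L\'evy measure $\nu$ of $\Phi$ lives on $\bigcup_{k=0}^d\sZ_k\times(0,\infty)$. It is given by \eqref{eq:sum}, with random bases $Y_{E,r}$, kernels $G_{k,r}$ and the measure $\mu$. The plan is to use simplicity to show that $\nu$ charges only singletons, i.e.\ pairs $(\{x\},r)$. Once this is done, Theorem~\ref{thr:indep-values}, or more directly the argument at the end of its proof, represents $\Phi$ in distribution as $\eta(K)=\sum_i r_i\1_{x_i\in K}$, a pure-jump completely random measure. Stationarity of $\Phi$ makes the intensity of $\{(x_i,r_i)\}$ translation invariant in $x$. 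By uniqueness of Haar measure it is then the product of Lebesgue measure with a L\'evy measure on $(0,\infty)$, exactly as in Example~\ref{ex:poisson-r-m}.

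The key step is to translate simplicity into a condition on $\nu$. For $K\in\Kd$ of dimension at most $d-1$, we have $\Phi(K)=0$ a.s. Since $\nu(\sC_K\times\cdot)$ is the L\'evy measure of the infinitely divisible variable $\Phi(K)$, this forces $\nu(\sC_K\times(0,\infty))=0$. Take $K$ to be a segment, or a small $(d-1)$-dimensional disc, in an arbitrary position. Then $\sC_K$ contains every closed convex set hitting $K$, so no atom of the Poisson process may hit any lower-dimensional convex body.

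Next I would check this against each term of \eqref{eq:sum}.
\begin{itemize}
\item For $k\ge1$, a cylinder $E+C$ always meets a segment parallel to a line in $E$ that passes through a point of $C$. More simply, take $K=\{x\}$ for a point $x\in E+C$. Integrating over translations, $\nu(\sC_{\{x\}}\times A)=\int_A\int\E V_{d-k}(Y_{E,r})\,G_{k,r}(dE)\,\mu(dr)$.
\item Even a single point $K=\{x\}$ has dimension $0\le d-1$. Hence $\nu(\sC_{\{x\}}\times\cdot)=0$, which gives
\begin{displaymath}
  \int_A\int_{G(d,k)}\E V_{d-k}(Y_{E,r})\,G_{k,r}(dE)\,\mu(dr)=0
\end{displaymath}
for every $k$. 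For $k=d$ this term is $\1_{L\ne\emptyset}G_{d,r}(G(d,d))$, so it vanishes outright. For $0\le k<d$ it forces $Y_{E,r}$ to have zero $(d-k)$-volume in $E^\perp$ for almost every relevant $(E,r)$. In particular, for $k=0$ the grains are a.s.\ lower-dimensional.
\end{itemize}

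The main obstacle is the remaining lower-dimensional grains, for example flat cylinders or segments. A single point rarely hits such a grain, so the point test is not enough. I would use a lower-dimensional $K$ transversal to the grain instead. Suppose $F=E+Y+x$ has dimension $m\ge1$ and $m<d$. Choose a $(d-m)$-dimensional disc $D$ transversal to the affine hull of $F$. Then $\nu(\sC_D\times\cdot)$ is a positive multiple of the $\nu$-mass of such cylinders, namely via a term like $V_{d-k}(Y+\Pi_{E^\perp}D)$, which is positive when $\dim(Y+\Pi_{E^\perp}D)=d-k$. Simplicity requires this mass to be zero. To make the argument uniform, I would integrate over a countable family of orientations of $D$ (rational directions), or use Crofton-type averaging, since an $m$-dimensional flat set has positive measure of hitting $(d-m)$-planes. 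This shows that the $\nu$-mass of sets of dimension $1,\dots,d$ is zero. What remains are zero-dimensional sets, i.e.\ singletons (only in the $k=0$ term with $Y$ a point), which completes the reduction described in the first paragraph.
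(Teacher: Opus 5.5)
Your proposal is correct and follows essentially the same route as the paper. Simplicity gives $\nu(\sC_L\times\cdot)=0$ for lower-dimensional test sets $L$; inserting these into \eqref{eq:sum} and using non-negativity of the summands removes every component except $k=0$ with singleton grains; translation invariance then identifies the intensity as $\ell\otimes\theta$. The only difference is the choice of test sets: the paper uses just the $d$ discs $L_j=B\cap e_j^\perp$ (for $k\geq1$ some $\Pi_{E^\perp}L_j$ is full-dimensional in $E^\perp$, and for $k=0$ any segment in the grain is transversal to some $e_j^\perp$), which replaces both your point test and your countable family of rational transversal discs and makes the uniformity step explicit.
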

\begin{proof}
  Let
  $L_j=B\cap e_j^\perp$, $j=1,\dots,d$, where $B$ is the unit
  Euclidean ball and $e_1,\dots,e_d$ are the coordinate vectors. Each
  $L_j$ has dimension $d-1$, and hence the simplicity assumption
  yields $\Phi(L_j)=0$ a.s.  Consequently, the L\'evy measure of
  $\Phi(L_j)$ vanishes, and so $\nu(\sC_{L_j}\times A)=0$ 
  for every Borel $A\subset(0,\infty)$.
  Applying representation~\eqref{eq:sum} to $L_j$ which yields zero
  value and noticing that all summands are non-negative show that
  \begin{equation}
    \label{eq:simple-zero}
    \int_A\int_{G(d,k)}
    \E V_{d-k}(Y_{E,r}+\Pi_{E^\perp}L_j)
    G_{k,r}(dE)\mu(dr)=0
  \end{equation}
  for every $j=1,\dots,d$ and $k=0,\dots,d$ and for all Borel
  $A\subset(0,\infty)$.

  We first consider $1\leq k\leq d-1$. For every $E\in G(d,k)$, there
  exists $j\in\{1,\dots,d\}$ such that $e_j\notin E^\perp$, since
  $E^\perp$ is a proper linear subspace of $\R^d$. For such $j$, the
  orthogonal projection of $e_j^\perp$ onto $E^\perp$ is
  surjective. Hence, the dimension of $\Pi_{E^\perp}L_j$ is $d-k$, and
  $\Pi_{E^\perp}L_j$ has non-empty interior in $E^\perp$.  Since
  $Y_{E,r}$ is non-empty,
  \begin{displaymath}
    V_{d-k}(Y_{E,r}+\Pi_{E^\perp}L_j)>0.
  \end{displaymath}
  It follows that, for every $E\in G(d,k)$,
  \begin{displaymath}
    \sum_{j=1}^d
    \E V_{d-k}(Y_{E,r}+\Pi_{E^\perp}L_j)>0.
  \end{displaymath}
  Summing \eqref{eq:simple-zero} over $j=1,\dots,d$ therefore shows
  that $G_{k,r}=0$ as a measure on $G(d,k)$ for $\mu$-almost every
  $r$. Thus, all components with $k=1,\dots,d-1$ vanish.

  If $k=d$, then $E^\perp=\{0\}$ and, since $Y_{E,r}$ is non-empty,
  $V_0(Y_{E,r}+\Pi_{E^\perp}L_j)=1$.  Hence \eqref{eq:simple-zero}
  implies that $G_{d,r}=0$ for $\mu$-almost every $r$, consequently,
  the 
  component with $k=d$ also vanishes.
  
  It remains to consider $k=0$. In this case $E=\{0\}$ and
  $E^\perp=\R^d$. Since $G(d,0)=\{\{0\}\}$, write
  $g_0(r)=G_{0,r}(\{\{0\}\})$. We claim that $Y_{0,r}$ is a singleton almost surely
  for almost every $r$ with respect to the corresponding intensity
  measure. Suppose otherwise. Then, with positive probability, $Y_{0,r}$
  contains a non-degenerate segment with direction $u\neq0$. There
  exists $j\in\{1,\dots,d\}$ such that $\langle u,e_j\rangle\neq0$. Since
  $L_j$ has non-empty relative interior in the hyperplane $e_j^\perp$,
  the Minkowski sum of this segment and $L_j$ has non-empty interior in
  $\R^d$. Therefore, $V_d(Y_{0,r}+L_j)>0$.  Consequently, the sum of
  $V_d(Y_{0,r}+L_j)$ over $j$ is strictly positive 
  whenever $Y_{0,r}$ is non-singleton. Summing
  \eqref{eq:simple-zero} over $j$ for $k=0$ shows that this event has
  zero probability for almost every relevant $r$. Hence $Y_{0,r}$ is
  a singleton a.s.\ for $g_0(r)\mu(dr)$-almost every $r$.
  In the centred parametrisation used in the proof of
  Theorem~\ref{thr:ma}, we have $s(Y_{0,r})=0$. A singleton whose
  Steiner point is the origin is necessarily $\{0\}$. Thus $Y_{0,r}=\{0\}$
  almost surely for almost every relevant $r$.
  
  Hence only the $k=0$ component remains, and its cylinders are
  precisely the translated singletons $\{x\}$, $x\in\R^d$. Therefore,
  the L\'evy measure $\nu$ is concentrated on 
  $\{(\{x\},r): x\in\R^d,r>0\}$. 
  Translation invariance implies that the image of $\nu$ under
  $(\{x\},r)\mapsto(x,r)$ is the product of the Lebesgue measure $\ell$ on
  $\R^d$ and a measure $\theta$ on $(0,\infty)$ satisfying
  \begin{equation}
    \label{eq:theta-levy}
    \int_{(0,\infty)}\min(1,r)\theta(dr)<\infty.
  \end{equation}
  Consequently, $\Phi$ has the same distribution as the restriction to
  $K\in\Kd$ of a stationary pure-jump completely random measure $\eta$
  given by \eqref{eq:points} with
  a Poisson point process $\{(x_i,r_i),i\geq1\}$  on
  $\R^d\times(0,\infty)$ of intensity
  $\ell\otimes\theta$.
\end{proof}

\section{Stationary stable ID valuations and McMullen's decomposition}
\label{sec:stationary-stable-id}

Below we specialise the representation of stationary ID valuations to
those which are $\alpha$-stable in argument and show that the
stability exponent $\alpha$ necessarily belongs to
$\{1,\dots,d\}$. The crucial fact is that the L\'evy measure $\nu$ is
supported by an affine Grassmannian $A(d,k)$ for $k=d-\alpha$. Recall
that $A(d,k)$ is the family of all $k$-dimensional affine subspaces of
$\R^d$.

\begin{theorem}
  \label{thr:stationary-homogeneous}
  Let $\Phi$ be a non-trivial stationary $\sigma$-continuous non-negative ID
  valuation with independent increments and vanishing deterministic
  part. If $\Phi$ is $\alpha$-stable in argument, then 
  $\alpha\in\{1,\dots,d\}$, and its L\'evy measure has representation
  \begin{equation}
    \label{eq:stable-stationary}
    \nu(\sC_L\times A)=\int_A
    \int_{G(d,d-\alpha)} V_{\alpha}(\Pi_{E^\perp}L)
    G_{d-\alpha,r}(dE)\mu(dr)
  \end{equation}
  for all compact sets $L$ in $\R^d$ and Borel
  $A\subset(0,\infty)$. Furthermore, in the corresponding Poisson
  point process $\{(F_i,r_i), i\geq1\}$ all sets $F_i$ are affine subspaces
  of dimension $d-\alpha$. Conversely, for each
  $\alpha\in\{1,\dots,d\}$, representation
  \eqref{eq:stable-stationary} implies that $\Phi$ is
  $\alpha$-stable in argument.  
\end{theorem}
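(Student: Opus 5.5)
Combine the Grassmannian representation of Theorem~\ref{thr:ma} with the scaling criterion of Proposition~\ref{prop:stable}(i). By Theorem~\ref{thr:2} and Theorem~\ref{thr:ma}, $\nu$ is concentrated on $\bigcup_{k=0}^d\sZ_k\times(0,\infty)$. Moreover, in the centred parametrisation \eqref{eq:centered-cylinder} each $\nu$ restricted to $\sZ_k\times(0,\infty)$ becomes $dx\,\Theta_k(d(E,Y,r))$ with $x\in E^\perp$. Stability in argument means $\nu(t^{1/\alpha}\sM\times A)=t\nu(\sM\times A)$ for rational $t$, and hence for all $t>0$ by monotone approximation; indeed both sides are determined by hitting functionals of compact sets and are monotone in $t$ along nested families. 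Since dilation preserves each $\sZ_k$, the identity holds separately for each $k$.

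Next I transport the scaling to the parametrisation. Under $F\mapsto sF$ we have $(E,Y,x)\mapsto(E,sY,sx)$, and Lebesgue measure on $E^\perp$ scales by $s^{d-k}$. With $s=t^{1/\alpha}$ this gives the scaling law for $\Theta_k$: its image under $Y\mapsto sY$ equals $s^{\alpha-(d-k)}\Theta_k$. The key point is that $\Theta_k$ lives on sets $Y$ with $s(Y)=0$, and the family of such $Y$ contains the fixed point $\{0\}$. Split $\Theta_k$ into its part on $\{Y=\{0\}\}$ and its part on non-degenerate $Y$.

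On $\{Y=\{0\}\}$ the scaling acts trivially, so $s^{\alpha-(d-k)}=1$ for all $s$ unless this part vanishes. Hence the singleton part survives only for $k=d-\alpha$; in particular $\alpha$ must be an integer in $\{0,\dots,d\}$, and $\alpha>0$ by Proposition~\ref{prop:stable}(i). On the non-degenerate part, use the local finiteness \eqref{eq:nu-levy} with $L=B$ to show it vanishes. Writing $\rho(Y)$ for the circumradius, the measure of $\{\rho\in[a,sa)\}$ is invariant up to a factor $s^{\alpha-d+k}$ under dilation. This forces a power-law density in $\rho$, namely $\Theta_k(\rho\in dρ)\propto \rho^{-(\alpha-d+k)-1}d\rho$ on fibres of shapes. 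Now $V_{d-k}(Y+\Pi_{E^\perp}B)\ge c\,\max(1,\rho^{d-k})$, since $Y$ contains a segment of length comparable to $\rho$ and $d-k\ge1$ when $Y$ is non-degenerate. Integrating gives a divergence either at $\rho\to0$ or at $\rho\to\infty$ for every exponent, contradicting \eqref{eq:nu-levy} after restricting $r\ge\eps$. I expect this step to be the main obstacle. A cleaner route would be to note that any $\sigma$-finite measure invariant up to a constant factor under all dilations of a free $\R_+$-action is of the form $\rho^{-\beta-1}d\rho\otimes(\text{shape measure})$, and then check both tails against the growth bound.

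Therefore only $Y=\{0\}$ with $k=d-\alpha$ remains, so every $F_i=E_i+x_i$ is a $(d-\alpha)$-dimensional affine subspace, and \eqref{eq:sum} reduces to \eqref{eq:stable-stationary} because $V_\alpha(\{0\}+\Pi_{E^\perp}L)=V_\alpha(\Pi_{E^\perp}L)$; the case $k=d$ is excluded since $\alpha\ge1$. For the converse, $\Pi_{E^\perp}(sL)=s\Pi_{E^\perp}L$ and $V_\alpha$ on the $\alpha$-dimensional space $E^\perp$ is $\alpha$-homogeneous. Thus $\nu(\sC_{sL}\times A)=s^\alpha\nu(\sC_L\times A)$ for all compact $L$, which yields \eqref{eq:nu-homogeneous} on the determining class of hitting sets. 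Proposition~\ref{prop:stable}(i) then gives $\alpha$-stability in argument.
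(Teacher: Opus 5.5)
Your argument is correct in substance but takes a different route from the paper. The paper never looks inside the parametrised measure. It takes $L=B$ in \eqref{eq:sum} and expands $V_{d-k}(Y_{E,r}+sB_{E^\perp})$ by the Steiner formula. It then compares the resulting polynomial in $s=t^{1/\alpha}$, whose coefficients are the non-negative quantities $\sum_{k}\int_A\int\E V_{d-j-k}(Y_{E,r})\,G_{k,r}(dE)\,\mu(dr)$, with $s^\alpha\nu(\sC_B\times A)$ on the dense set of rational $t$. This forces $\alpha\in\{1,\dots,d\}$. The $V_0$-terms then kill $G_{k,r}$ for $k\ne d-\alpha$, and the $V_1$-term with $j=\alpha-1$ forces $Y_{E,r}$ to be a singleton, hence $\{0\}$.

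You instead transport the dilation to the fibre decomposition $\widetilde\nu_k=dx\otimes\Theta_k$. This is legitimate because $\Theta_k$ is defined explicitly from $\widetilde\nu_k$. It needs neither the Steiner formula nor intrinsic volumes, and it shows structurally why flats appear: $\{0\}$ is the only dilation-fixed centred shape. The paper's route is shorter and uses only the hitting functional on one test set; yours works at the level of the measure and makes the fixed/non-fixed dichotomy explicit.

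The step you flag as the main obstacle can be avoided with a fact already proved in Theorem~\ref{thr:ma}: for $A\subset[\eps,\infty)$, $\kappa_{d-k}\Theta_k(\sY_k\times A)\le\nu(\sC_B\times A)<\infty$.
\begin{itemize}
\item With $S_s\colon Y\mapsto sY$ one gets $(S_s)_*\Theta_k(\cdot\times A)=s^{d-k-\alpha}\Theta_k(\cdot\times A)$. Your exponent is the reciprocal, which is harmless.
\item Comparing total masses gives $\Theta_k(\cdot\times A)=0$ unless $k=d-\alpha$. Non-triviality then makes $\alpha$ an integer.
\item For $k=d-\alpha$, a finite $S_s$-invariant measure (take $s>1$) gives equal mass to the disjoint shells $\{s^n\le\rho(Y)<s^{n+1}\}$, $n\in\mathbb Z$. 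Hence it charges no non-degenerate $Y$.
\end{itemize}
This uses a single $t\ne1$, so neither the power-law density nor your extension to all $t>0$ is needed. That extension is in any case not justified as stated, since $s\mapsto\nu(s\sM\times A)$ need not be monotone for general $\sM$.

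Your bound $V_{d-k}(Y+\Pi_{E^\perp}B)\ge c\,\rho^{d-k}$ is also false for flat $Y$ when $d-k\ge2$, since a segment gives only linear growth. This does not hurt you, because only the constant bound $\kappa_{d-k}$ is actually used.
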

\begin{proof}
  By Proposition~\ref{prop:stable}, $\Phi$ is $\alpha$-stable in
  argument if and only if
  $\nu(\sC_{t^{1/\alpha}L}\times A)=t\nu(\sC_L\times A)$ for all
  compact $L$, rational $t>0$,
  and Borel $A\subset(0,\infty)$. Letting $A$ be a subset of
  $[\eps,\infty)$ with $\eps>0$, $L$ be
  the unit Euclidean ball $B$, and using the Steiner formula,
  \eqref{eq:sum} yields that
  \begin{equation}
    \label{eq:stability}
    \nu(\sC_{t^{1/\alpha}B}\times A)=
    \sum_{j=0}^d \kappa_j t^{j/\alpha}\sum_{k=0}^{d-j}
    \int_A\int_{G(d,k)} \E V_{d-j-k}(Y_{E,r})G_{k,r}(dE)\mu(dr),
  \end{equation}
  where $\kappa_j$ is the volume of the unit ball in $\R^j$.
  Again, the Steiner formula yields that
  \begin{displaymath}
    t\nu(\sC_B\times A)=
    t\sum_{j=0}^d \kappa_j \sum_{k=0}^{d-j}
    \int_A\int_{G(d,k)} \E V_{d-j-k}(Y_{E,r})G_{k,r}(dE)\mu(dr).
  \end{displaymath}
  Thus, only the term with $j=\alpha$ appears on the right-hand side
  of \eqref{eq:stability} and thus $\alpha\in\{1,\dots,d\}$. This
  holds for all Borel $A\subset(0,\infty)$ by monotone exhaustion.
  Therefore, 
  \begin{equation}
    \label{eq:j-not-alpha}
    \sum_{k=0}^{d-j}
    \int_A\int_{G(d,k)} \E V_{d-j-k}(Y_{E,r})G_{k,r}(dE)\mu(dr)=0,
    \quad j\neq \alpha.
  \end{equation}
  All terms of \eqref{eq:j-not-alpha} with $j>\alpha$ (if $\alpha<d$)
  vanish, and so $G_{k,r}=0$ as a measure on $G(d,k)$ for $\mu$-almost
  every $r$ and $k\leq d-\alpha-1$. Looking at the terms with
  $j<\alpha$ and $k=d-j$ shows that $G_{k,r}$ vanishes for
  $\mu$-almost all $r$ and $k\geq d-\alpha+1$. Leaving in
  \eqref{eq:j-not-alpha} only the terms with $k=d-\alpha$ yields that,
  for $j=\alpha-1$,
  \begin{displaymath}
    \int_A\int_{G(d,d-\alpha)} \E V_{1}(Y_{E,r})G_{d-\alpha,r}(dE)\mu(dr)=0,
  \end{displaymath}
  so that $Y_{E,r}$ is a singleton for
  $G_{d-\alpha,r}(dE)\mu(dr)$-almost every $(E,r)$. Then
  \eqref{eq:stable-stationary} holds and all sets $F_i$ are affine
  subspaces of dimension $d-\alpha$.  In particular, if $\alpha=d$,
  then it is possible to combine $G_{0,r}(\{\{0\}\})$ with $\mu(dr)$
  to obtain new measure $\tilde{\mu}$,
  so that $\nu(\sC_L\times A)=\tilde{\mu}(A)V_d(L)$ for all compact
  $L$.
  
  The converse easily follows from \eqref{eq:stable-stationary} by
  writing it for $t^{1/\alpha}L$. 
\end{proof}

Below we address situations in which $\Phi$ is decomposable into a sum
of valuations which are stable in argument. The following result is a
random analogue of McMullen's decomposition theorem for
translation-invariant valuations, see \cite[Theorem~2.0.2]{MR3820854}
and \cite[Theorem~3.1]{MR3289841}.

Two random valuations $\Phi$ and $\Psi$ are said to be \emph{weakly
  equivalent} if the distribution of $\Phi(K)$ coincides with that of
$\Psi(K)$ for all $K\in\Kd$, that is, the one-dimensional
distributions of $\Phi$ and $\Psi$ are identical. If $\Phi$ and $\Psi$
are square-integrable and have independent increments, then their weak
equivalence implies that their covariances agree on pairs of
admissible sets.

\begin{theorem}
  \label{thr:mcm}
  Let $\Phi$ be a $\sigma$-continuous non-negative stationary
  isotropic ID valuation with independent increments and vanishing
  deterministic part. Then there exist a non-negative infinitely
  divisible random variable $\xi$ and independent non-negative
  stationary isotropic ID valuations $\Psi_m$, $m=1,\dots,d$, with
  independent increments, such that
  \begin{displaymath}
    \Psi(K)
    =
    \xi\1_{K\neq\emptyset}
    +
    \sum_{m=1}^d\Psi_m(K),
    \qquad K\in\Kd,
  \end{displaymath}
  is weakly equivalent to $\Phi$. Moreover, $\Psi_m$ is
  $m$-stable in argument for every $m=1,\dots,d$.
  
  More precisely, there exist non-negative measures
  $b_0,\dots,b_d$ on $(0,\infty)$ satisfying \eqref{eq:theta-levy}
  such that
  \begin{equation}
    \label{eq:isotropic-mcm-Levy}
    \nu(\sC_K\times A)
    =
    \sum_{m=0}^d V_m(K)b_m(A),
    \qquad K\in\Kd,
  \end{equation}
  for every Borel $A\subset(0,\infty)$, where
  $V_0(K)=\1_{K\neq\emptyset}$.
\end{theorem}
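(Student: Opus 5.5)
Starting from formula \eqref{eq:isotropic-K} of Theorem~\ref{thr:isotropic}, I would regroup the double sum by the index $m=d-k-j$ of the intrinsic volume of $K$. For each $m\in\{0,\dots,d\}$ define the measure
\begin{displaymath}
  b_m(A)=\sum_{k=0}^{d-m} c_{0,d}^{m,d-m}\int_A \E V_{d-k-m}(Y_{E,r})\,a_{k,r}\,\mu(dr),
\end{displaymath}
where I use that $c_{0,d}^{d-k-j,k+j}$ depends only on $m$, and that by isotropy $\E V_j(Y_{E,r})$ does not depend on $E\in G(d,k)$. This gives \eqref{eq:isotropic-mcm-Levy} immediately. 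Each $b_m$ is a non-negative measure on $(0,\infty)$. To check \eqref{eq:theta-levy}, I would take $K=B$ and use $V_m(B)>0$ for all $m$: then $\int\min(1,r)\,b_m(dr)\le V_m(B)^{-1}\int\min(1,r)\,\nu(\sC_B\times dr)<\infty$ by \eqref{eq:nu-levy}.

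Next I would build the components. Let $\xi$ be infinitely divisible with L\'evy measure $b_0$; this is the $k=d$ cylinder $\R^d$, so it contributes $\xi\1_{K\neq\emptyset}$. For $m=1,\dots,d$, let $\Psi_m$ be the ID valuation of Theorem~\ref{thr:2} whose Poisson process consists of isotropic uniformly placed affine subspaces of dimension $d-m$ with marks. Concretely, its L\'evy measure is
\begin{displaymath}
  \nu_m(\sC_L\times A)=\int_A\int_{G(d,d-m)}V_m(\Pi_{E^\perp}L)\,\gamma_{d-m}(dE)\,\tilde b_m(dr),
\end{displaymath}
with $\tilde b_m$ a constant multiple of $b_m$. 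This is representation \eqref{eq:sum-isotropic} with $Y_{E,r}=\{0\}$. By Theorem~\ref{thr:stationary-homogeneous} (converse part), $\Psi_m$ is $m$-stable in argument. It is stationary and isotropic by the invariance of $\gamma_{d-m}$ and Lebesgue measure, and by Theorem~\ref{thr:2} it has independent increments. For convex $K$, the projection formula $\int_{G(d,d-m)}V_m(\Pi_{E^\perp}K)\gamma_{d-m}(dE)=c\,V_m(K)$ (Theorem~6.2.2 of \cite{schn:weil}) shows that the constant can be chosen so that $\nu_m(\sC_K\times A)=V_m(K)b_m(A)$. I take $\xi,\Psi_1,\dots,\Psi_d$ independent.

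Finally, weak equivalence. For fixed $K$, $\Phi(K)$ is a non-negative ID variable with zero drift and L\'evy measure $\nu(\sC_K\times\cdot)$, by Theorem~\ref{thr:2} and the uniqueness remark. Since $\Psi(K)$ is a sum of independent ID variables, its L\'evy measure is $\sum_m V_m(K)b_m$. The two coincide by \eqref{eq:isotropic-mcm-Levy}, so the Laplace transforms agree and $\Phi(K)\overset{d}{=}\Psi(K)$. Only one-dimensional laws are matched, because isotropic averaging was applied to $\sC_K$ for a single $K$ and not to intersections $\sC_{K_1}\cap\sC_{K_2}$.

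The step I expect to be the main obstacle is the bookkeeping of the constants $c^{k,m}_{i,j}$ in the regrouping, which must confirm that the coefficient depends only on $m$. A second concern is that isotropy lets me replace $\E V_j(Y_{E,r})$ by an $E$-independent quantity for almost every $(E,r)$. If the constants do not align, I would instead define $b_m(A):=\nu(\sC_K\times A)$-coefficients directly, by polynomial identification in $t$ of $\nu(\sC_{tB}\times A)=\sum_m t^mV_m(B)b_m(A)$ via the Steiner formula. This gives the $b_m$ without tracking constants, and \eqref{eq:isotropic-K} then shows that the $V_m(K)$-expansion holds for all $K$.
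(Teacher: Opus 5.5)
Your proposal is correct and follows essentially the same route as the paper: regroup \eqref{eq:isotropic-K} by $m=d-k-j$ to get the measures $b_m$, check \eqref{eq:theta-levy} with $K=B$, realise each $\Psi_m$ as a marked Poisson process of isotropic $(d-m)$-flats, and compare one-dimensional L\'evy measures. The only differences are cosmetic: your Grassmannian description combined with Kubota's projection formula is just the disintegration of the paper's Crofton-normalised measure $\rho_m$ on $A(d,d-m)$, and your appeal to the converse of Theorem~\ref{thr:stationary-homogeneous} replaces the paper's direct use of the degree-$m$ homogeneity of $\rho_m$ in Proposition~\ref{prop:stable}. Your worry about the constants is unnecessary, since the constant can sit inside the sum over $k$ in any case. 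The one detail to add is that $\xi$ must be taken with vanishing deterministic part, so that the Laplace transforms of $\Phi(K)$ and $\Psi(K)$ agree.
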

\begin{proof}
  Recall the representation \eqref{eq:isotropic-K} for the L\'evy
  measure obtained in Theorem~\ref{thr:isotropic}, where, by isotropy,
  $\E V_\ell(Y_{E,r})$ does not depend on the particular
  $E\in G(d,k)$.  Put $m=d-k-j$.  Then 
  \eqref{eq:isotropic-K} can be written as 
  \begin{displaymath}
    \nu(\sC_K\times A)  =  \sum_{m=0}^d V_m(K)b_m(A),
  \end{displaymath}
  where
  \begin{equation}
    \label{eq:b-j}
    b_m(A) =  c_{0,d}^{m,d-m}
    \sum_{k=0}^{d-m}
    \int_A  \E V_{d-k-m}(Y_{E,r})a_{k,r}\mu(dr), \quad m=0,\dots,d,
  \end{equation}
  are non-negative measures on $(0,\infty)$.
  Taking $K$ to be the unit Euclidean ball, and using \eqref{eq:nu-levy}
  yields \eqref{eq:theta-levy}. 

  We now construct independent random valuations corresponding to
  these measures.  Let $\xi$ be a non-negative infinitely divisible
  random variable with vanishing deterministic part and L\'evy measure
  $b_0$. Then $K\mapsto \xi V_0(K) = \xi\1_{K\neq\emptyset}$ 
  gives the degree-zero component.
  
  For $m=1,\dots,d$, let $\rho_m$ be the motion-invariant measure on
  the affine Grassmannian $A(d,d-m)$, normalised so that the Crofton
  formula takes the form
  \begin{equation}
    \label{eq:rho-j-crofton}
    \rho_m(\{F\in A(d,d-m):F\cap K\neq\emptyset\})
    = V_m(K),
    \qquad K\in\Kd.
  \end{equation}
  Let $N_m$, $m=1,\dots,d$, be independent Poisson point processes on
  $A(d,d-m)\times(0,\infty)$ with intensity measures $\rho_m(dF)b_m(dr)$,
  and assume that they are also independent of $\xi$. Define
  \begin{equation}
    \label{eq:Psi-j-crofton}
    \Psi_m(K)
    =
    \int r\1_{F\cap K\neq\emptyset}\,
    N_m(d(F,r)).
  \end{equation}
  The integral is well defined, since
  \begin{align*}
    \int_{A(d,d-m)\times(0,\infty)}
    \min(1,r)\1_{F\cap K\neq\emptyset}
    \rho_m(dF)b_m(dr)
    = V_m(K) \int_{(0,\infty)}\min(1,r)b_m(dr) <\infty.
  \end{align*}
  By construction, $\Psi_m$ is non-negative, $\sigma$-continuous,
  stationary and isotropic, and it has independent increments.

  We next verify stability in argument. The motion-invariant measure
  $\rho_m$ is homogeneous of degree $m$ under dilations of affine
  subspaces. Indeed, writing an affine $(d-m)$-flat as $E+x$, with
  $E\in G(d,d-m)$ and $x\in E^\perp$, its translation component is
  $m$-dimensional Lebesgue measure, hence,
  $\rho_m(t^{1/m}\sM)=t\rho_m(\sM)$ for all $t>0$.  Consequently, for
  every $t>0$,
  \begin{displaymath}
    (\rho_m\otimes b_m)(t^{1/m}\sM\times A)
    = t(\rho_m\otimes b_m)(\sM\times A).
  \end{displaymath}
  By Proposition~\ref{prop:stable}, $\Psi_m$ is $m$-stable in
  argument.  Finally, set
  \begin{displaymath}
    \Psi(K)
    =
    \xi\1_{K\neq\emptyset}
    +
    \sum_{m=1}^d\Psi_m(K).
  \end{displaymath}
  Since the components are independent, the L\'evy measure of the
  non-negative infinitely divisible random variable $\Psi(K)$ is the
  sum of their one-dimensional L\'evy measures. By
  \eqref{eq:rho-j-crofton}, 
  the L\'evy measure of $\Psi(K)$ is
  \begin{displaymath}
    \sum_{m=0}^d V_m(K)b_m(dr)
    = \nu(\sC_K\times dr).
  \end{displaymath}
  Both $\Phi(K)$ and $\Psi(K)$ have
  vanishing deterministic part,
  hence $\Phi$ and $\Psi$ are weakly equivalent.
\end{proof}

The weak equivalence in Theorem~\ref{thr:mcm} cannot in general be
strengthened to equality of finite-dimensional distributions, as the
following example shows.

\begin{remark}
  Imposing only stationarity is not sufficient for the validity of
  Theorem~\ref{thr:mcm}. Let $\Phi(K)=\eta(K+A)$ where $\eta$ is a
  Poisson point process and $A\in\Kd$ is not centrally
  symmetric. Then, for some $K\in\Kd$, $\Phi(K)$ does not have the
  same distribution as $\Phi(-K)$.
  However, each random valuation which is
  stable in argument has representation given by
  Theorem~\ref{thr:stationary-homogeneous}, and this representation is
  invariant in distribution with respect to the central symmetry
  transform. Therefore, while the valuation $\Phi$ is infinitely
  divisible, stationary, monotone, and has independent increments, it
  cannot be represented as the sum of stable random valuations.
\end{remark}

\begin{example}
  \label{ex:Poisson-ball}
  Let $\eta$ be the stationary Poisson random measure on $\R^d$, that is,
  \begin{displaymath}
    \eta(K)=\sum_i \1_{x_i\in K},
  \end{displaymath}
  where $\{x_i,i\geq1\}$ is a homogeneous Poisson point process on
  $\R^d$. Let
  \begin{displaymath}
    \Phi(K)=\eta(K+B),
  \end{displaymath}
  where $B$ is the unit Euclidean ball. This is a stationary
  isotropic ID valuation with independent increments. The corresponding
  measure $\nu$ is supported by the family $\{B+x:
  x\in\R^d\}$. Below we construct a different valuation which shares
  with $\Phi$ all one-dimensional distributions. 

  For $m=0,\dots,d$, let $\rho_m$ be the measure on $A(d,d-m)$
  normalised to satisfy \eqref{eq:rho-j-crofton}. Let
  $b_m=\kappa_{d-m}\delta_1$, and consider the Poisson process
  $\{E_{m,i},i\ge1\}$ on $A(d,d-m)$ of intensity
  $\kappa_{d-m}\rho_m$. Define
  \begin{displaymath}
    \Psi_m(K)=\sum_i \1_{E_{m,i}\cap K\neq\emptyset},\quad K\in\Kd,
  \end{displaymath}
  which is clearly a $\sigma$-continuous stationary ID valuation with
  independent increments which is $m$-stable in argument for
  $m=1,\dots,d$; and, for $m=0$, $\Psi_0(K)$ is a Poisson random
  variable, which does not depend on $K\neq\emptyset$.  Therefore,
  $\Psi_m(K)$ is a Poisson random variable of mean
  $\kappa_{d-m} V_m(K)$, independent for different $m$, and so
  $\Psi(K)=\Psi_0(K)+\cdots+\Psi_d(K)$ is Poisson with mean
  \begin{displaymath}
    \E\Psi(K)=\sum_{m=0}^d \kappa_{d-m}V_m(K)=V_d(K+B).
  \end{displaymath}
  Hence, random variables $\Psi(K)$ and $\Phi(K)$ share the same
  distribution for all $K\in\Kd$. The finite-dimensional distributions
  of these two random valuations are clearly different, e.g.,
  $\Phi(\{x\})$ and $\Phi(\{y\})$ are independent if the singletons
  $x$ and $y$ are at distance at least $2$, while $\Psi(\{x\})$ and
  $\Psi(\{y\})$ are equal.  Indeed, both values equal the random
  multiplicity of the unique element $\R^d$ of $A(d,d)$, which is precisely
  the valuation $\Psi_0$.
\end{example}

\begin{example}[Poisson perimeter]
  For $d=2$, let $\nu=\mu_1\otimes\delta_1$ be the product of the
  motion-invariant measure $\mu_1$ on the affine Grassmannian $A(2,1)$
  normalised so that
  $\mu_1(\{L\in A(2,1): L\cap B\neq\emptyset\})=2\pi$ and the Dirac
  measure $\delta_1$. Then the corresponding ID valuation $\Phi(K)$
  with vanishing deterministic part equals the number of lines from
  the Poisson line process which hit $K$. The expectation of $\Phi$ is
  the perimeter of $K\in\Ktwo$.
\end{example}

\appendix

\section{Valuations with symmetric-independent increments}
\label{sec:conv-indep-incr}

We now characterise ID valuations satisfying a weaker
independence condition. We say that a random valuation $\Phi$ has
\emph{symmetric-independent increments} if the random variables
$\Phi(K)-\Phi(K\cap L)$ and $\Phi(L)-\Phi(K\cap L)$ are independent
for all $K,L\in\Kd$ such that $K\cup L$ is convex. 

\begin{lemma}
  \label{lemma:incr}
  If $\Phi$ is a random valuation with independent increments, then
  $\Phi$ has symmetric-independent increments. 
\end{lemma}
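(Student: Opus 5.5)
The plan is to reduce the statement to a single nested chain of three sets, using the additivity of $\Phi$ to rewrite one of the two increments. Fix $K,L\in\Kd$ such that $K\cup L$ is convex. Then $K$ and $L$ are admissible, and the defining property of a random valuation gives
\begin{displaymath}
  \Phi(L)-\Phi(K\cap L)=\Phi(K\cup L)-\Phi(K)\quad\text{a.s.}
\end{displaymath}
Independence depends only on joint distributions, and modifying a random variable on a null event does not change joint distributions. So it suffices to show that $\Phi(K)-\Phi(K\cap L)$ and $\Phi(K\cup L)-\Phi(K)$ are independent.

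Next I would consider the nested sequence of compact convex sets
\begin{displaymath}
  L_1=K\cup L\supset L_2=K\supset L_3=K\cap L,\qquad L_4=\emptyset .
\end{displaymath}
All three are in $\Kd$: $K\cup L$ is convex by assumption, and $K\cap L$ is convex, or empty, which the paper's convention includes in $\Kd$. By the independent increments assumption with $n=3$, the increments $\Phi(L_1)-\Phi(L_2)$, $\Phi(L_2)-\Phi(L_3)$ and $\Phi(L_3)$ are jointly independent. In particular the first two are independent, and these are exactly $\Phi(K\cup L)-\Phi(K)$ and $\Phi(K)-\Phi(K\cap L)$. Combined with the a.s.\ identity above, this yields the claim.

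The only point needing care is degenerate cases. If $K\cap L=\emptyset$, the chain has a repeated empty set. The definition of independent increments as stated requires only nested (not strictly nested) sets, so the argument goes through unchanged. Alternatively one can use the chain $K\cup L\supset K\supset\emptyset$ with $n=2$, since $\Phi(\emptyset)=0$ a.s. Similarly, if $K=K\cup L$ or $K=K\cap L$, one of the increments vanishes a.s., and a constant is independent of anything. No infinite divisibility, monotonicity or separability is used, so I expect no real obstacle beyond this bookkeeping.
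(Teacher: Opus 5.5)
Your proof is correct and is essentially the paper's argument. The paper uses the chain $K\cup L\supset L\supset K\cap L$ and rewrites $\Phi(K\cup L)-\Phi(L)$ as $\Phi(K)-\Phi(K\cap L)$ via additivity; you use the chain through $K$ and rewrite the other increment, which is the same argument with $K$ and $L$ swapped.
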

\begin{proof}
  By assumption, $\Phi(K\cup L)-\Phi(L)$ and $\Phi(L)-\Phi(K\cap L)$
  are independent. It remains to notice that $\Phi(K\cup L)-\Phi(L)$
  is a.s.\ equal to $\Phi(K)-\Phi(K\cap L)$ by the additivity property
  of $\Phi$.
\end{proof}

A function $g:\Rd\to\R_+$ is said to be \emph{quasi-concave} if
$\{x:g(x)\geq t\}$ is convex for all $t\in\R_+$. For any $K\in\Kd$
denote
\begin{displaymath}
  g^{\vee}(K)=\sup\{g(x):x\in K\}, 
\end{displaymath}
and let $g^{\vee}(\emptyset)=0$. 

The function $g^\vee$ is a valuation. Indeed, let $K,L,K\cup L\in\Kd$
and assume that $g^\vee(K)\geq g^\vee(L)$. Then
$g^\vee(K\cup L)=g^\vee(K)$, while quasi-concavity of $g$ and
convexity of $K\cup L$ yield $g^\vee(K\cap L)=g^\vee(L)$. Hence
$g^\vee(K\cup L)+g^\vee(K\cap L)=g^\vee(K)+g^\vee(L)$.

\begin{theorem}
  \label{thr:1}
  A monotone $\sigma$-continuous ID valuation $\Phi$ has
  symmetric-independent increments if and only if the outer measure of
  its L\'evy measure $\Lambda$ is supported by valuations $\psi$ each
  of which is of the form
  \begin{displaymath}
    \psi(K)=g^\vee(K)
  \end{displaymath}
  for an upper semicontinuous quasi-concave function $g:\Rd\to\R_+$.
\end{theorem}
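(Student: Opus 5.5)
The plan is to translate symmetric-independence into a pointwise condition on the L\'evy measure, then identify the valuations that satisfy it. By Lemma~\ref{lemma:inf-div-indep}, applied to $\Phi(K)-\Phi(K\cap L)$ and $\Phi(L)-\Phi(K\cap L)$, the valuation $\Phi$ has symmetric-independent increments if and only if the $\Lambda$-outer measure is carried by valuations $\psi$ such that, for every admissible pair $K,L$,
\begin{equation*}
  \psi(K)=\psi(K\cap L)\quad\text{or}\quad \psi(L)=\psi(K\cap L).
\end{equation*}
To justify passing from ``for each fixed pair a.e.'' to ``a.e.\ for all pairs'', we first restrict to admissible pairs from the countable family $\sD$ of rational polytopes used in Lemma~\ref{lemma:separable}. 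That lemma also supplies approximating sequences $K_n\downarrow K$, $L_n\downarrow L$ with $K_n\cup L_n$ convex. We then pass to the limit using monotonicity and the $\sigma$-continuity of $\Lambda$-almost all $\psi$ from Lemma~\ref{lemma:psi-sigma}. The a.e.\ set of $\psi$ is thus common to all pairs. Combined with additivity, the condition says that $\psi(K\cup L)=\max(\psi(K),\psi(L))$ and $\psi(K\cap L)=\min(\psi(K),\psi(L))$ for admissible $K,L$.

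For sufficiency, note that $g^\vee$ satisfies exactly this max/min property, as shown before the theorem. So the $\Lambda$-outer measure is carried by the required set, and Lemma~\ref{lemma:inf-div-indep} gives independence. We also check that $g^\vee$ is monotone and $\sigma$-continuous. Monotonicity is clear. For $\sigma$-continuity, if $K_n\downarrow K$ and $g^\vee(K_n)\ge t$, choose maximisers $x_n\in K_n$, which exist by upper semicontinuity. A limit point $x$ lies in $K$ and satisfies $g(x)\ge t$.

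For necessity, given such a $\psi$ (monotone, $\sigma$-continuous, with the max/min property), define $g(x)=\psi(\{x\})$. The key steps, in order, are as follows.
\begin{enumerate}
\item[(a)] $g$ is quasi-concave. For $x,y$ with $g(x),g(y)\ge t$ and $z\in[x,y]$, split the segment at $z$ into $[x,z]$ and $[z,y]$, whose union is convex and whose intersection is $\{z\}$. Monotonicity gives $\psi([x,z])\ge t$ and $\psi([z,y])\ge t$, so the min property gives $g(z)=\min\ge t$.
\item[(b)] $g$ is upper semicontinuous. If $x_n\to x$ with $g(x_n)\ge t$, apply $\sigma$-continuity to $K_n=\conv(\{x\}\cup\{x_i:i\ge n\})\downarrow\{x\}$, as in the proof of Theorem~\ref{thr:2}.
\item[(c)] $\psi(K)=g^\vee(K)$. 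The inequality $\ge$ follows from monotonicity. For $\le$, bisect $K$ by a hyperplane into two convex pieces whose union is $K$. By the max property, one piece $K_1$ has $\psi(K_1)=\psi(K)$. Iterating with hyperplanes in rotating coordinate directions makes the diameters tend to zero, giving a decreasing sequence shrinking to a point $x\in K$. By $\sigma$-continuity, $g(x)=\psi(K)$. The argument mirrors the one in Theorem~\ref{thr:indep-values}.
\end{enumerate}

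The main obstacle is the first step: turning the countably many a.e.\ conditions into one holding for all admissible pairs. The approximating pairs must remain admissible, and the limits $\psi(K_n\cap L_n)\to\psi(K\cap L)$ must be justified; this requires $K_n\cap L_n\downarrow K\cap L$, which holds for decreasing sequences. Once this is in place, steps (a)--(c) are routine.
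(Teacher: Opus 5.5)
Your proposal is correct and follows essentially the paper's route. You reduce via Lemma~\ref{lemma:inf-div-indep} to the alternative $\psi(K)=\psi(K\cap L)$ or $\psi(L)=\psi(K\cap L)$, derive the max/min property, and set $g(x)=\psi(\{x\})$; you then use hyperplane bisection with $\sigma$-continuity for $\psi=g^\vee$, the hulls $\conv(\{x\}\cup\{x_i:i\ge n\})$ for upper semicontinuity, a split of $[x,y]$ at $z$ for quasi-concavity, and the min property of $g^\vee$ for the converse. Your countable reduction over admissible pairs of rational polytopes makes rigorous a step the paper takes for granted. For the converse, note that the paragraph before the theorem only asserts $g^\vee(K\cap L)=\min(g^\vee(K),g^\vee(L))$; the missing justification is the short segment argument in the paper's proof: a segment $[x,y]$ with $g(x),g(y)>s$, $x\in K$, $y\in L$, must meet $K\cap L$.
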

\begin{proof}
  By Lemma~\ref{lemma:inf-div-indep}, $\Lambda$ is supported by $\psi$
  such that
  \begin{equation}
    \label{eq:psi-cap}
    \psi(K)=\psi(K\cap L)\quad \text{or}\quad \psi(L)=\psi(K\cap L)
  \end{equation}
  for all $K,L\in\Kd$ with $K\cup L\in\Kd$. Furthermore, all $\psi$
  from the support of $\Lambda$ are monotone and
  $\sigma$-continuous. Taking into account monotonicity of $\psi$, we
  have that
  \begin{displaymath}
    \psi(K\cup L)=\psi(K)+\psi(L)-\psi(K\cap L)=\max(\psi(K),\psi(L)).
  \end{displaymath}
  Define $g(x)=\psi(\{x\})$, $x\in\R^d$. Consider any hyperplane $H$
  which intersects the relative interior of $K$ and so splits $K$ into
  two convex sets $K_1$ and $K_2$ whose union is $K$. Assume that
  $\psi(K_1)=\psi(K)$. Splitting $K_1$ further, we obtain a sequence
  $K_n$, $n\geq1$, which shrinks to a point $x$ and such that
  $\psi(K_n)=\psi(K)$ for all $n$. By $\sigma$-continuity,
  $g(x)=\psi(\{x\})=\psi(K)$ for some $x\in K$. Since
  $\psi(K)\geq \psi(\{y\})=g(y)$ for all $y\in K$ by monotonicity, we
  obtain that $g(x)=g^\vee(K)$.

  If $x_n\to x$, pass to a subsequence along which $g(x_n)\to
  \limsup_{n\to\infty} g(x_n)$. Let $K_n$ be the closed convex hull of $\{x\}$
  and $\{x_j,j\geq n\}$. Then $\psi(K_n)=g^\vee(K_n)\geq \sup_{j\geq
    n} g(x_j)$. By $\sigma$-continuity,
  \begin{displaymath}
    g(x)=\psi(\{x\})\geq \limsup_{n\to\infty} g(x_n),
  \end{displaymath}
  so $g$ is upper semicontinuous.
  
  Consider the segment $[x,y]$ and a point $z$ in its relative
  interior. Then
  \begin{displaymath}
    g^\vee([x,y])+g(z)=g^\vee([x,z])+g^\vee([z,y]).
  \end{displaymath}
  Without loss of generality assume that
  $g^\vee([x,y])=g^\vee([z,y])$. Then $g(z)=g^\vee([x,z])\geq
  g(x)$. Thus, $g(z)\geq \min(g(x),g(y))$, and so $g$ is
  quasi-concave.

  In the other direction, if $g$ is quasi-concave, then
  \eqref{eq:psi-cap} holds, so that $\Phi$ has symmetric-independent
  increments by Lemma~\ref{lemma:inf-div-indep}. Indeed, assume that
  $g^{\vee}(K\cap L)=t$. Then 
  $g^{\vee}(K)\geq t$ and $g^{\vee}(L)\geq t$. If both inequalities
  are strict, choose $s$ such that
  $t<s<\min\{g^\vee(K),g^\vee(L)\}$. Then $g(x)>s$ for some $x\in K$
  and $g(y)>s$ for some $y\in L$. By quasi-concavity $g(z)>s$ for all
  $z$ from the segment $[x,y]$. However, this segment intersects
  $K\cap L$, and we arrive at a contradiction. 
\end{proof}



\section{Moments of random valuations}
\label{sec:moments-rand-valu}

Let $\Phi$ be an integrable random valuation, that is, $\Phi(K)$ is
integrable for all $K\in\Kd$. Then its expectation is a deterministic
valuation $\phi=\E\Phi$, which is translation invariant if $\Phi$ is
stationary and is monotone if $\Phi$ is monotone. The dominated
convergence theorem implies that $\phi$ is $\sigma$-continuous if
$\Phi$ is monotone and $\sigma$-continuous.
Assume that $\Phi$ is square-integrable and define its
covariance function by
\begin{displaymath}
  C(K,L)=\E\big[\Phi(K)\Phi(L)\big]-\E \Phi(K)\E \Phi(L),
  \quad K,L\in\Kd.
\end{displaymath}
Since 
\begin{displaymath}
  0=\E\big[\Phi(M)\big(\Phi(K\cup L)+\Phi(K\cap L)
  -\Phi(K)-\Phi(L)\big)\big],
\end{displaymath}
for $K,L,M\in\Kd$ with $K\cup L\in\Kd$, we have that
\begin{equation}
  \label{eq:KL}
  C(M,K\cup L)+C(M,K\cap L)-C(M,K)-C(M,L)=0.
\end{equation}
Thus, the covariance function is additive in each of its arguments and
so is a \emph{bivaluation}.  Denote
\begin{displaymath}
  \sigma^2(K)=C(K,K)=\E\Phi(K)^2-(\E\Phi(K))^2,\quad K\in\Kd. 
\end{displaymath}

\begin{proposition}
  \label{prop:c-sigma}
  Let $\Phi:\Kd\to\R$ be a square-integrable stochastic process such
  that $K\mapsto \E\Phi(K)$ is a deterministic valuation.
  Then $\Phi$ is a random valuation with 
  uncorrelated increments if and only if 
  \begin{equation}
    \label{eq:c-sigma}
    C(K,L)=\sigma^2(K\cap L),\quad K,L\in\Kd,\; K\cup L\in\Kd, 
  \end{equation}
  and $\sigma^2(\cdot)$ is a monotone valuation.
\end{proposition}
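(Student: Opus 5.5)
Both directions reduce to the bilinearity of covariance together with the additivity of $\Phi$. I read "uncorrelated increments" in analogy with independent increments: for every nested chain $L_1\supset\cdots\supset L_n$ with $L_{n+1}=\emptyset$, the increments $\Phi(L_i)-\Phi(L_{i+1})$ are pairwise uncorrelated. As in Proposition~\ref{prop:indep-increments}, it suffices to treat two increments along a chain $K_1\subset K_2\subset K_3\subset K_4$. I also read "random valuation" in the statement as requiring additivity almost surely on admissible pairs.

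\emph{Necessity.} Suppose $\Phi$ is a random valuation with uncorrelated increments. I will show the two claims in turn.

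\begin{enumerate}
\item[(a)] \emph{Nested pairs.} For $L\subset K$, the increments $\Phi(K)-\Phi(L)$ and $\Phi(L)-\Phi(\emptyset)=\Phi(L)$ are uncorrelated. Hence $C(K,L)=C(L,L)=\sigma^2(L)$.
\item[(b)] \emph{Admissible pairs.} For admissible $K,L$, use additivity to write $\Phi(K)=\Phi(K\cup L)+\Phi(K\cap L)-\Phi(L)$ a.s. Expanding by bilinearity gives
\begin{displaymath}
  C(K,L)=C(K\cup L,L)+C(K\cap L,L)-\sigma^2(L).
\end{displaymath}
Since $K\cap L\subset L\subset K\cup L$, step (a) gives $C(K\cup L,L)=\sigma^2(L)$ and $C(K\cap L,L)=\sigma^2(K\cap L)$. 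Substituting yields \eqref{eq:c-sigma}.
\item[(c)] \emph{$\sigma^2$ is a valuation.} Apply the bivaluation identity \eqref{eq:KL} in the form
\begin{displaymath}
  \sigma^2(K\cup L)=C(K\cup L,K)+C(K\cup L,L)-C(K\cup L,K\cap L),
\end{displaymath}
which uses $M=K\cup L$. All pairs on the right are nested, so by (a) the right-hand side equals $\sigma^2(K)+\sigma^2(L)-\sigma^2(K\cap L)$. This is additivity of $\sigma^2$.
\item[(d)] \emph{$\sigma^2$ is monotone.} For $L\subset K$, write $\Phi(K)=\Phi(L)+(\Phi(K)-\Phi(L))$, a sum of two uncorrelated terms. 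Then
\begin{displaymath}
  \sigma^2(K)=\sigma^2(L)+\operatorname{Var}\bigl(\Phi(K)-\Phi(L)\bigr)\ge\sigma^2(L).
\end{displaymath}
\end{enumerate}

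\emph{Sufficiency, additivity.} Assume \eqref{eq:c-sigma} holds and $\sigma^2$ is a monotone valuation. For admissible $K,L$, set $D=\Phi(K\cup L)+\Phi(K\cap L)-\Phi(K)-\Phi(L)$. Its mean vanishes because $\E\Phi$ is a valuation. It therefore suffices to show $\operatorname{Var} D=0$, which gives $D=0$ a.s.

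Expand $\operatorname{Var} D$ into ten covariance terms. Every pair occurring is either nested or the admissible pair $(K,L)$, so \eqref{eq:c-sigma} applies to each and turns every covariance into $\sigma^2$ of an intersection. The result is
\begin{displaymath}
  \operatorname{Var} D=\sigma^2(K\cup L)+\sigma^2(K\cap L)+2\sigma^2(K\cap L)-2\sigma^2(K)-2\sigma^2(L)-2\sigma^2(K\cap L)+\sigma^2(K)+\sigma^2(L)+2\sigma^2(K\cap L).
\end{displaymath}
Here I should double-check that the cross term $C(K\cup L,K\cap L)$ gives $\sigma^2(K\cap L)$; it does, since that pair is nested. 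Collecting terms, this equals
\begin{displaymath}
  \sigma^2(K\cup L)+\sigma^2(K\cap L)-\sigma^2(K)-\sigma^2(L)=0,
\end{displaymath}
by additivity of $\sigma^2$. Also $\sigma^2(\emptyset)=C(\emptyset,\emptyset)=0$, so $\Phi(\emptyset)$ is a.s.\ equal to the constant $\E\Phi(\emptyset)=0$. Thus $\Phi$ is a random valuation.

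\emph{Sufficiency, uncorrelated increments.} For $K_1\subset K_2\subset K_3\subset K_4$, expand
\begin{displaymath}
  \operatorname{Cov}\bigl(\Phi(K_2)-\Phi(K_1),\,\Phi(K_4)-\Phi(K_3)\bigr)
\end{displaymath}
into four covariances. All pairs are nested, so by \eqref{eq:c-sigma} the covariance is
\begin{displaymath}
  \sigma^2(K_2)-\sigma^2(K_2)-\sigma^2(K_1)+\sigma^2(K_1)=0.
\end{displaymath}
The monotonicity hypothesis is not needed for this computation, since it is automatic once $C(K,L)=\sigma^2(L)$ holds for $L\subset K$. Its role in the statement is only as part of the characterisation.

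I expect the main obstacle to be the bookkeeping in the variance expansion of $D$, together with making sure that every covariance pair appearing there is either nested or admissible, so that \eqref{eq:c-sigma} may be applied to it.
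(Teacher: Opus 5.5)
Your proposal is correct and follows essentially the same route as the paper. For necessity it uses the covariance of nested pairs, the bivaluation identity with $M=K\cup L$, and the variance of an increment. For sufficiency it shows $\mathrm{Var}\,D=0$ and expands the four-term covariance of increments.

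There is one bookkeeping slip. Your displayed expansion of $\mathrm{Var}\,D$ includes only one of the two cross terms $-2C(K\cap L,K)=-2\sigma^2(K\cap L)$ and $-2C(K\cap L,L)=-2\sigma^2(K\cap L)$. As written, the coefficient of $\sigma^2(K\cap L)$ therefore comes out as $3$ rather than $1$. With both terms included, the collected expression you state is exactly right.
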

\begin{proof}
  Without loss of generality assume that $\Phi$ is centred, that is,
  $\E\Phi(K)=0$ for all $K$.

  \textsl{Necessity.} If $K,L\in\Kd$ and $K\cup L\in\Kd$, then
  Lemma~\ref{lemma:incr} (adjusted for uncorrelated increments)
  implies that $\Phi(K)-\Phi(K\cap L)$ and
  $\Phi(L)-\Phi(K\cap L)$ are uncorrelated. Noticing that
  $\Phi(K)-\Phi(K\cap L)$ and $\Phi(K\cap L)$ are also uncorrelated,
  we have
  \begin{align*}
    C(K,L)&=\E\big[\Phi(K)\Phi(L)\big]\\
    &=\E\big[(\Phi(K)-\Phi(K\cap L)+\Phi(K\cap L))
    (\Phi(L)-\Phi(K\cap L)+\Phi(K\cap L))\big]\\
    &=\sigma^2(K\cap L),\quad K,L,K\cup L\in\Kd. 
  \end{align*}
  Then \eqref{eq:KL} with $M=K\cup L$ becomes
  \begin{displaymath}
    \sigma^2(K\cup L)+\sigma^2(K\cap L)-\sigma^2(K)-\sigma^2(L)=0,
  \end{displaymath}
  meaning that $\sigma^2:\Kd\to\R_+$ is a non-negative valuation.
  For $L\subset K$,
  \begin{displaymath}
    \sigma^2(K)-\sigma^2(L)
    =\E[(\Phi(K)-\Phi(L)+\Phi(L))^2]-\E\Phi(L)^2
    =\E(\Phi(K)-\Phi(L))^2\geq 0,
  \end{displaymath}
  so that $\sigma^2$ is monotone. 

  \textsl{Sufficiency.} Let $K,L$ be an admissible pair. Then
  \begin{align*}
    \mathrm{Var}(&\Phi(K\cup L)
    +\Phi(K\cap L)-\Phi(K)-\Phi(L))\\
    &=\sigma^2(K\cup L)+\sigma^2(K\cap L)+\sigma^2(K)+\sigma^2(L)
    +2\sigma^2(K\cap L)-2\sigma^2(K)-2\sigma^2(L)-2\sigma^2(K\cap L)\\
    &=\sigma^2(K\cup L)+\sigma^2(K\cap L)-\sigma^2(K)-\sigma^2(L)=0.
  \end{align*}
  For every admissible pait $K,L$, the valuation identity holds almost
  surely.  Let $N\subset M\subset L\subset K$
  be convex compact sets. Then
  \begin{displaymath}
    \E[(\Phi(K)-\Phi(L))(\Phi(M)-\Phi(N))]
    =\sigma^2(M)-\sigma^2(N)
    -\sigma^2(M)+\sigma^2(N)=0. \qedhere
  \end{displaymath}
\end{proof}

\begin{corollary}
  \label{cor:cov-admissible}
  If two square-integrable valuations with independent increments
  share the same one-dimensional distributions, then their covariance
  functions coincide for all admissible pairs $K,L\in\Kd$.
\end{corollary}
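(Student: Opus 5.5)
The plan is to reduce the claim to Proposition~\ref{prop:c-sigma}. Let $\Phi$ and $\Psi$ be square-integrable random valuations with independent increments whose one-dimensional distributions coincide. First, equality of one-dimensional distributions gives $\E\Phi(K)=\E\Psi(K)$ and $\E\Phi(K)^2=\E\Psi(K)^2$ for every $K\in\Kd$. Hence the variance functions agree:
\begin{displaymath}
  \sigma_\Phi^2(K)=\sigma_\Psi^2(K),\quad K\in\Kd.
\end{displaymath}

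Second, independent increments imply uncorrelated increments, since square-integrability makes the increments along a nested chain pairwise uncorrelated. Moreover, $K\mapsto\E\Phi(K)$ is a deterministic valuation: take expectations in the a.s.\ additivity identity for each admissible pair, which is legitimate by integrability. The same holds for $\Psi$. So the hypotheses of Proposition~\ref{prop:c-sigma} hold for both processes.

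By that proposition, for all admissible $K,L$ (that is, $K\cup L\in\Kd$),
\begin{displaymath}
  C_\Phi(K,L)=\sigma_\Phi^2(K\cap L)=\sigma_\Psi^2(K\cap L)=C_\Psi(K,L).
\end{displaymath}
Here $K\cap L\in\Kd$ (possibly empty, with $\sigma^2(\emptyset)=0$ because $\Phi(\emptyset)=0$ a.s.). This is the claim.

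The only point that needs care is the necessity direction of Proposition~\ref{prop:c-sigma}. It uses that $\Phi(K)-\Phi(K\cap L)$ is uncorrelated both with $\Phi(L)-\Phi(K\cap L)$ and with $\Phi(K\cap L)$. Both facts follow from independent increments applied to the nested chains $K\cup L\supset L\supset K\cap L\supset\emptyset$ and $K\supset K\cap L\supset\emptyset$. The first chain is combined with the a.s.\ identity $\Phi(K\cup L)-\Phi(L)=\Phi(K)-\Phi(K\cap L)$, as in Lemma~\ref{lemma:incr}. Beyond this, the corollary is immediate; no step is a genuine obstacle.
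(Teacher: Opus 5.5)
Your proposal is correct and is the intended argument. The paper gives no separate proof and treats the corollary as an immediate consequence of Proposition~\ref{prop:c-sigma}: on admissible pairs $C(K,L)=\sigma^2(K\cap L)$, and $\sigma^2$ is determined by the one-dimensional distributions. You also check the proposition's hypotheses, which the paper leaves implicit: the mean is a valuation, independence implies uncorrelated increments, and the chain $K\cup L\supset L\supset K\cap L\supset\emptyset$ gives all the needed uncorrelatedness.
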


The values of $C(K,L)$ on convex bodies $K$ and $L$ whose union is not
convex are in general not functionals of $K\cap L$. For instance,
assume that $C(K,L)=\sigma^2(K\cap L)$ for all $K,L\in\Kd$. Then, for
disjoint $K$ and $L$, we would have $C(K,L)=0$.

\section*{Acknowledgment}

AI was supported by the Swiss National Science Foundation, Grant
No.~229505. The authors acknowledge discussions with Tommaso
Vison\`a. 





\end{document}